\def\@url#1{{\tt\def~{\lower3.5pt\hbox{\char'176}}\def\_{\char'137}#1}}
\def\makeautorefname#1#2{\expandafter\def\csname#1autorefname\endcsname{#2}}
                   \let\c@lemma\c@theorem
\newtheorem{thm}{Theorem}[subsection]
\newtheorem{cor}{Corollary}[subsection]
\newtheorem{prop}{Proposition}[subsection]
\newtheorem{lem}{Lemma}[subsection]
\theoremstyle{definition}
\newtheorem{defn}{Definition}[subsection]
\newtheorem{exmp}{Example}[subsection]
\newtheorem{rem}{Remark}[subsection]
\let\c@lem=\c@thm
\let\c@cor=\c@thm
\let\c@prop=\c@thm
\let\c@lem=\c@thm
\let\c@defn=\c@thm
\let\c@exmp=\c@thm
\let\c@exmps=\c@thm
\let\c@rem=\c@thm
\let\c@warn=\c@thm
\let\c@claim=\c@thm
\let\c@quest=\c@thm
\numberwithin{equation}{subsection}
\newcommand{\Z}{\mathbb{Z}}
\newcommand{\F}{\mathbb{F}}
\newcommand{\gmM}{{}^{g}\!\m{M}}
\newcommand{\gmR}{{}^{g}\!\m{R}}
\DeclareSymbolFontAlphabet{\scr}{rsfs}
\def\quickop#1{\expandafter\newcommand\csname #1\endcsname{\operatorname{#1}}}
\DeclareMathOperator*{\holim}{holim}
\DeclareMathOperator{\THH}{THH}
\DeclareMathOperator{\HH}{HH}
\newcommand{\m}[1]{\underline{#1}}
\newcommand{\Om}{\Omega}
\renewcommand\Id{\textrm{Id}}
\definecolor{darkspringgreen}{rgb}{0.09, 0.45, 0.27}
\definecolor{darkterracotta}{rgb}{0.8, 0.31, 0.36}
	\definecolor{darkcoral}{rgb}{0.8, 0.36, 0.27}
	\definecolor{indiagreen}{rgb}{0.07, 0.53, 0.03}
	\definecolor{mountainmeadow}{rgb}{0.19, 0.73, 0.56}
	\definecolor{mountbattenpink}{rgb}{0.6, 0.48, 0.55}
	\definecolor{palatinatepurple}{rgb}{0.41, 0.16, 0.38}
	\definecolor{cinnamon}{rgb}{0.82, 0.41, 0.12}
	\definecolor{chocolate}{rgb}{0.82, 0.41, 0.12}
\newcommand{\sm}{\wedge}
\newcommand{\cA}{\mathcal{A}}
\newcommand{\abs}[1]{\lvert #1\rvert}
\newcommand{\sqr}{\square}
\newcommand{\Algcat}[1]{{#1}\textrm{-Alg}}
\newcommand{\scalestar}{{\scaleto{\star}{4pt}}}
\newcommand{\mhyphen}{\text{ -- }}
\title{Computational tools for twisted topological Hochschild homology of equivariant spectra}
\author[Adamyk]{Katharine Adamyk}
\address[Adamyk]{Department of Mathematics, Western University, London, ON, Canada}
\author[Gerhardt]{Teena Gerhardt}
\address[Gerhardt]{Department of Mathematics, Michigan State University, East Lansing, MI 48824 }
\author[Hess]{Kathryn Hess}
\address[Hess]{SV UPHESS BMI, \'Ecole Polytechnique F\'ed\'erale de Lausanne, 1015 Lausanne, Switzerland}
\author[Klang]{Inbar Klang}
\address[Klang]{Department of Mathematics, Columbia University, New York, NY 10027}
\author[Kong]{Hana Jia Kong}
\address[Kong]{Department of Mathematics, The University of Chicago, Chicago, IL 60637}
\begin{document}

\maketitle
\markboth{ADAMYK, GERHARDT, HESS, KLANG, AND KONG}{COMPUTATIONAL TOOLS FOR TWISTED THH OF EQUIVARIANT SPECTRA}

\begin{abstract}
Twisted topological Hochschild homology of  $C_n$-equivariant spectra was introduced by Angeltveit, Blumberg, Gerhardt, Hill, Lawson, and Mandell, building on the work of Hill, Hopkins, and Ravenel on norms in equivariant homotopy theory.  In this paper we introduce tools for computing twisted THH, which we apply to computations for Thom spectra, Eilenberg-MacLane spectra, and the real bordism spectrum $MU_{\mathbb{R}}$.  In particular, we construct an equivariant version of the Bökstedt spectral sequence, the formulation of which requires further development of the Hochschild homology of Green functors, first introduced by Blumberg, Gerhardt, Hill, and Lawson.
\end{abstract}

\section{Introduction}\label{sec:intro}

 The trace method approach to algebraic $K$-theory uses topological versions of classical constructions from homological algebra to approximate $K$-theory. In recent years this approach has been instrumental in many important algebraic $K$-theory calculations. Topological Hochschild homology (THH) plays a key role in trace methods. Indeed, understanding THH is essential for defining topological cyclic homology (see \cite{BHM} or \cite{NikolausScholze}), which often approximates algebraic $K$-theory quite closely. 
 
Building on the work of Hill, Hopkins, and Ravenel on norms in equivariant homotopy theory \cite{HHR}, the topological Hochschild homology of a ring spectrum $R$ can be viewed as the norm $N_e^{S^1}R$ (see \cite{AnBlGeHiLaMa} and \cite{BrunDundasStolz}). This viewpoint leads to several natural generalizations. In particular, for a $C_n$-equivariant ring spectrum $R$, one can consider the norm $N_{C_n}^{S^1}R$, which is the $C_n$-relative, or twisted, topological Hochschild homology of $R$, as defined in \cite{AnBlGeHiLaMa}. The norm $N_{C_n}^{S^1}R$ can be explicitly constructed  as a twisted cyclic bar construction. While the foundations for twisted THH of equivariant spectra are laid out in \cite{AnBlGeHiLaMa}, no computations have appeared in the literature for this new theory. The primary goal of this article is to develop computational tools for twisted topological Hochschild homology and to study key examples.

One of the foundational tools for computing ordinary topological Hochschild homology is the B\"okstedt spectral sequence. For a field $k$ and a ring spectrum $R$, this spectral sequence has the form
\[
E^2_{*,*} = \HH_*^k(H_*(R;k)) \Rightarrow H_*(\THH(R);k). 
\]
It is natural  to ask whether computations of relative topological Hochschild homology can be approached via an analogous spectral sequence. In this paper we construct an equivariant analogue of the B\"okstedt spectral sequence, converging to the equivariant homology of twisted THH. In the statement below, $\m{E}_{\scalestar}(R)$ denotes the $RO(G)$-graded commutative Green functor $ \m{\pi}_{\scalestar}(E \wedge R)$.

\begin{thm}
	Let $G \subset S^1$ be a finite subgroup and $g$ a generator of $G$. Let $R$ be a $G$-ring spectrum and $E$ a commutative $G$-ring spectrum such that $g$ acts trivially on $E$. If $\m{E}_\scalestar(R)$ is flat over $\m{E}_\scalestar$, then there is an equivariant B\"okstedt-type spectral sequence
	$$E_{s,\scalestar}^2=\m{\HH}_{s}^{\m{E}_\scalestar,G}(\m{E}_\scalestar(R))\Rightarrow \m{E}_{s+\scalestar}(i_G^*\THH_G(R))$$
	that converges strongly.
\end{thm}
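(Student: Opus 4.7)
The plan is to set up a skeletal filtration spectral sequence for the geometric realization model $\CycNR{R}$ of $\THH_G(R)$, smashed with $E$ and restricted to $G$. Since the $q$-simplices of $\CycN{R}$ are Hill--Hopkins--Ravenel-type norms of $R$ indexed by the subgroup $G_{q+1}\subset S^1$ containing $G$ with $[G_{q+1}:G]=q+1$, each level is naturally a $G$-ring spectrum. Filtering by simplicial degree and applying $\m{\pi}_\scalestar(E\sm i_G^*(-))$ produces a spectral sequence abutting to $\m{E}_{s+\scalestar}(i_G^*\THH_G(R))$, whose $E^1$-page is the Moore complex of the simplicial Mackey functor $q \mapsto \m{E}_\scalestar\bigl(i_G^*\CycN{R}_q\bigr)$.

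The next step is to identify the $E^1$- and $E^2$-pages. Using flatness of $\m{E}_\scalestar(R)$ over $\m{E}_\scalestar$, I would establish an equivariant K\"unneth isomorphism identifying $\m{E}_\scalestar$ of each simplicial level with an appropriate iterated norm tensor product of copies of $\m{E}_\scalestar(R)$ over $\m{E}_\scalestar$. Tracking the cyclic face and degeneracy maps through this identification, one verifies that the resulting simplicial Green functor coincides with the twisted cyclic bar construction of $\m{E}_\scalestar(R)$ that defines the Hochschild homology of Green functors developed earlier in the paper, extending the Blumberg--Gerhardt--Hill--Lawson framework. Passing to homology in the Moore complex then identifies $E^2_{s,\scalestar}$ with $\m{\HH}^{\m{E}_\scalestar,G}_s(\m{E}_\scalestar(R))$.

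The main obstacle is verifying the K\"unneth identification compatibly with the cyclic twist. The last face map $d_q$ in the twisted cyclic bar construction is essentially an ordinary face map precomposed with a $g$-action cycling a tensor factor around, so for the topological and algebraic twists to match through the K\"unneth isomorphism, the induced $g$-action on $\m{E}_\scalestar$-coefficients must be trivial---which is precisely the hypothesis imposed. Strong convergence then follows from a standard connectivity argument for the skeletal spectral sequence of a simplicial $G$-spectrum: the norms $N_G^{G_{q+1}}R$ become increasingly connective in every $RO(G)$-degree as $q$ grows, so only finitely many simplicial filtration levels contribute to any fixed total degree, making the filtration bounded in each degree and yielding strong convergence.
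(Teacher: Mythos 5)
Your overall skeleton — filter by skeleta of the twisted cyclic nerve, identify the $E^1$-page via a K\"unneth isomorphism using flatness, match the twisted last face map with the algebraic twist using the trivial-action hypothesis, and read off Hochschild homology as the $E^2$-page — is the same route the paper takes, and the observation about why the trivial $g$-action on $\m{E}_\scalestar$ is needed is exactly right in spirit. However there are two genuine errors in the details.

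First, your description of the simplicial levels is wrong and it corrupts the K\"unneth step. The $q$-simplices of the twisted cyclic bar construction are simply the smash powers $R^{\wedge(q+1)}$ with a diagonal $G$-action (and a $g$-twist in the last face map); they are not HHR norms $N_G^{G_{q+1}}R$, and they are not $G$-ring spectra for merely associative $R$. Consequently the K\"unneth identification you want is with the iterated box product $\m{E}_\scalestar(R)^{\Box_{\m{E}_\scalestar}(q+1)}$, \emph{not} with ``iterated norm tensor products'' — norms on Mackey functors play no role at this stage. The paper gets to this cleanly by first invoking the isomorphism of $G$-spectra $i_G^*\THH_G(R) \cong \THH(R;{}^gR)$, so that the object being filtered is the \emph{ordinary} cyclic nerve with coefficients in the twisted bimodule ${}^gR$; the twist is then pushed entirely into the coefficient module, and matching the two sides becomes the precise statement that the identity is an isomorphism of left modules ${}^g\m{E}_\scalestar(R)\cong\m{E}_\scalestar({}^gR)$ when $g$ acts trivially on $\m{E}_\scalestar$ (Lemma \ref{TrivialActionLemma}). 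Your phrasing gestures at this but never isolates the required lemma.

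Second, your convergence argument is incorrect. You claim the simplicial levels ``become increasingly connective in every $RO(G)$-degree as $q$ grows,'' but they do not: each level is $R^{\wedge(q+1)}$, and the connectivity of $R^{\wedge(q+1)}$ does not grow with $q$ (and no connectivity of $R$ is assumed in the theorem anyway). What \emph{does} grow is the connectivity of the filtration quotients $F_s/F_{s-1}\cong \Sigma^s\bigl(B_s/\sigma B_s\bigr)$ due to the suspension, but this is a statement about the skeletal filtration, not about the simplicial levels, and it is in any case not what the paper invokes. The paper's strong convergence is simply the general strong convergence of the skeletal-filtration spectral sequence of a proper simplicial spectrum, \cite[X.2.9]{ekmm}, which holds with no connectivity hypothesis because the filtration is exhaustive and concentrated in nonnegative filtration degree. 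You should cite that result rather than construct an ad hoc connectivity argument that does not go through as stated.
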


In the classical B\"okstedt spectral sequence, the $E^2$-term is ordinary Hochschild homology of a $k$-algebra. Here, however, the $E^2$-term is a type of Hochschild homology for Green functors. The basic construction of Hochschild homology for Green functors is due to Blumberg, Gerhardt, Hill, and Lawson  \cite{BlGeHiLa}. In the current paper we extend their work to define a theory of Hochschild homology for $RO(G)$-graded $\m{E}_{\scalestar}$-algebras, which is what appears in the $E^2$-term of the equivariant B\"okstedt spectral sequence above. This is a spectral sequence of Mackey functors; evaluating at $G/e$ recovers a version of the classical B\"okstedt spectral sequence, which computes $\THH$ with coefficients in the twisted bimodule ${}^g R$:
$$\HH^{E_*} _s (E_*(R); {}^g E_*(R) ) \Rightarrow E_{s+*} (\THH(R ; {}^g R)).$$

Using the equivariant B\"okstedt spectral sequence, we compute the $RO(C_2)$-graded equivariant homology of the $C_2$-relative THH of the real bordism spectrum $MU_{\mathbb{R}}$. 
\begin{thm}
The $RO(C_2)$-graded equivariant homology of $\THH_{C_2}(MU_{\mathbb{R}})$ is 
\begin{align*}
\m{H}^{C_2}_{\scalestar}(i^*_{C_2} \THH_{C_2}(MU_{\mathbb{R}}); \m{\mathbb{F}}_2)
& \cong H\m{\mathbb{F}}_{2\scalestar}[b_1, b_2, \ldots]  \otimes_{\mathbb{F}_2}  \Lambda_{\mathbb{F}_{2}}(z_1, z_2, \ldots)
\end{align*}
as an $H\m{\mathbb{F}}_{2\scalestar}$-module. Here $|b_i| = i\rho $ and $|z_i| = 1+i\rho,$    where $\rho$ is the regular representation of $C_2$. 
\end{thm}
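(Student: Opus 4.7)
The plan is to apply the equivariant Bökstedt spectral sequence just stated, with $G = C_2$, $R = MU_{\mathbb{R}}$, and $E = H\m{\mathbb{F}}_2$, and then to identify the $E^\infty$-page with the claimed answer.

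First I would identify the input. By the Hu--Kriz computation of the Real $\m{\mathbb{F}}_2$-homology of $MU_{\mathbb{R}}$, there is an isomorphism of $RO(C_2)$-graded Green $H\m{\mathbb{F}}_{2\scalestar}$-algebras
$$H\m{\mathbb{F}}_{2\scalestar}(MU_{\mathbb{R}}) \cong H\m{\mathbb{F}}_{2\scalestar}[b_1, b_2, \ldots], \qquad |b_i| = i\rho.$$
This is a free, hence flat, $H\m{\mathbb{F}}_{2\scalestar}$-module, so the hypotheses of the equivariant Bökstedt spectral sequence are satisfied (the generator $g$ of $C_2$ acts trivially on $H\m{\mathbb{F}}_{2\scalestar}$). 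Next I would compute the $E^2$-page using the Hochschild homology of Green algebras developed earlier in the paper. Following the classical template $\HH_*(k[x_1,x_2,\ldots]) \cong k[x_i]\otimes \Lambda(dx_i)$, I would build an equivariant Koszul-type resolution of $H\m{\mathbb{F}}_{2\scalestar}[b_i]$ as a bimodule over itself, to conclude
$$E^2_{s,\scalestar} \;\cong\; H\m{\mathbb{F}}_{2\scalestar}[b_1, b_2, \ldots]\;\Box_{H\m{\mathbb{F}}_{2\scalestar}}\;\Lambda_{H\m{\mathbb{F}}_{2\scalestar}}(z_1, z_2, \ldots),$$
with $z_i \in E^2_{1, i\rho}$, so $|z_i| = 1 + i\rho$.

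Then I would argue the spectral sequence collapses at $E^2$ for degree reasons. The polynomial generators $b_i$ live in Hochschild filtration $0$, from which no differential emanates. The exterior generators $z_i$ live in Hochschild filtration $1$, so a differential $d_r$ for $r \geq 2$ would land in Hochschild filtration $1 - r < 0$ and thus vanish. Since the $b_i$ and $z_i$ generate the $E^2$-page multiplicatively, the Leibniz rule for the multiplicative structure on the Bökstedt spectral sequence forces all higher differentials to vanish.

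Finally I would resolve the extension problems. Since we work with $\m{\mathbb{F}}_2$-coefficients and the $E^\infty$-page is free as an $H\m{\mathbb{F}}_{2\scalestar}$-module on the monomials in the $b_i$ and $z_i$, the module extensions are trivial, yielding the claimed isomorphism
$$\m{H}^{C_2}_{\scalestar}(i^*_{C_2}\THH_{C_2}(MU_{\mathbb{R}}); \m{\mathbb{F}}_2) \;\cong\; H\m{\mathbb{F}}_{2\scalestar}[b_1,b_2,\ldots]\;\Box_{H\m{\mathbb{F}}_{2\scalestar}}\;\Lambda_{H\m{\mathbb{F}}_{2\scalestar}}(z_1,z_2,\ldots).$$
The main obstacle will be the $E^2$-computation: running a Koszul argument over a Green functor, keeping track of both the $RO(C_2)$-grading and the Mackey structure, is more subtle than the classical case and requires careful use of flatness of $H\m{\mathbb{F}}_{2\scalestar}[b_i]$ together with the behavior of the box product $\Box_{H\m{\mathbb{F}}_{2\scalestar}}$ on polynomial and exterior Green algebras. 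The collapse and extension steps are essentially formal once the $E^2$-page is in hand.
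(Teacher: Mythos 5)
Your proposal follows essentially the same route as the paper: run the equivariant Bökstedt spectral sequence with $E = H\m{\mathbb{F}}_2$, identify the input via Hu--Kriz and Hill--Hopkins--Ravenel/Araki as a polynomial $H\m{\mathbb{F}}_{2\scalestar}$-algebra on classes $b_i$ in degree $i\rho$, compute the $E^2$-page via a Koszul-type $\m{\mathrm{Tor}}$ argument (the paper's Propositions \ref{tor} and \ref{prop:E2}), and then observe that multiplicativity plus the fact that all algebra generators lie in Hochschild filtration $\le 1$ forces collapse at $E^2$. The one place you go slightly beyond the paper is in explicitly flagging the module-extension problem; the paper leaves that step implicit, and your remark that freeness of the $E^\infty$-page over $H\m{\mathbb{F}}_{2\scalestar}$ handles it is the standard resolution.
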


This calculation requires understanding the algebraic structure in the equivariant B\"okstedt spectral sequence, which can be formulated as follows.

\begin{thm}
 If $R$ is endowed with the structure of a commutative $G$-ring spectrum, then the equivariant B\"okstedt spectral sequence inherits the structure of a spectral sequence of $RO(G)$-graded algebras over $\m E_\scalestar$.
\end{thm}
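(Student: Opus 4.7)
The plan is to run the standard argument that endows a Bökstedt-type spectral sequence with a multiplicative structure, adapted to the twisted, $RO(G)$-graded setting. Concretely, the equivariant Bökstedt spectral sequence of the theorem is constructed from the skeletal filtration of the twisted cyclic bar construction $\CycN{R}$, whose geometric realization is $\THH_G(R)$, after smashing with $E$ and taking equivariant homotopy. The plan is therefore to show that (i) this filtration is compatible with a natural multiplicative structure on $\CycN{R}$ and that (ii) this multiplicative structure is identified on the $E^2$-page with the multiplication on Hochschild homology of $RO(G)$-graded $\m{E}_\scalestar$-algebras developed earlier in the paper.

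First, I would observe that when $R$ is a genuine commutative $G$-ring spectrum, the twisted cyclic bar construction $\CycN{R}$ is a simplicial commutative $G$-ring spectrum: each simplicial level is a (twisted) smash power $R^{\smsh k+1}$, which is commutative by the assumption on $R$, and the face and degeneracy maps (multiplication, unit, $g$-twisted cyclic permutation) are maps of commutative $G$-ring spectra. Consequently $\THH_G(R)=|\CycN{R}|$ is itself a commutative $G$-ring spectrum, and smashing with the commutative $G$-ring spectrum $E$ yields a simplicial commutative $G$-ring spectrum $E\smsh\CycN{R}$ whose geometric realization computes $i_G^*(E\smsh\THH_G(R))$.

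Next I would check that the skeletal filtration on $|E\smsh\CycN{R}|$ is a filtration by $G$-equivariant commutative ring spectra ideals in the standard way: the natural pairing of simplicial objects $X_\bullet\smsh Y_\bullet\to (X\smsh Y)_\bullet$ combined with the shuffle map on geometric realizations sends $F_p|X|\smsh F_q|Y|$ into $F_{p+q}|X\smsh Y|$, so the multiplication $|E\smsh\CycN{R}|\smsh|E\smsh\CycN{R}|\to|E\smsh\CycN{R}|$ restricts to a map of filtered $G$-spectra. Taking $RO(G)$-graded equivariant homotopy Mackey functors and passing to the associated spectral sequence therefore yields a pairing of spectral sequences, making it into a spectral sequence of $RO(G)$-graded $\m{E}_\scalestar$-algebras, with differentials that are Leibniz with respect to a bigrading compatible with the $\scalestar$-grading.

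Finally, I would identify the induced algebra structure on $E^2$. On the $E^1$-page the multiplication is the shuffle product on the normalized simplicial $RO(G)$-graded Mackey functor $\m{E}_\scalestar(R^{\smsh\bullet+1})$; under the flatness hypothesis of the main spectral sequence theorem this complex is exactly the twisted Hochschild complex computing $\m{\HH}^{\m{E}_\scalestar,G}_{*}(\m{E}_\scalestar(R))$, and the shuffle product on it is by definition the Hochschild product constructed in the paper's extension of the Blumberg--Gerhardt--Hill--Lawson theory to $RO(G)$-graded Green functor algebras. Thus passing to homology gives the desired multiplication on $E^2$. I expect the main obstacle to lie here, in verifying that the shuffle pairing at the level of simplicial $RO(G)$-graded Mackey functors really does descend to the previously defined Hochschild homology multiplication on Green functor algebras, and in particular that all the box-product flatness and norm-diagonal checks needed to make the shuffle map a map of $\m{E}_\scalestar$-algebras over the norm structure are compatible with the $g$-twisting in $\CycN{R}$; the other filtration/convergence inputs are either routine or already established in the preceding theorem.
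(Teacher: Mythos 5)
Your proposal follows essentially the same route as the paper: establish that commutativity of $R$ (together with the fact that the generator $g$ acts on $R$ by algebra maps) makes the twisted cyclic nerve a simplicial commutative $G$-ring spectrum, smash with $E$, apply $\m\pi_\scalestar$ to land in simplicial $RO(G)$-graded commutative Green functors over $\m E_\scalestar$ via Proposition~\ref{prop:commGreen}, and then argue that the simplicial filtration respects this structure. Where you differ is in emphasis rather than substance: you make explicit the shuffle pairing $F_p|X|\wedge F_q|Y|\to F_{p+q}|X\wedge Y|$ on geometric realizations and the Eilenberg--Zilber product on normalized complexes, whereas the paper compresses all of this into ``Normalizing, we obtain a differential graded object\dots{}since the simplicial filtration\dots{}respects its algebra structure, all the differentials respect the multiplicative structure.'' Your elaboration is in fact the content lurking behind that sentence and is the honest way to make it precise. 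One small point worth making explicit (which the paper does flag): the verification that the last face map $d_q$ is an algebra map is where the hypothesis that $g$ acts on $R$ by ring maps is actually used, not merely commutativity of $R$; you allude to this via the $g$-twisted cyclic permutation but it deserves to be called out. Your closing caution about checking that the shuffle pairing on the twisted cyclic nerve really descends to the Hochschild product on $\m{\HH}^{\m E_\scalestar,G}_*$ is reasonable but is a routine consequence of Proposition~\ref{prop:commGreen} together with the homological framework of Lewis--Mandell already invoked in Theorem~\ref{thm:ss}, so it is not a genuine gap.
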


For classical THH, work of Blumberg, Cohen, and Schlichtkrull facilitates the study of topological Hochschild homology of Thom spectra. In \cite{BCS}, they showed that the Thom spectrum functor and the cyclic bar construction ``commute" in a suitable sense, due to nice symmetric monoidal properties of the Thom spectrum functor and its behavior under colimits. They could then easily compute the topological Hochschild homology of various Thom spectra, such as $H\mathbb{F}_p$ and $H\mathbb{Z}$, $MO$, $MU$, and other cobordism spectra. In the current paper, we study (twisted) topological Hochschild homology of equivariant Thom spectra. 

Recent results due to Behrens and Wilson \cite{BW} and Hahn and Wilson \cite{HW} show that certain equivariant Eilenberg--MacLane spectra can be constructed as equivariant Thom spectra.  As proven in \cite{HKZ}, the equivariant Thom spectrum functor is appropriately $G$-symmetric monoidal and commutes with $G$-colimits, which enables us to describe the $\THH$ and twisted $\THH$ of equivariant Thom spectra, and to make the following computations.

\begin{thm} As $C_2$-spectra,
$$\THH_{C_2}(H\underline{\mathbb{F}}_2) \simeq H\underline{\mathbb{F}}_2 \wedge (\bigvee_{k \geq 0} S^{2k\rho} \vee \bigvee_{k \geq 0} S^{2k\rho +2})$$
and
$$\THH_{C_2}(H\underline{\mathbb{Z}}_{(2)}) \simeq H\underline{\mathbb{Z}}_{(2)} \wedge \Omega^{2\sigma} (\mathbb{H}P^\infty \langle 2\sigma + 2 \rangle) _+.$$
Here $\sigma$ denotes the sign representation of $C_2$, and $\rho = \sigma + 1$ denotes its regular representation.
\end{thm}

In \cite{BlGeHiLa}, the authors define a relative version of Hochschild homology for Green functors as well. For $H \subset G$, and $\m{R}$ an $H$-Green functor, the $H$-relative Hochschild homology of $\m{R}$, $\m{\HH}_H^G(\m{R})_*$, is defined using a $G$-twisted cyclic bar construction on the Mackey functor norm, $N_H^G\m{R}$ (see Section \ref{Hochschild} for details). A Green functor for the trivial group is just a classical ring, and hence the relative theory of Hochschild homology for Green functors also yields new ring invariants. For a ring $R$, $\m{\HH}_e^G({R})_*$ is defined using a $G$-twisted cyclic bar construction on the Mackey functor norm $N_e^{G}(R).$ 

In \cite{BlGeHiLa} the authors also construct a type of cyclotomic structure on Hochschild homology for Green functors. In the case of rings, this cyclotomic structure provides the framework for the definition of an algebraic analogue of topological restriction (TR) homology, which the authors call $tr(A)$. In this paper, we perform the following new computations of the $C_{p^n}$-twisted Hochschild homology of rings and of this algebraic TR-theory. 

\begin{prop}
For the constant $\{e\}$-Green functor $\Z$,
\[ \m{\HH}_{e}^{C_{p^n}}(\Z)_k = \begin{cases} \m{A}^{C_{p^n}} &: k=0 \\ 0 &: \text{otherwise,} \end{cases}\]
and
\[tr_{k}(\Z;p) = \begin{cases} \Z^{\infty} &: k=0 \\ 0 &: \text{otherwise.}  \end{cases}
\]
Here $\m{A}^{C_{p^n}}$ is the $C_{p^n}$-Burnside Mackey functor. 
\end{prop}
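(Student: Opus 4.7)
My plan is to reduce both computations to the collapse of tensor powers of $\Z$ over itself. The first part is really an algebraic analogue of the fact that $\THH(\Z)_k = 0$ for $k>0$, and the second part follows from the first together with a careful analysis of the limit.

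To compute $\m{\HH}^{C_{p^n}}_e(\Z)$, I would unwind the defining twisted cyclic bar construction for the trivial-group Green functor $\Z$. Its $k$-simplices are obtained by taking the norm $N_e^{C_{p^n}}$ of a box product of $k+1$ copies of $\Z$, and since the relevant box product for the trivial-group Green functor is just tensor product over $\Z$, each simplicial level collapses to $N_e^{C_{p^n}}(\Z)$. All face and degeneracy maps, being built from the unit and multiplication of $\Z$, become identities under this identification, so the simplicial Mackey functor is essentially constant. Hence $\m{\HH}^{C_{p^n}}_e(\Z)$ is concentrated in degree zero, where it equals $N_e^{C_{p^n}}(\Z)$. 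The identification $N_e^{C_{p^n}}(\Z)\cong\m{A}^{C_{p^n}}$ expresses that the norm of the initial Green functor for the trivial group is the initial $C_{p^n}$-Green functor, and can be checked directly from the box-product formula for the norm of Mackey functors.

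For the TR computation I would invoke the construction of $tr(-;p)$ from \cite{BlGeHiLa}. That construction assembles the family $\{\m{\HH}^{C_{p^n}}_e(\Z)\}_n$ into $tr(\Z;p)$ via an inverse limit governed by the cyclotomic structure, together with a Milnor $\lim^{1}$ correction. Since the first part of the proposition shows that $\m{\HH}^{C_{p^n}}_k(\Z)$ vanishes for every $k>0$ and every $n$, both the limit and the $\lim^{1}$ term vanish in positive degree, giving $tr_k(\Z;p)=0$ for $k>0$. For $k=0$, the relevant tower is the tower of Burnside rings $A(C_{p^n})\cong\Z^{n+1}$ connected by the restriction maps produced by the cyclotomic structure.

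The main obstacle will be identifying this inverse limit with $\Z^\infty$. One must match the cyclotomic structure maps from \cite{BlGeHiLa} on the Burnside Mackey functors $\m{A}^{C_{p^n}}$ with the standard restriction maps $A(C_{p^{n+1}})\to A(C_{p^n})$ on the basis of transitive $C_{p^n}$-sets, and then verify that the resulting tower of free abelian groups of ranks $1,2,3,\dots$ has inverse limit $\Z^\infty$. I expect this to follow from a direct combinatorial analysis of how restrictions and transfers act on transitive $C_{p^n}$-sets, together with the explicit Frobenius-like maps supplied by the cyclotomic structure; this bookkeeping, rather than the vanishing in positive degrees, is where the real work lies.
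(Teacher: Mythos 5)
Your proposal follows essentially the same route as the paper's proof. For the first part, the paper applies the norm first and then takes the twisted cyclic bar construction of the resulting $C_{p^n}$-Green functor: its $q$-simplices are $(N_e^{C_{p^n}}\underline{\Z})^{\Box(q+1)} = (\m{A}^{C_{p^n}})^{\Box(q+1)} = \m{A}^{C_{p^n}}$, using that $\m{A}^{C_{p^n}}$ is the unit for the box product of $C_{p^n}$-Mackey functors. Your description (``norm of a box product of $k+1$ copies of $\Z$'') is stated inside out, but comes to the same thing by symmetric monoidality of the norm; one point you should make explicit, and which the paper does address, is that the twist in the last face map is trivial because the Weyl action on $\m{A}^{C_{p^n}}$ is trivial. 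For the second part, a couple of small inaccuracies: the paper's $tr_k$ is defined purely as an inverse limit (degree by degree), so there is no Milnor $\lim^1$ correction to worry about; and the phrase ``standard restriction maps'' risks being read as the Mackey restriction $res^{C_{p^n}}_{C_{p^{n-1}}}: A(C_{p^n})\to A(C_{p^{n-1}})$, which is \emph{not} the map in the tower. The algebraic restriction $r$ comes from the cyclotomic structure (factoring through geometric fixed points and a quotient by $E\mathcal{F}_{C_p}$), and in degree zero it is the coordinate projection $\Z^{n+1}\to\Z^{n}$ killing the orbit $[C_{p^n}/e]$; by contrast the Mackey restriction would introduce factors of $p$ and lead to an inverse limit of $\Z$, not $\Z^\infty$. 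With those caveats your plan is correct and, once the identification of $r$ with the projection is made as in the paper's three-step factorization, the inverse limit computes to $\Z^\infty$ exactly as you expect.
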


\subsection{Organization}

In Section \ref{sec:relTHH}, we recall the definition of $C_n$-relative, or twisted, topological Hochschild homology for $C_n$-ring spectra. In Section \ref{sec:HHGreen} we review the definitions of Mackey and Green functors, as well as the theory of Hochschild homology for Green functors. We discuss how this theory of Hochschild homology relates to classical and equivariant topological Hochschild homology. We also make new computations of Hochschild homology for Green functors and of an algebraic analogue of topological restriction homology in this section. Section \ref{sec:bokstedt} focuses on the construction, algebraic structure, and applications of the equivariant B\"{o}kstedt spectral sequence. In Section \ref{sec:thomspectra} we explore a different computational approach in the context of the twisted THH of equivariant Thom spectra.  

\subsection{Notation and Conventions}

Throughout, we are working with genuine orthogonal $G$-spectra indexed on a complete universe. We use $*$ to denote integer gradings, $\scalestar$ to denote $RO(G)$-gradings, and $\bullet$ to denote simplicial gradings.

\subsection{Acknowledgements}
 
This paper is one part of the authors' Women in Topology III project. A second part of that project will appear in a separate article \cite{AGHKK}.  We are grateful to the organizers of the Women in Topology III workshop, as well as to the Hausdorff Research Institute for Mathematics, where much of this research was carried out. We are grateful to Mike Hill and Dylan Wilson for many enlightening discussions related to this work. We also thank Christy Hazel, Asaf Horev, Dan Isaksen, Clover May, and Foling Zou for helpful conversations. The authors also thank an anonymous referee for helpful comments and for catching an error in a previous draft. The second author was supported by NSF grant DMS-1810575. The Women in Topology III workshop was supported by NSF grant DMS-1901795, the AWM ADVANCE grant NSF HRD-1500481, and Foundation Compositio Mathematica. 

\section{Twisted topological Hochschild homology of equivariant spectra}\label{sec:relTHH}

In \cite{AnBlGeHiLaMa}, Angeltveit, Blumberg, Gerhardt, Hill, Lawson, and Mandell define a theory of $C_n$-relative, or twisted, topological Hochschild homology for $C_n$-ring spectra. Given a $C_n$-ring spectrum $R$, its $C_n$-relative topological Hochschild homology, THH$_{C_n}(R)$, is a relative norm, $N_{C_n}^{S^1}(R).$ We now recall the explicit construction of this norm in terms of a twisted cyclic bar construction. More details of this construction can be found in \cite{AnBlGeHiLaMa}. 

Let $R$ be an associative orthogonal $C_n$-ring spectrum indexed on the trivial universe $\mathbb{R}^{\infty}$. The \emph{$C_n$-twisted cyclic bar construction} on $R$, $B_{\bullet}^{cy, C_n}(R)$, is a simplicial spectrum, where
$$
B_{q}^{cy, C_n}(R) = R^{\wedge(q+1)}.
$$
Let $g$ denote the generator $e^{2\pi i/n}$ of $C_n$, and let $\alpha_q: R^{\wedge(q+1)} \to R^{\wedge(q+1)}$ denote the map that cyclically permutes the last factor to the front and then acts on the new first factor by $g$. Let $\mu$ and $\eta$ denote the multiplication map and unit map of $R$, respectively. The face and degeneracy maps on $B_{q}^{cy, C_n}(R)$ are given by 
$$
d_i = \left\{\begin{array}{ll} \Id^{\wedge i} \wedge \mu \wedge \Id^{\wedge (q-i-1)} &: 0\leq i<q \\  (\mu \wedge \Id^{\wedge (q-1)}) \circ \alpha_q &: i=q,
\end{array}\right.
$$
and 
$$
s_i =  \Id^{\wedge (i+1)} \wedge \eta \wedge \Id^{\wedge (q-i)} \hspace{1cm}\forall\,  0\leq i \leq q.
$$
This yields a simplicial object $B_{\bullet}^{cy, C_n}(R)$. 

Recall that there is an operator $\tau_q$ on the $q$-th simplicial level of the classical cyclic bar construction such that $\tau_q^{q+1} = \Id$. This operator $\tau$ satisfies certain relations with the face and degeneracy maps, giving the cyclic bar construction the structure of a cyclic set, which implies that the geometric realization admits an $S^1$-action (\cite{Con}, \cite{BuFi}, \cite{Jon}). The $C_n$-twisted cyclic bar construction, however, is not a cyclic object. Note, though, that for every $q$, the operator $\alpha_q$, satisfies $\alpha_q^{n(q+1)} = \Id.$ Furthermore, the operator $\alpha_q$ satisfies the following relations with respect to the face and degeneracy maps. 
$$
\begin{array}{ll}
d_o \alpha_q = d_q & \\

d_i \alpha_q = \alpha_{q-1}d_{i-1} & 1 \leq i \leq q \\
s_i \alpha_q = \alpha_{q+1}s_{i-1} & 1 \leq i \leq q\\
 s_0 \alpha_q = \alpha_{q+1}^2 s_q & \\
\end{array}
$$
The $C_n$-twisted cyclic bar construction thus admits the structure of a $\Lambda_n^{op}$-object, in the sense of B\"okstedt-Hsiang-Madsen \cite{BHM}. By \cite{BHM} the geometric realization of the $C_n$-twisted cyclic bar construction therefore has an $S^1$-action, extending  the simplicial $C_n$-action generated on the $q$-th level by $\alpha_q^{q+1}$.

The norm from $C_n$ to $S^1$, which is a $C_n$-relative version of topological Hochschild homology, is defined in terms of this twisted cyclic bar construction. 

\begin{defn}
Let $U$ be a complete $S^1$-universe, and let $\widetilde{U} = \iota_{C_n}^* U$, the pullback of the universe to $C_n$. Let $R$ be an associative orthogonal $C_n$-ring spectrum indexed on $\widetilde{U}$. The \emph{$C_n$-relative topological Hochschild homology} of $R$ is defined to be the norm $N_{C_n}^{S^1}(R)$, given by
$$
\THH_{C_n}(R) =N_{C_n}^{S^1}(R) =  \mathcal{I}_{\mathbb{R}^{\infty}}^U |B^{cy, C_n}_{\bullet}(\mathcal{I}_{\widetilde{U}}^{\mathbb{R}^{\infty}} R)|,
$$
where $\mathcal I$ denotes a change-of-universe functor.
\end{defn}

In \cite{AnBlGeHiLaMa} the authors prove that when $R$ is a commutative $C_n$-ring spectrum, the norm functor from commutative $C_n$-ring spectra to commutative $S^1$-ring spectra is left adjoint to the forgetful functor, as one would expect from a norm construction.

In the nonequivariant case,  THH of any commutative ring spectrum $A$  can be constructed in terms of the natural simplicial tensoring of commutative ring spectra over simplicial sets, as $A \otimes S^1$ \cite{MSV}. Analogously, the $C_n$-relative THH of a commutative $C_n$-spectrum $R$ is a relative tensor
$$
\THH_{C_n}(R)\simeq \mathcal{I}_{\mathbb{R}^{\infty}}^U(R\otimes_{C_n}S^1),
$$
which is constructed as follows. If $\widetilde{U}= \iota_{C_n}^* U$, then $R\otimes_{C_n}S^1$ is the coequalizer
$$
\xymatrix{ (\mathcal{I}_{\widetilde{U}}^{\mathbb{R}^{\infty}} R) \otimes C_n \otimes S^1 \ar@<1ex>[r] \ar@<-.5ex>[r] & (\mathcal{I}_{\widetilde{U}}^{\mathbb{R}^{\infty}} R) \otimes S^1 }
,
$$
where one map comes from the usual action of $C_n$ on $S^1$ and the other from the induced action $(\mathcal{I}_{\widetilde{U}}^{\mathbb{R}^{\infty}} R) \otimes C_n \to \mathcal{I}_{\widetilde{U}}^{\mathbb{R}^{\infty}} R.$

Ordinary topological Hochschild homology of a ring spectrum admits a cyclotomic structure (see \cite{HesselholtMadsen} or \cite{NikolausScholze}  for more on cyclotomic spectra). This cyclotomic structure on THH yields maps
\[
R: \THH(A)^{C_{p^{n}}} \to \THH(A)^{C_{p^{n-1}}}
\]
called restriction maps, which can be used to define topological restriction homology:
\[
\textup{TR}(A;p):= \holim_{\substack{\longleftarrow \\ R}} \THH(A)^{C_{p^{n}}}.
\]
Topological restriction homology gives rise in turn to  topological cyclic homology, which is a close approximation to algebraic $K$-theory under good circumstances. Understanding the invariants THH and TC is key to understanding algebraic $K$-theory via the trace method approach.

Cyclotomic structures arise in the equivariant setting as well. In particular, in \cite{AnBlGeHiLaMa} the authors prove that if $R$ is a $C_n$-ring spectrum, and $p$ is prime to $n$, then $\THH_{C_n}(R)$ is a $p$-cyclotomic spectrum. It is thus possible to define $C_n$-relative topological restriction homology and topological cyclic homology, TR$_{C_n}(R;p)$ and TC$_{C_n}(R;p)$.

\section{Hochschild homology for Green functors}\label{sec:HHGreen}

Topological Hochschild homology is a topological analogue of the classical algebraic theory of Hochschild homology. Indeed, topological Hochschild homology can be constructed via a cyclic bar construction, directly modeled after the algebraic construction. For a ring $A$, the topological Hochschild homology of the Eilenberg-MacLane spectrum $HA$ and the Hochschild homology of $A$ are related via a linearization map
$$
\pi_q\left(\THH(HA)\right ) \to \HH_q(A),
$$
which factors the Dennis trace map from algebraic $K$-theory to Hochschild homology
$$
K_q(A) \to \pi_q\left(\THH(HA)\right) \to \HH_q(A). 
$$

In \cite{BlGeHiLa} the authors addressed the natural question of whether there is an algebraic analogue of $C_n$-relative topological Hochschild homology. Recall that $C_n$-relative THH takes as input $C_n$-ring spectra. For $C_n$-ring spectra to arise as Eilenberg-MacLane spectra, the construction of Eilenberg-MacLane spectra must be extended from abelian groups and ordinary rings to Mackey functors and Green functors. The algebraic analogue of $C_n$-relative topological Hochschild homology should thus be a theory of Hochschild homology for Green functors. We recall this theory in Section \ref{Hochschild} below, after reviewing  basic definitions for Mackey and Green functors.

\subsection{Mackey and Green functors}\label{Mackey}
Throughout this section, let $G$ denote a finite abelian group.

In equivariant homotopy theory, Mackey functors play the role that abelian groups play in the non-equivariant theory. In particular, a $G$-spectrum $X$ has associated homotopy Mackey functors, rather than just homotopy groups. We begin by recalling the definition of a Mackey functor. 

\begin{defn}
Given finite $G$-sets $S$ and $T$, a \emph{span} from $S$ to $T$ is a diagram
$$
S \leftarrow U \to T,
$$
where $U$ is also a finite $G$-set, and the maps are $G$-equivariant. An \emph{isomorphism of spans} is a commuting diagram of finite $G$-sets
$$
\xymatrix @R=.6pc{& U \ar[dl] \ar[dd]^{\cong} \ar[dr] & \\
S & & T. \\
& V \ar[ul] \ar[ur] & \\
}
$$
Composition of spans is given by pullback. There is also a monoidal product on the set of spans with fixed endpoints. Given two spans, $S \leftarrow U \to T$ and $S \leftarrow U' \to T$, their product is defined via the disjoint union, $S\leftarrow U \amalg U'\to T$. 
\end{defn}

\begin{defn}
The \emph{Burnside category} of $G$, $\mathcal{A}_G$, has as objects all finite $G$-sets. For finite $G$-sets $S$ and $T$, the morphism set $\mathcal{A}_G(S, T)$ is the group completion of the monoid of isomorphism classes of spans $S \leftarrow U \to T$.
\end{defn}

\begin{defn}
A \emph{Mackey functor} is an additive functor
$
\m{M}: \mathcal{A}_G^{op} \to {\mathcal Ab}.
$
\end{defn}

Note that any finite $G$-set $X$ is isomorphic to a disjoint union of orbits $G/H$ for various subgroups $H$ of $G$. It follows that every Mackey functor is determined by its values on the orbits $G/H$, since Mackey functors are additive. 

Explicitly, a Mackey functor $\m{M}$ is equivalent to a pair of functors from finite $G$-sets to abelian groups
$$
M_*, M^*: GSets \to {\mathcal Ab},
$$
where $M_*$ is covariant and $M^*$ contravariant, such that both functors take disjoint unions to direct sums, and the following conditions are satisfied. For any $G$-set $X$,  $M_*(X) = M^*(X)$, and the common value is denoted $\m{M}(X).$  Further, if
$$
\xymatrix{W \ar[r]^{g'} \ar[d]^{f'} & X \ar[d]^{f} \\
Z \ar[r]^{g} & Y
}
$$
is a pullback diagram in $GSets$, then $M^*(f)M_*(g) = M_*(g')M^*(f')$. 

Every sequence of subgroup inclusions $K \subset H \subset G$ induces a natural surjection $q_{K,H}: G/K \to G/H$. The homomorphism $M_*(q_{K,H}):\m M(G/K) \to \m M (G/H)$ is called the \emph{transfer map} and denoted $tr_K^H.$ The homomorphism $M^*(q_{K,H}):\m M(G/H) \to \m M (G/K)$ is called the \emph{restriction map} and denoted $res_K^H$.

\begin{defn}
The \emph{Burnside ring of $G$}, $A(G)$, is the group completion of the monoid of isomorphism classes of finite $G$-sets under disjoint union. Multiplication in this ring is given by Cartesian product. 
\end{defn}

\begin{exmp}\label{ex:Burnside}The \emph{Burnside Mackey functor} for $G$, denoted $\m{A}$, is defined by $\m{A}(G/H) = A(H)$ for all $H\subset G$. The transfer and restriction maps are given by induction and restriction maps on finite sets. More explicitly, for $K \subset H \subset G$,  and $X$ a finite $K$-set and $Y$ a finite $H$-set,
$$tr_K^H([X]) = [H \times_K X] \quad \text{and} \quad res_K^H([Y]) = [i_K^H(Y)],$$ 
where $i_K^H: HSet \to KSet$ is the restriction functor.
\end{exmp}

\begin{exmp}
Let $X$ be a $G$-spectrum. For all $q$, the equivariant homotopy groups of $X$ form a $G$-Mackey functor, $\m{\pi}^G_q$, defined by
$$
\m{\pi}^G_q(X)(G/H) := \pi_q(X^H).
$$
\end{exmp}

The category Mack$_G$  of $G$-Mackey functors admits a symmetric monoidal structure defined as follows.

\begin{defn} The \emph {box product} $\m{L} \Box \m{M}$ of two $G$-Mackey functors $\m{L}$ and $\m{M}$  is defined as a left Kan extension over the Cartesian product of finite $G$-sets 
\[
\xymatrix{ {\mathcal{A}_G\times\cA_G}\ar[d]_{\mhyphen\times\mhyphen} \ar[rr]^{\m{L}\times \m{M}} &&
  {\mathcal Ab\times\mathcal Ab}\ar[rr]^{\mhyphen\otimes\mhyphen} && {\mathcal Ab}
  \\ {\mathcal{A}_G}\ar[urrrr]_{\m{L}\Box\m{M}} }
  \]
and is again a $G$-Mackey functor. 
\end{defn}

It is easy to see that the Burnside Mackey functor $\m{A}$ is a unit for the box product.

The box product in Mackey functors is closely related to the smash product of orthogonal $G$-spectra. It follows from \cite[1.3]{LeMa} that for cofibrant $(-1)$-connected $G$-spectra $X$ and $Y$, there is a natural isomorphism
\begin{equation}\label{compatibility}
\m{\pi}_0X \Box \m{\pi}_0Y \cong \m{\pi}_0(X \wedge Y). 
\end{equation}

 A $G$-Mackey functor $\m{M}$ has an associated Eilenberg-Mac Lane $G$-spectrum, $H\m{M}$, the defining property of which is that   
$$
\m{\pi}^G_k(H\m{M}) \cong \left\{ \begin{array}{ll} \m{M} &: k = 0 \\
0 &: k \neq 0, \\
\end{array} \right.
$$
(see, for example, \cite{dS} or \cite{dSNie}).
We can then give a homotopical description of the box product of Mackey functors. It follows from isomorphism \ref{compatibility} above that for any two $G$-Mackey functors $\m{L}$ and $\m{M}$:
$$
\m{L} \Box \m{M} \cong \m{\pi}_0(H\m{L} \wedge 
H\m{M}).
$$

\begin{defn}
A \emph{Green functor} is an associative monoid in the symmetric monoidal category Mack$_G$. A  \emph{commutative Green functor} is a commutative monoid.
\end{defn}

In this article, we also need \emph{graded} Mackey functors, both $\mathbb{Z}$-graded and $RO(G)$-graded, where $RO(G)$ denotes the real representation ring of $G$.
 
\begin{defn}
(1) A \emph{$\mathbb{Z}$-graded Mackey functor} for $G$, $\m{M}_*$, is a functor from the discrete category $\mathbb Z$ to Mack$_G$, i.e.,   a set $\{\m{M}_q\mid q\in \mathbb Z\}$ of  Mackey functors. A map of $\mathbb{Z}$-graded Mackey functors for $G$, $\m{L}_* \to \m{M}_*$, is a natural transformation, i.e.,  a set $\{\m{L}_q \to \m{M}_q\mid q\in \mathbb Z\}$ of  maps of Mackey functors. 

(2) An \emph{$RO(G)$-graded Mackey functor} for $G$, $\m{M}_{\scalestar}$, is a functor from the discrete category $RO(G)$ to Mack$_G$, i.e.,   a set $\{\m{M}_\alpha\mid \alpha\in RO(G)\}$ of  Mackey functors. A map of $RO(G)$-graded Mackey functors, $\m{L}_\scalestar \to \m{M}_\scalestar$, is a natural transformation, i.e.,  a set $\{\m{L}_\alpha \to \m{M}_\alpha\mid \alpha\in RO(G)\}$ of  maps of Mackey functors.
\end{defn}

\begin{exmp}
For every $G$-spectrum $X$, there is a  $\mathbb{Z}$-graded homotopy Mackey functor, $\m{\pi}_*^G(X)$, given by $\m{\pi}_q^G(X)(G/H) = \pi_q(X^H)$ for all $q\in \mathbb Z$.

\end{exmp}
Recall that for a $G$-spectrum, $X$, one can define $RO(G)$-graded equivariant homotopy groups as follows. An element $\alpha \in RO(G)$ can be written as $ \alpha = [\beta] - [\gamma]$, where $\beta$ and $\gamma$ are finite dimensional real representations of $G$. Let $H$ be a subgroup of $G$. Then the  equivariant homotopy group $\pi_{\alpha}(X^H)$ is defined as
$$
\pi_{\alpha}(X^H) := [S^{\beta} \wedge G/H_{+}, S^{\gamma} \wedge X ]_G
$$
Note that this is a priori only well defined up to non-canonical isomorphism. In order to get around this, one can work with $hRO(G,U)$, the homotopy category of a certain category of representations of $G$ embedded in a chosen complete $G$-universe $U$. See Section XIII.1 of \cite{PMay} for details. This ensures that we account for a Burnside ring's worth of automorphisms of each $\alpha \in RO(G)$. 

\begin{exmp} For a $G$-spectrum $X$, there is an $RO(G)$-graded homotopy Mackey functor, $\m{\pi}_{\scalestar}^G(X)$, given by  $\m{\pi}_{\alpha}^G(X)(G/H) = \pi_{\alpha}(X^H)$ for all $\alpha\in RO(G)$.
\end{exmp}

As discussed in \cite{LeMa}, there is a graded version of the box product, given by Day convolution, endowing the categories of $\mathbb{Z}$-graded and $RO(G)$-graded Mackey functors with symmetric monoidal structures. 

\begin{defn}
Let $\m{L}_*$ and $\m{M}_*$ be $\mathbb{Z}$-graded Mackey functors for $G$. The graded box product $\m{L}_* \Box \m{M}_*$ is defined in terms of the ungraded box product by
$$
(\m{L}_* \Box \m{M}_*)_q = \bigoplus_{i+j = q} \m{L}_i \Box \m{M}_j.
$$
Similarly,  the graded box product $\m{L}_{\scalestar} \Box \m{M}_{\scalestar}$  of $RO(G)$-graded Mackey functors $\m{L}_{\scalestar}$ and $\m{M}_{\scalestar}$ is given by
$$
(\m{L}_{\scalestar} \Box \m{M}_{\scalestar})_{\alpha} = \bigoplus_{\gamma + \beta = \alpha} \m{L}_{\gamma} \Box \m{M}_{\beta}.
$$
\end{defn}

The unit for the ($\mathbb{Z}$ or $RO(G)$)-graded box product is the ($\mathbb{Z}$ or $RO(G)$)-graded Burnside Mackey functor $\m{A}_*,$ which is the Burnside Mackey functor $\m{A}$ in degree 0, and 0 in all other degrees. 

It is important in this paper to know that the $RO(G)$-graded homotopy functor is monoidal.

\begin{lem}\label{lem:monoidal}\cite [Theorem 5.1]{LeMa} For every finite group $G$, the $RO(G)$-graded homotopy functor $\m \pi_\scalestar$ from $G$-spectra to $RO(G)$-graded Mackey functors is monoidal.
\end{lem}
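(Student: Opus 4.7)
The plan is to construct a lax monoidal structure on $\m\pi_\scalestar$ by specifying a unit and a natural multiplicative pairing, and then to verify the coherence axioms. The key input is the representability of $\m\pi_\scalestar$: for each $\alpha \in RO(G)$ and each finite $G$-set $S$, one has a natural identification $\m\pi_\alpha(X)(S) \cong [S_+ \wedge S^\alpha, X]^G$, extended additively to $\mathcal A_G^{op}$. This should be combined with the description of the box product on $RO(G)$-graded Mackey functors as the Day convolution along the Cartesian product $\times \colon \mathcal A_G \times \mathcal A_G \to \mathcal A_G$, graded by addition in $RO(G)$.

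First, I would define the unit $\m A_\scalestar \to \m\pi_\scalestar^G(\mathbb S)$ as the Segal--tom Dieck isomorphism $\m A \cong \m\pi_0^G(\mathbb S)$ in degree $0$ and the zero map in all other degrees. For the multiplicative structure, the universal property of Day convolution reduces the problem to exhibiting natural bilinear pairings
\[
\m\pi_\alpha(X)(S) \otimes \m\pi_\beta(Y)(T) \to \m\pi_{\alpha+\beta}(X \wedge Y)(S \times T),
\]
sending a pair of representatives $[f], [g]$ to the class of the external smash $f \wedge g \colon (S\times T)_+ \wedge S^{\alpha+\beta} \to X \wedge Y$, where one uses the canonical identifications $S_+ \wedge T_+ \cong (S\times T)_+$ and $S^\alpha \wedge S^\beta \cong S^{\alpha+\beta}$. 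Checking that these pairings are functorial in $S$ and $T$ as objects of $\mathcal A_G^{op}$ upgrades them to a morphism $\m\pi_\scalestar(X) \Box \m\pi_\scalestar(Y) \to \m\pi_\scalestar(X \wedge Y)$.

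Finally, I would verify the associativity, unitality, and symmetry coherences by picking representatives and invoking the corresponding coherences of $\wedge$ for genuine $G$-spectra, then passing to homotopy. The hard part is checking that the external smash pairing is natural in the covariant, transfer-inducing direction of $\mathcal A_G$; naturality under restrictions is transparent from the construction, but behavior under transfers is not. The essential tool is the Wirthm\"uller isomorphism $G/H_+ \wedge X \simeq G_+ \wedge_H i_H^* X$ together with its associated projection formula, which imply that the external smash product commutes with induction of representing maps, and hence that the Frobenius-reciprocity condition characterizing maps out of a Day convolution is satisfied. This is the content of Lewis and Mandell's argument in the cited result.
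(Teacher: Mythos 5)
The paper does not supply its own argument for this lemma: it is taken verbatim from Lewis--Mandell \cite[Theorem 5.1]{LeMa}, so there is no internal proof against which to compare your attempt. That said, your reconstruction hits the right structural points: the lax monoidal structure map is built by Day convolution from the external smash pairing on representatives, the unit is the Segal--tom Dieck map $\m A \to \m\pi^G_0(\mathbb S)$, and the genuine technical content is covariant naturality in $\mathcal A_G$, i.e.\ compatibility with transfers, which is exactly where the Wirthm\"uller isomorphism and the projection formula enter. You are also right that ``monoidal'' here means lax monoidal; the strong version would require the Künneth hypothesis and is false in general.

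Two points are glossed over that account for most of the care in the actual reference. First, the representability identification $\m\pi_\alpha(X)(S)\cong[S_+\wedge S^\alpha,X]^G$ requires choosing a presentation $\alpha=[\beta]-[\gamma]$, and the pairing has to be shown independent of these choices; tracking the isomorphisms $S^{\beta}\wedge S^{\beta'}\cong S^{\beta+\beta'}$ coherently over all of $RO(G)$ is where the Burnside-ring ``signs'' $\sigma(\alpha,\beta)$ appear, and this bookkeeping is nontrivial even when one only wants lax monoidality rather than lax symmetric monoidality. Second, ``functoriality in $\mathcal A_G^{\mathrm{op}}$'' for the pairing $\m\pi_\alpha(X)(S)\otimes\m\pi_\beta(Y)(T)\to\m\pi_{\alpha+\beta}(X\wedge Y)(S\times T)$ already subsumes the transfer condition you treat as a separate check; phrasing it as a Frobenius-reciprocity verification on top of Day convolution naturality double-counts, though the underlying computation (external smash commutes with induction, via Wirthm\"uller) is the correct one. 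Neither issue is a gap in the sense of a wrong step; they are places where the proposal is a sketch of what Lewis and Mandell do in detail.
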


In this work we will also need to consider graded Green functors. We recall their definition from \cite{LeMa}, where they are referred to as graded Mackey functor rings.

\begin{defn}
A \emph{graded Green functor} (with a $\mathbb{Z}$ or $RO(G)$-grading) is an associative monoid in the category of graded Mackey functors, with respect to the graded box product.
\end{defn}

Lemma \ref{lem:monoidal} implies that if $R$ is a $G$-ring spectrum, then $\m \pi_\scalestar(R)$ is an $RO(G)$-graded Green functor.

\subsection{Hochschild homology for Green functors}\label{Hochschild}

In this section we recall the construction of Hochschild homology for Green functors, given by Blumberg, Gerhardt, Hill, and Lawson in \cite{BlGeHiLa}. We then extend this construction to  graded Green functors.

We begin by observing that if $G$ is cyclic, then every $G$-Mackey functor $\m{M}$ admits a natural $G$-action. To see this, recall that for any group $G$ and $G$-Mackey functor $\m{M}$, the Weyl group $W_G(H)= N_G(H)/H$ acts on $\m{M}(G/H)$. If $G$ is a cyclic group, then $W_G(H)=G/H$ for every subgroup $H$ of $G$. It follows that $\m{M}(G/H)$ is a $G$-module for all $H$, and the restriction and transfer maps are maps of $G$-modules, i.e.,  the Mackey functor $\m{M}$ admits a $G$-action. We define the $G$-twisted cyclic bar construction for a $G$-Mackey functor using this action.

\begin{defn}
Let $G \subset S^1$ be a finite subgroup, and let $g$ denote the generator $e^{2\pi i/|G|}$ of $G$. Let $\m{R}$ be a $G$-Green functor. The \emph{$G$-twisted cyclic bar construction on $\m{R}$}, $\m{B}_{\bullet}^{cy, G}(\m{R})$, is a simplicial Green functor, where
$$
\m{B}_{q}^{cy, G}(\m{R}) = \m{R}^{\Box(q+1)}.
$$
The face and degeneracy maps are defined as they are for the twisted cyclic bar construction in Section \ref{sec:relTHH} above. 
\end{defn}

More generally, one can define the twisted cyclic nerve of a $G$-Green functor $\m{R}$ with coefficients in an $\m{R}$-bimodule $\m{M}$, with respect to  an element $g \in G$. First, we explain how to twist module structures over Green functors.

\begin{defn}\label{twistaction}
Let $G \subset S^1$ be a finite subgroup, and let $g \in G$. Let $\m{R}$ be a Green functor for $G$, and let $\m{M}$ be a left
$\m{R}$-module with action map $\lambda$. The \emph{$g$-twisted module structure} on $\m M$, denoted $\gmM$, has  action map ${}^{g}\!\lambda$ specified by the commuting diagram
$$
\xymatrix{\m{R} \Box \m{M} \ar[d]_{g \Box 1} \ar[dr]^{{}^{g}\!\lambda} & \\
\m{R} \Box \m{M} \ar[r]^{\lambda} & \m{M}.}
$$
\end{defn}

\begin{defn}
Let $G \subset S^1$ be a finite subgroup, and let $g \in G$. Let $\m{R}$ be a Green functor for $G$, and let $\m{M}$ be an
$\m{R}$-bimodule. The \emph{$G$-twisted cyclic nerve},  $\m{B}_{\bullet}^{cy,G}(\m{R}; \gmM)$, is the simplicial Mackey functor with $q$ simplices
$$
\m{B}_{q}^{cy, G}(\m{R}, \gmM) =\gmM \Box \m{R}^{\Box q}.
$$
The face maps $d_i$ are given as usual by multiplication of the $i$th and $(i+1)$st factors if $0< i <q$. The face map $d_0$ is the ordinary right module action map for $\m{M}$, while the last face map, $d_q$, rotates the last factor to the front and then uses the twisted left action map of Definition \ref{twistaction}. The degeneracy maps $s_i$ are induced by inclusion of the unit after the $i$th factor, for $0 \leq i \leq q$.
\end{defn}

The twisted cyclic bar construction above is the case where the bimodule $\m{M}$ is $\m{R}$ itself, and $g$ is the generator $e^{2\pi i/|G|}$ of $G$, i.e., 
$$
\m{B}_{\bullet}^{cy, G}(\m{R}) = \m{B}_{\bullet}^{cy,G}(\m{R}; \gmR).
$$

We can now define the $G$-twisted Hochschild homology of $\m{R}$ by taking the homology of this simplicial object. 
\begin{defn}
Let $G \subset S^1$ be a finite group and $\m{R}$ an associative Green functor for $G$. The \emph{$G$-twisted Hochschild homology} of $\m{R}$ is defined by
$$
\m{\HH}_i^G(\m{R}) = H_i(\m{B}_{\bullet}^{cy, G}(\m{R})).
$$
\end{defn}
Applying the Dold-Kan correspondence at each orbit gives rise to an equivalence between the category of simplicial Mackey functors and the category of non-negatively graded dg Mackey functors. By definition, the homology of a simplicial Mackey functor is the homology of the associated normalized dg Mackey functor. See \cite{BlGeHiLa} for more discussion of simplicial and differential graded Mackey functors.

One motivation for developing  a Hochschild theory for Green functors is the desire for an algebraic analogue of the twisted topological Hochschild homology discussed in Section \ref{sec:relTHH}. Let $H \subset G \subset S^1$ be finite subgroups and $R$ an $H$-ring spectrum. As seen in \cite{BlGeHiLa}, there is a weak equivalence 
$$
\iota_G^* \THH_H(R) = N_H^G R \wedge_{N_H^G R^e} {}^{g}\!N_H^G R,
$$
where $N_H^G R^e$ denotes the enveloping algebra $N_H^G R \wedge (N_H^G R)^{op}$, and ${}^{g}\!N_H^G R$ denotes $N_H^GR$ with the bimodule structure twisted on one side by the Weyl action. A full algebraic analogue of relative THH therefore requires a notion of $G$-twisted Hochschild homology for an $H$-Green functor $\m{R}$, defined using norms on Mackey functors. We now recall the definition of Mackey functor norms from \cite{HillHopkins16}.

\begin{defn}
Let $H$ be a subgroup of a finite group $G$, and let $\m{M}$ be an $H$-Mackey functor. The \emph{norm} $N_H^G \m{M}$ is defined to be the $G$-Mackey functor
$$
N_H^G \m{M} := \m{\pi}_0^G N_H^G( H\m{M}).
$$
\end{defn}

\begin{defn}
Let $H \subset G \subset S^1$ be finite subgroups, and let $\m{R}$ be an associative Green functor for $H$. The \emph{$G$-twisted Hochschild homology} of $\m{R}$ is defined to be
$$
\m{\HH}^G_H(\m{R})_i = H_i(\m{B}_{\bullet}^{cy, G}(N_H^G\m{R})).
$$
\end{defn}

The Hochschild homology for Green functors defined in \cite{BlGeHiLa} and recalled above is an equivariant analogue of taking classical Hochschild homology of a ring (i.e., a $\mathbb{Z}$-algebra). In this article, we also need an analogue of Hochschild homology of an associative $k$-algebra, for $k$ any commutative ring.

To define this analogue, we need the relative box product \cite{LeMa}, which is a particular case of the general construction of a relative tensor product in a monoidal category. Let $\m{R}$ be a commutative Green functor, and let $\m{L}$ and $\m{M}$ be left and right $\m{R}$-modules respectively. The Mackey functor $\m{L} \Box_{\m{R}} \m{M}$ is defined to be the coequalizer
$$\xymatrix{
\m{L} \Box \m{R} \Box \m{M} \ar@<1ex>[r]^{\rho \Box id}\ar@<-.5ex>[r]_{id \Box \lambda}  & \m{L} \Box \m{M} \ar[r] &\m{L} \Box_{\m{R}} \m{M}.  }
$$
Here $\rho$ is the right action map for $\m{L}$ and $\lambda$ the left action map for $\m{M}$. Lewis and Mandell prove in \cite{LeMa} that the category of left $\m{R}$-modules, Mod$_{\m{R}}$, is a closed symmetric monoidal abelian category with monoidal product $\Box_{\m{R}}$. The Green functor $\m{R}$ is commutative, so left modules become bimodules by using the same action on the right. We will call a monoid in this category an \emph{associative $\m{R}$-algebra}. We define Hochschild homology for associative $\m{R}$-algebras as above, but replacing $\Box$ with $\Box_{\m{R}}$. For $\m{T}$ an associative $\m{R}$-algebra for $G$, we denote its $G$-twisted Hochschild homology by $\m{\HH}^{\m{R},G}_*(\m{T})$. 

\subsection{$C_{p^n}$-twisted Hochschild homology of $\mathbb{Z}$}

Since a Green functor for the trivial group is just a ring, the theory of Hochschild homology for Green functors described above also yields new ring invariants. In \cite{BlGeHiLa}, the authors computed the $C_{p^n}$-twisted Hochschild homology of the ring ${\F}_p$.  Here we provide an additional explicit example, computing the $C_{p^n}$-twisted Hochschild homology of $\Z$.

\begin{exmp}\label{ex:compZ}
The ring $\mathbb{Z}$, considered as a Green functor for the trivial group, is the Burnside Mackey functor for the trivial group. The norm from $H$ to $G$ of the $H$-Burnside Mackey functor is the $G$-Burnside Mackey functor \cite{MThesis}, so $N_e^{C_p}{\Z}$ is the $C_p$-Burnside functor, $\m{A}^{C_p}$.

Since the Burnside functor is a unit for the box product, $\left(\m{A}^{C_p}\right)^{\square(q+1)}=\m{A}^{C_p}$ for all $q$, whence the
$C_p$-twisted cyclic bar construction on $\m{A}^{C_p}$ is $\m{A}^{C_p}$ in each degree.
The action of the Weyl group is trivial, so each face map is an isomorphism. Calculating the homology of the complex
\[ 0 \gets  \m{A}^{C_p} \xleftarrow{0} \m{A}^{C_p} \xleftarrow{\cong}  \m{A}^{C_p} \xleftarrow{0} \m{A}^{C_p} \xleftarrow{\cong} \m{A}^{C_p} \gets \cdots ,
 \]
we obtain
\[ \m{\HH}^{C_p}_{e}(\Z)_i = \begin{cases} \m{A}^{C_p} &: i=0 \\ 0 &: \text{otherwise.}\end{cases} \]

Likewise, for any $C_n \subset S^1$, $N_e^{C_{n}}\Z$ is the $C_{n}$-Burnside Mackey functor, $\m{A}^{C_{n}}$. Then, the same argument as above shows
\[ \m{\HH}^{C_{n}}_{e}(\Z)_i = \begin{cases} \m{A}^{C_{n}} &: i=0 \\ 0 &: \text{otherwise.}\end{cases} \]

\end{exmp}

\subsection{Relationship to classical and twisted THH}

As mentioned above, one motivation for studying Hochschild homology for Green functors is that it serves as an algebraic analogue of twisted topological Hochschild homology of equivariant spectra. Indeed, Blumberg, Gerhardt, Hill, and Lawson \cite{BlGeHiLa} prove that for $H \subset G \subset S^1$ finite subgroups and $R$ a (-1)-connected commutative $H$-ring spectrum, there is a linearization map
$$
\m{\pi}_k^G\THH_H(R) \to \m{\HH}^G_H(\m{\pi}_0^HR)_k,
$$
which is an isomorphism in degree zero. This is analogous to the classical linearization map from topological Hochschild homology to Hochschild homology in the non-equivariant setting. In Section \ref{sec:bokstedt}, we show that the relationship between relative THH and Hochschild homology for Green functors goes even deeper, by producing an equivariant version of the B\"okstedt spectral sequence. 

When $H$ is the trivial group, the linearization map above yields new trace maps from algebraic $K$-theory. Indeed, in \cite{BlGeHiLa} the authors prove that for $A$ a commutative ring and $G\subset S^1$ a finite group, there is a trace map
$$
K_q(A) \to \m{\HH}_e^G(A)(G/G)_q,
$$
which lifts the classical Dennis trace
$$
K_q(A) \to \HH_q(A). 
$$
The trace map $K_q(A) \to \m{\HH}_e^G(A)(G/G)_q$ factors through $\m{\pi}_q^G\THH(A)(G/G) \cong \pi_q(\THH(A)^G).$ As discussed in Section \ref{sec:relTHH}, the cyclotomic structure on THH yields maps
\[
R: \THH(A)^{C_{p^{n}}} \to \THH(A)^{C_{p^{n-1}}}
\]
called restriction maps, which can be used to define topological restriction homology: 
\[
\textup{TR}(A;p):= \holim_{\substack{\longleftarrow \\ R}} \THH(A)^{C_{p^{n}}}.
\]
Topological restriction homology gives rise in turn to topological cyclic homology. 

It is natural to ask whether the algebraic theory $\m{\HH}_e^G(A)(G/G)_q$ admits structure maps analogous to the restriction maps on THH. In \cite{BlGeHiLa}, the authors proved that Hochschild homology for Green functors does have a type of cyclotomic structure, which in particular yields maps
\[
r: \m{\HH}_e^{C_{p^n}}(A)_q(C_{p^n}/C_{p^n}) \to \m{\HH}_e^{C_{p^{n-1}}}(A)_q(C_{p^{n-1}}/C_{p^{n-1}}),
\]
the \emph{algebraic restriction maps}. One can then define an algebraic analogue of TR-theory, $tr$, given in degree $k$ by
\[
tr_k(A;p) := \lim_{\substack{\longleftarrow\\r}} \m{\HH}_{e}^{C_{p^{n}}}(A)_k(C_{p^{n}}/C_{p^{n}}).
\]
In \cite{BlGeHiLa} the authors showed that 
\[tr_k(\mathbb{F}_p;p) = \begin{cases} \Z_p &: k=0 \\ 0 &: \text{otherwise.} \end{cases}
\]
Below, we compute a second example, the algebraic TR of $\mathbb{Z}$. 

\begin{prop}
The algebraic $TR$-theory of $\Z$ is given by
\[ tr_{k}(\Z; p)=\begin{cases} \Z^{\infty} &: k=0 \\ 0 &: \text{otherwise.} \end{cases} \]
\end{prop}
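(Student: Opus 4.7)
The plan is to use Example \ref{ex:compZ} to reduce to a computation in degree zero, then pin down the algebraic restriction maps of \cite{BlGeHiLa} explicitly enough to evaluate the inverse limit. By Example \ref{ex:compZ}, $\m{\HH}_e^{C_{p^n}}(\Z)_k$ vanishes for $k \ne 0$, and since the algebraic restriction maps preserve homological degree, this immediately forces $tr_k(\Z;p)=0$ for $k\ne 0$. The problem therefore reduces to the tower in degree zero, whose $n$-th term is $\m{A}^{C_{p^n}}(C_{p^n}/C_{p^n}) = A(C_{p^n}) \cong \Z^{n+1}$, with canonical basis $\{[C_{p^n}/C_{p^i}] : 0\le i\le n\}$.

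Next, I would pin down the algebraic restriction map $r\colon A(C_{p^n})\to A(C_{p^{n-1}})$ on this basis. The cyclotomic structure constructed in \cite{BlGeHiLa} is built from a geometric $C_p$-fixed-points-type operation on the twisted cyclic bar complex $\m{B}^{cy,C_{p^n}}_\bullet(\m{A}^{C_{p^n}})$; on the Burnside Green functor, this operation should annihilate the free-orbit summand and reindex the remaining orbits, yielding
\[
r\bigl([C_{p^n}/e]\bigr) = 0 \qquad \text{and} \qquad r\bigl([C_{p^n}/C_{p^i}]\bigr) = [C_{p^{n-1}}/C_{p^{i-1}}] \quad \text{for } i \ge 1.
\]
Verifying this formula is the technical core of the argument and the main obstacle: it amounts to unwinding the cyclotomic construction of \cite{BlGeHiLa} in the Burnside case, where every box-product factor collapses back to $\m{A}^{C_{p^n}}$ (by the unit property established in Example \ref{ex:compZ}) and the Weyl action on $\m{A}^{C_{p^n}}$ is trivial, so the simplicial structure and the geometric-fixed-point quotient are as transparent as possible.

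Once this description of $r$ is in hand, the inverse limit is a bookkeeping exercise. Writing an element of $A(C_{p^n})$ as $\sum_{i=0}^n a_n^{(i)}[C_{p^n}/C_{p^i}]$, the compatibility $r(a_n)=a_{n-1}$ reads $a_{n-1}^{(i)} = a_n^{(i+1)}$ for $0 \le i < n$. Hence along each diagonal the values $a_n^{(n-k)}$ stabilize for $n\ge k$ to a single integer $b_k:=a_k^{(0)}$, and the sequence $(b_0,b_1,b_2,\ldots)$ can be chosen freely. This identifies the inverse limit with $\prod_{k\ge 0}\Z = \Z^\infty$, yielding $tr_0(\Z;p)\cong \Z^\infty$ and completing the proof.
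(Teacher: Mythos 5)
Your proposal is correct and takes essentially the same approach as the paper: reduce to degree zero via Example \ref{ex:compZ}, identify the algebraic restriction map as the quotient $\Z^{n+1}\to\Z^n$ coming from killing the image of $E\mathcal{F}_{C_p}\m{A}^{C_{p^n}}$ followed by the geometric-fixed-points isomorphisms of \cite{BlGeHiLa}, and take the inverse limit. Your explicit basis description of $r$ (annihilate $[C_{p^n}/e]$, send $[C_{p^n}/C_{p^i}]\mapsto [C_{p^{n-1}}/C_{p^{i-1}}]$ for $i\geq 1$) is exactly what the paper's quotient map does, so the step you flag as the ``main obstacle'' is in fact as transparent as you suspected.
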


\begin{proof} 

The algebraic restriction map,
\[ r:\m{\HH}_e^{C_{p^n}}(\Z)_k(C_{p^n}/C_{p^n}) \to \m{\HH}_e ^{C_{p^{n-1}}}(\Z)_k(C_{p^{n-1}}/C_{p^{n-1}}),\]
is constructed in \cite[Corollary 5.18]{BlGeHiLa} on the chain level as the composite
\begin{align*}
    \m{B}_k^{cy, C_{p^n}}\left(N_e^{C_{p^n}}\Z\right)(C_{p^n}/C_{p^n}) &\to \left(\m{B}_k^{cy, C_{p^n}}\left(N_e^{C_{p^n}}\Z\right) \square \left(\m{A}^{C_{p^n}}/E\mathcal{F}_{C_p} \left(\m{A}^{C_{p^n}}\right) \right)\right)(C_{p^n}/C_{p^n}) \\
    &\to \Phi^{C_p}\left(\m{B}_k^{cy, C_{p^n}}\left(N_e^{C_{p^n}}\Z\right)\right)(C_{p^{n-1}}/C_{p^{n-1}}) \\
    &\to \m{B}_k^{cy, C_{p^{n-1}}}\left(N_e^{C_{p^n}}\Z\right)(C_{p^{n-1}}/C_{p^{n-1}}),
\end{align*}
where ${E}\mathcal{F}_{C_p}(\m{A}^{C_{p^n}})$ is the subMackey functor of $\m{A}^{C_{p^n}}$ generated by $\m{A}(C_{p^n}/H)$ for all $H$ not containing $C_p$ \cite[Definition 5.4]{BlGeHiLa}. The first map is induced by the quotient map of Mackey functors $\m{A} \to \m{A}/E\mathcal{F}_{C_p}\left(\m{A}^{C_{p^n}}\right)$. 
The second and third maps are isomorphisms, due respectively to the definition of the geometric fixed points functor, $\Phi^{C_p}$, and the cyclotomic structure of the twisted Hochshild complex \cite[Definition 5.10, Proposition 5.17]{BlGeHiLa}.

Recall  from the computation of the $C_{p^n}$-twisted Hochschild homology of $\Z$ in Example \ref{ex:compZ} that $\m{B}_{k}^{cy, C_{p^n}}(N_e^{C_{p^n}}\Z)=\m{A}^{C_{p^n}}$ for all $k\geq 0$. As an abelian group, $\m{A}^{C_{p^n}}\left(C_{p^n}/C_{p^k} \right)  \cong \Z^{k+1}$. Here we choose the isomorphism so that the $i$th component of $\Z^{k+1}$ corresponds to the orbit $C_{p^k}/C_{p^{k-i}}$ for $0\leq i\leq k.$ Recall from Example \ref{ex:Burnside} that the transfer maps in the Burnside Mackey functor come from induction maps on finite sets. In the Burnside Mackey functor $\m{A}^{C_{p^n}}$ we have 
\begin{align*}
    tr_{C_{p^{k-1}}}^{C_{p^{k}}}\left(x_1, \ldots, x_k \right)=\left(0, x_1, \ldots, x_k \right).
\end{align*}
In degree zero, the first map in the composition above is the quotient of $\m{A}^{C_{p^n}}(C_{p^n}/C_{p^n})=\Z^{n+1}$ by the transfers from $e$, the only subgroup of $C_{p^n}$ not containing $C_p$ \cite[Definition 5.4]{BlGeHiLa}.  Using the formula for transfers given above, we see the image of these transfers is the last copy of $\Z$. 
So, the map induced on Hochschild homology by the composition is the map $\Z^{n+1} \to \Z^n$ quotienting by the last factor.

Taking the inverse limit of $\m{\HH}_{e}^{C_{p^n}}(\Z)_0$ along these maps, we obtain
\[ tr_{0}(\Z;p) = \Z^{\infty}. \]
For all other $k$, the groups $\m{\HH}_{e}^{C_{p^n}}(\Z)_k(C_{p^n}/C_{p^n})$ are zero and so $tr_{k}(\Z; p)=0$.
\end{proof}

\section{The equivariant B\"okstedt spectral sequence}\label{sec:bokstedt}

One of the key computational tools for classical topological Hochschild homology  is the B\"{o}kstedt spectral sequence. This spectral sequence, arising from the skeletal filtration of the simplicial spectrum $\THH(R)_{\bullet}$, computes the homology of THH($R$). In particular, for a ring spectrum $R$ and a field $k$, the B\"{o}kstedt spectral sequence has the form
$$
E^2_{*,*} = \HH^k_*(H_*(R;k)) \Rightarrow H_*(\THH(R);k),
$$
where $\HH^k_*$ denotes Hochschild homology over the field $k$. In this section, under appropriate flatness conditions we construct an equivariant analogue of the B\"{o}kstedt spectral sequence, which converges to the equivariant homology (integer or $RO(G)$-graded) of $G$-relative THH, for $G$ a finite subgroup of $S^1$:
$$
E_{s,\scalestar}^2=\m{\HH}_{s}^{\m{E}_\scalestar,G}(\m{E}_\scalestar(R))\Rightarrow \m{E}_{s+\scalestar}(i_{G}^*\THH_{G}(R)).
$$
Here $E$ is a commutative $G$-ring spectrum such that the generator $g = e^{2\pi i/|G|}$ of $G$ acts trivially on $E$ (see the discussion before Proposition \ref{prop:action_on_Mackey}), $R$ is a $G$-ring spectrum, and $\m{E}_{\scalestar}(R)$ denotes $\m{\pi}_{\scalestar}(E \wedge R)$. The $E^2$-term of this spectral sequence, which we construct in Section \ref{ss:specsequ},  is the Hochschild homology of an algebra over a graded Green functor. 

If $R$ is a commutative ring spectrum, the classical B\"{o}kstedt spectral sequence is a spectral sequence of algebras \cite{ekmm}. We prove  an analogous result in Section \ref{ss:specsequ} for the equivariant B\"{o}kstedt spectral sequence. Finally in Section \ref{ss:MUR}, we use this new spectral sequence to compute the equivariant homology of $\THH_{C_2}(MU_{\mathbb{R}}),$ the $C_2$-relative THH of the real bordism spectrum. 

To construct the equivariant B\"{o}kstedt spectral sequence, we first need to generalize the theory of Hochshild homology for Green functors. In \cite{BlGeHiLa} this theory is defined for (ungraded) Green functors, while we require a theory of Hochschild homology for integer- and $RO(G)$-graded Green functors, and for algebras over a graded commutative Green functor. We begin by setting up these theories.

\subsection{Graded twisted Hochschild homology}

We explain here how to  extend the work of \cite{BlGeHiLa} to the graded setting, in a manner analogous to the extension of ordinary Hochschild homology from rings to graded rings or even differential graded algebras, based on the framework elaborated in \cite{LeMa}.

Let $G\subset S^1$ be a finite subgroup. Let $\m{R}_*$ be a $\mathbb{Z}$-graded $G$-Green functor and let $\m{M}_*$ be an $\m{R}_*$-bimodule. We define the $G$-twisted cyclic nerve of $\m{R}_*$ with coefficients in ${}^g\m{M}_*$ as in Section \ref{sec:HHGreen}, but with a modification of the sign on the last face map. In particular, let 
$$
\m{B}_{q}^{cy, G}(\m{R}_*; {}^g\m{M}_*) = {}^g\m{M}_* \Box\m{R}_*^{\Box q}.
$$
For $1 \leq i \leq q-1$, the face map $d_i$ multiples the $i$th and $(i+1)$st box factors. The map $d_0$ applies the right action of $\m{R}_*$ on $\m{M}_*$. To define the last face map, $d_q$, we need the following isomorphism, which is analogous to the usual symmetry isomorphism for the tensor product of graded modules over a commutative ring.

\begin{defn}
Let $\m{A}_*$ and  $\m{B}_*$ be two $\mathbb{Z}$-graded Mackey functors.
The \emph{rotating isomorphism} $$\tau: \m{A}_*\sqr\m{B}_*\to \m{B}_*\sqr\m{A}_*$$ is defined in level $k$ to be the composite of the isomorphisms $\m{A}_i \sqr \m{B}_j \cong \m{B}_j \sqr \m{A}_i$ (where $k=i+j$) induced by the symmetry isomorphism of the tensor product of abelian groups, and the automorphism $\m{B}_j \sqr \m{A}_i \to \m{B}_j \sqr \m{A}_i$ induced by multiplication by $(-1)^{ij}$.
\end{defn}

The face map $d_q$ is then defined to be the composite
$$\xymatrix{
{}^g\m{M}_* \Box \m{R}_*^{\Box q} \ar[r]^-{\tau_q} &  \m{R}_* \Box {}^g\m{M}_* \Box \m{R}_*^{\Box(q-1)} \ar[rr]^{{}^g\lambda \Box \Id} && {}^g\m{M}_* \Box \m{R}_*^{\Box(q-1)}, 
}
$$
where the map $\tau_q$ is given by iterating the rotating isomorphism, bringing the last factor to the front. The $G$-twisted Hochschild homology $\m{\HH}^G_i(\m{R}_*; {}^g\m{M}_*)$ is then the homology of $\m{B}_{q}^{cy, G}(\m{R}_*; {}^g\m{M}_*)$, equipped with the face maps defined above. When $\m{M}_*=\m{R}_*$ and $g$ is the generator $e^{2\pi i/|G|}$ of $G$, we write $\m{\HH}^G_i(\m{R}_*) = \m{\HH}^G_i(\m{R}_*; {}^g\m{R}_*)$ This construction can be generalized easily to define $\m{\HH}^G_H(\m{R}_*)_i$ for a graded $H$-Green functor $\m{R}_*$, by incorporating norms as we did earlier.

For $RO(G)$-graded Mackey functors, the situation is more subtle. As above, in the graded case we will pick up an additional ``sign" in the last face map. When the gradings live in the representation ring $RO(G)$, however, these generalized signs are elements of the Burnside ring for $G$.

\begin{defn}
	Let $G\subset S^1$ be a finite subgroup, and let $\alpha, \beta$ be two finite-dimensional, real representations of $G$, with corresponding representation spheres $S^\alpha$ and $S^\beta$.  The switch map $S^\alpha\wedge S^\beta \to S^\beta\wedge S^\alpha$ specifies an element in the Burnside ring $A(G)\cong \pi^G_0(S^0)$, which we denote by $\sigma(\alpha, \beta)$.
\end{defn}

\begin{rem}
	The switch map for $G=C_p$ is computed, for instance, in \cite[Proposition 2.56, 2.58]{shulman2014equivariant}.
\end{rem}

\begin{defn}
Let $\m{A}_\scalestar$ and  $\m{B}_\scalestar$ be $RO(G)$-graded Mackey functors.
The \emph{rotating isomorphism} $\tau: \m{A}_\scalestar\sqr\m{B}_\scalestar\to \m{B}_\scalestar\sqr\m{A}_\scalestar$ is defined in level $\alpha$ to be the composite of the isomorphisms $\m{A}_\beta\sqr\m{B}_\gamma\cong \m{B}_\gamma\sqr\m{A}_\beta$ (where $\alpha = \beta + \gamma$) induced by the symmetry isomorphism of the tensor product of abelian groups, and of the automorphism $\m{B}_\gamma\sqr\m{A}_\beta\to \m{B}_\gamma\sqr\m{A}_\beta$ induced by $\sigma(\gamma, \beta)$.
\end{defn}

The rotating isomorphism enables us to define an appropriate notion of commutative Green functors in the $RO(G)$-graded case.

\begin{defn} An $RO(G)$-graded Green functor $\m A_\star$ with multiplication map $\mu: \m A_\star\sqr \m A_\scalestar \to \m A_\scalestar$  is \emph{commutative} if $\mu \tau= \mu$, where $\tau: \m{A}_\scalestar\sqr\m{A}_\scalestar\to \m{A}_\scalestar\sqr\m{A}_\scalestar$ is the rotating isomorphism. 
\end{defn}

Note that this definition of commutativity is compatible with that of commutative equivariant ring spectra, in the following sense.  

\begin{prop}\label{prop:commGreen}\cite[Theorem 5.1]{LeMa} If $R$ is a commutative ring $G$-spectrum, then $\m \pi_\scalestar^G (R)$ is a commutative $RO(G)$-graded Green functor. 
\end{prop}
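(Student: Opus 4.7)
The plan is to bootstrap from \fullref{lem:monoidal}. Since $\m\pi_\scalestar$ is already shown to be monoidal from genuine $G$-spectra to $RO(G)$-graded Mackey functors, the multiplication $\mu_R : R \wedge R \to R$ of a genuine ring $G$-spectrum $R$ immediately endows $\m\pi_\scalestar^G(R)$ with the structure of an associative $RO(G)$-graded Green functor. The only additional content is therefore to verify that when $\mu_R$ is symmetric with respect to the usual twist $\tau_{R,R} : R \wedge R \to R \wedge R$, the induced multiplication $\mu$ on $\m\pi_\scalestar^G(R)$ is symmetric with respect to the rotating isomorphism of the preceding definition, i.e., that $\mu\circ\tau=\mu$.

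First I would unpack what the monoidal structure on $\m\pi_\scalestar^G$ does to the symmetry isomorphism. On representatives, a homogeneous element of $\m\pi_\scalestar^G(R)\sqr\m\pi_\scalestar^G(R)$ in bidegree $(\beta,\gamma)$ is (after restriction to an orbit) of the form $a\otimes b$ with $a:S^\beta\wedge G/H_+\to R$ and $b:S^\gamma\wedge G/K_+\to R$. The monoidal coherence of $\m\pi_\scalestar^G$ sends this to the homotopy class of $S^{\beta+\gamma}\wedge G/H_+\wedge G/K_+\xrightarrow{a\wedge b} R\wedge R$, with the tacit identification $S^\beta\wedge S^\gamma \cong S^{\beta+\gamma}$. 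Second, the abelian-group level twist in the Day-convolution symmetry accounts for the factor $b\otimes a$, while the remaining discrepancy between $S^\gamma\wedge S^\beta$ and $S^\beta\wedge S^\gamma$ when re-identifying both sides with $S^{\beta+\gamma}$ is, by definition, the element $\sigma(\gamma,\beta)\in A(G)$. This is precisely the automorphism inserted into the definition of the rotating isomorphism $\tau$, so the natural square
\begin{equation*}
\begin{array}{ccc}
\m\pi_\scalestar^G(R)\sqr\m\pi_\scalestar^G(R) & \xrightarrow{\tau} & \m\pi_\scalestar^G(R)\sqr\m\pi_\scalestar^G(R)\\
\downarrow & & \downarrow\\
\m\pi_\scalestar^G(R\wedge R) & \xrightarrow{(\tau_{R,R})_*} & \m\pi_\scalestar^G(R\wedge R)
\end{array}
\end{equation*}
commutes, where the vertical maps come from the (lax) monoidal structure on $\m\pi_\scalestar^G$.

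Third, I would invoke the genuine commutativity of $R$, which states exactly that $\mu_R\circ \tau_{R,R}=\mu_R$ as maps of $G$-spectra. Applying $\m\pi_\scalestar^G$ and combining with the commutative square above yields $\mu\circ \tau=\mu$ as maps of $RO(G)$-graded Mackey functors, which is the required commutativity of the Green functor $\m\pi_\scalestar^G(R)$.

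The main obstacle is the second step: correctly identifying the sign $\sigma(\gamma,\beta)$ arising in the definition of $\tau$ with the discrepancy produced by the two possible identifications $S^\gamma\wedge S^\beta\cong S^{\beta+\gamma}\cong S^\beta\wedge S^\gamma$ under the monoidal functor $\m\pi_\scalestar^G$. This is a bookkeeping exercise in the construction of the Day convolution on $RO(G)$-graded Mackey functors together with the definition of the monoidal structure on $\m\pi_\scalestar^G$, and it is precisely the content of the cited result \cite[Theorem 5.1]{LeMa}; I would cite that theorem rather than redoing the verification by hand.
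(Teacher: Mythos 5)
Your proposal is correct and matches the paper's approach: the paper offers no proof beyond the citation to \cite[Theorem 5.1]{LeMa}, which you also identify as carrying the substantive content (lax symmetric monoidality of $\m\pi_\scalestar^G$ with respect to the rotating isomorphism). Your unpacking of the argument — transporting genuine commutativity of $R$ along the commuting square relating $\tau$ to $(\tau_{R,R})_*$ — is an accurate account of what that cited theorem provides.
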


\begin{defn}
	Let $G\subset S^1$ be a finite subgroup, let $g\in G,$ and let $\m{R}_\scalestar$ be an $RO(G)$-graded Green functor for $G$ and  $\m{M}_\scalestar$ an $\m{R}_\scalestar$-bimodule. The \emph { twisted cyclic nerve} of $\m{R}_\scalestar$ with coefficients in ${}^g\m{M}_\scalestar$ is the simplicial $RO(G)$-graded Mackey functor with $q$-simplices given by 
	$$[q]\mapsto \m{B}^{cy, G}_q(\m{R}_\scalestar;{}^g\m{M}_\scalestar)={}^g\m{M}_\scalestar\square\m{R}_\scalestar^{\square q}.$$
	For $1\leq i\leq q-1,$ the face map $d_i$ multiplies the $i$th and $(i + 1)$st box factors. The $0$th face map $d_0$ applies the right action of $\m{R}_{\scalestar}$ on $\m{M}_{\scalestar}$. The $q$th face map $d_q$ is given by the composite
	$${}^g\m{M}_\scalestar\square\m{R}_\scalestar^{\square q}\xrightarrow{\tau_q} 
	\m{R}_\scalestar\sqr{}^g\m{M}_\scalestar\square\m{R}_\scalestar^{\square (q-1)}\xrightarrow{{}^g\lambda\sqr\Id}
	{}^g\m{M}_\scalestar\square\m{R}_\scalestar^{\square (q-1)}$$
where ${}^g\lambda$ denotes the $G$-twisted left action of $\m R$ on $\m M$ (Definition \ref{twistaction}). The map $\tau_q$ is given by iterating the rotating isomorphism, moving the last factor to the front. For $0 \leq i \leq  q-1$, the degeneracy map $s_i$ is induced by the unit in the $(i+1)$st factor.
\end{defn}

As in the ungraded case, it is straightforward to verify that, with the sign introduced in the rotating map, the definition above indeed specifies a simplicial object. 

\begin{defn}
	Let $G\subset S^1$ be a finite subgroup, let $g\in G$, and let $\m{R}_\scalestar$ be an $RO(G)$-graded Green functor for $G$ and $\m{M}_\scalestar$ an $\m{R}_\scalestar$-bimodule. The \emph{twisted Hochschild homology of $\m R_\scalestar$ with coefficients in ${}^g\m{M}_\scalestar$}, $\m{\HH}^G_*(\m{R}_\scalestar; {}^g\m{M}_\scalestar)$, is the homology of the twisted cyclic nerve $\m{B}^{cy, G}_\bullet(\m{R}_\scalestar;{}^g\m{M}_\scalestar)$. When $\m M_\scalestar =\m R_\scalestar$ and $g$ is the generator $e^{2\pi i/|G|}$ of $G$, we write $\m{\HH}^G_*(\m{R}_\scalestar)=\m{\HH}^G_*(\m{R}_\scalestar; {}^g\m{R}_\scalestar)$.  
\end{defn}

\begin{rem}
	Let $\m{R}_\scalestar$ be an $RO(G)$-graded Green functor for $G$. When $\m{R}_\scalestar$ is concentrated in degree $0$, the twisted Hochschild homology of the $RO(G)$-graded Green functor $\m{R}_{\scalestar}$ is an $RO(G)$-graded Mackey functor concentrated in degree 0. The degree 0 part coincides with the twisted Hochschild homology of the ungraded Green functor $\m{R}_0$: 
	$$\m{\HH}^G_i(\m{R}_\scalestar)_0\cong \m{\HH}^G_i(\m{R}_0).$$
\end{rem}

Let $\m{E}_\scalestar$ be an $RO(G)$-graded commutative Green functor, and consider the symmetric monoidal category of $\m{E}_\scalestar$-algebras $(\Algcat{\m{E}_\scalestar},\sqr_{{\m{E}}_\scalestar}, \m{E}_\scalestar)$.  As in Section \ref{Hochschild}, we can define the Hochschild homology $\m{\HH}^{\m{E}_\scalestar,G}_i(\m{R}_\scalestar)$ of a $\m{E}_\scalestar$-algebra $\m{R}_\scalestar$ in terms of a relative graded box product.

\subsection{The equivariant B\"okstedt spectral sequence}\label{ss:specsequ}
Before constructing the equivariant B\"ok\-stedt spectral sequence, we briefly  review the classical case. The classical spectral sequence was originally constructed by B\"okstedt, who used it to compute the topological Hochschild homology of $\mathbb{F}_p$ and $\Z$ \cite{bokstedt}. A construction of the B\"okstedt spectral sequence for generalized homology theories in the context of EKMM spectra ($S$-modules) can be found in   \cite{ekmm}.

\begin{thm} \cite[IX, Theorem 2.9]{ekmm} 
	Let $E$ be a commutative ring spectrum, $R$ a ring spectrum, and $M$ a cellular $(R,R)$-bimodule. If $E_*(R)$ is $E_*$-flat, then there is a spectral sequence of the form 
	$$E^2_{p,q}=\HH^{E_*}_{p,q}(E_*(R);E_*(M))\Rightarrow E_{p+q}(\THH(R;M)).$$
\end{thm}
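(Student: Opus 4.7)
The plan is to follow the standard EKMM construction, modeling $\THH(R;M)$ as the geometric realization of the cyclic bar construction and filtering by skeleta. First I would recall that $\THH(R;M)\simeq |B^{cy}_\bullet(R;M)|$, where
$$B^{cy}_q(R;M) = M\wedge R^{\wedge q},$$
with face maps $d_i$ multiplying adjacent factors (and, for $d_q$, cyclically rotating the last factor to the front using the bimodule structure on $M$), and degeneracies given by inserting the unit. The geometric realization comes equipped with its skeletal filtration $F_p\THH(R;M)$ with associated graded pieces $\Sigma^p B^{cy}_p(R;M)\wedge(\Delta^p/\partial\Delta^p)$, up to the usual smash-cofibrancy issues handled in the EKMM framework by working with cellular $(R,R)$-bimodules and passing to a suitable cofibrant replacement so that all smash products are derived.

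Second, I would apply $E_*$ to this filtration to obtain an exact couple and hence a spectral sequence with
$$E^1_{p,q} = E_q(B^{cy}_p(R;M)) = E_q\bigl(M\wedge R^{\wedge p}\bigr),$$
and with $d^1$ the alternating sum of the maps induced by the face operators. Convergence is strong because the simplicial filtration is bounded below in each total degree: in a fixed bidegree the filtration is eventually constant, since the cellular hypotheses on $M$ together with connectivity of the simplicial filtration ensure a $\lim^1$-vanishing/Mittag–Leffler statement for $E$-homology (this is essentially the convergence argument in \cite[IX]{ekmm}).

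Third, I would use the flatness hypothesis $E_*(R)$ is $E_*$-flat to identify the $E^1$-page with the standard bar complex. A Künneth-type argument, iterated, gives a natural isomorphism
$$E_*(M\wedge R^{\wedge p}) \;\cong\; E_*(M)\otimes_{E_*} E_*(R)^{\otimes_{E_*} p},$$
compatibly with the face and degeneracy operators on both sides. Consequently the $E^1$-complex is (up to sign) the two-sided bar complex $B_\bullet(E_*(R);E_*(R);E_*(M))$, whose homology is by definition $\HH^{E_*}_{p,q}(E_*(R);E_*(M))$. This identifies the $E^2$-page as claimed.

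The main obstacle is the Künneth step: pushing $E_*$ through iterated smash products requires that at each stage the $E$-homology of the previous stage remain flat over $E_*$, so that one can apply flatness inductively. The flatness of $E_*(R)$ over $E_*$ gives $E_*(R^{\wedge p}) \cong E_*(R)^{\otimes p}$ by the classical Künneth spectral sequence collapse, and then one more application produces the factor $E_*(M)$. Care is needed here to ensure that $M$ is sufficiently cellular so that the smash products have the expected homotopy type and that the induced face and degeneracy maps on the Künneth identifications agree with the Hochschild differentials; this is precisely where the cellular bimodule hypothesis is used. Once this identification is in place, the rest of the argument — identifying the $E^2$-page with Hochschild homology and invoking the skeletal convergence — is formal.
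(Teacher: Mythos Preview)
Your proposal is correct and follows exactly the approach the paper indicates: the paper does not give its own proof of this cited result but simply observes that it is the special case of the general simplicial-filtration spectral sequence \cite[X, Theorem 2.9]{ekmm} applied to $K_\bullet = B^{cy}_\bullet(R;M)$, with the $E^2$-identification coming from the K\"unneth isomorphism under the flatness hypothesis. Your outline of the skeletal filtration, the resulting $E^1$-page, and the flatness/K\"unneth argument to reach the Hochschild complex is precisely that construction.
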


This spectral sequence is a special case of the following general theorem (\cite[X, Theorem 2.9]{ekmm}), which provides a spectral sequence for computing the homology of the geometric realization $\abs{K_\bullet}$ of a simplicial object $K_\bullet$ in spectra.   The construction is based on the simplicial filtration of $\abs{K_\bullet}.$ Recall that a simplicial spectrum $K_\bullet$ is proper if the map from the degenerate subspectrum, $sK_q \to K_q$, is a cofibration for each $q$.

\begin{thm}
\label{ekmmsimplicial}
	Let $K_\bullet$ be a proper simplicial spectrum and $E$ any spectrum. There is a natural homological spectral sequence  
	$$E^2_{p,q}=H_p(E_q(K_\bullet))\Rightarrow E_{p+q}(\abs{K_\bullet}).$$
\end{thm}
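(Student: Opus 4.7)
The plan is to construct the spectral sequence from the skeletal filtration of $|K_\bullet|$ and then identify its $E^2$-page. Write $F_p|K_\bullet|$ for the image of $\coprod_{q\le p} K_q\wedge\Delta^q_+$ in $|K_\bullet|$. This yields an exhaustive filtration
\[
\ast=F_{-1}|K_\bullet|\hookrightarrow F_0|K_\bullet|\hookrightarrow F_1|K_\bullet|\hookrightarrow \cdots \hookrightarrow |K_\bullet|.
\]
The hypothesis that $K_\bullet$ is proper ensures that each inclusion $F_{p-1}|K_\bullet|\hookrightarrow F_p|K_\bullet|$ is a cofibration of spectra, and that its cofiber can be identified, via collapsing the degenerate simplices and splitting off the boundary of $\Delta^p$, with a suspension of the non-degenerate part of $K_p$, namely $(K_p/s(K_{p-1}))\wedge S^p$, where $s(K_{p-1})$ denotes the subspectrum generated by degenerate simplices.

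Applying $E_*$ to the tower of cofibrations produces an unrolled exact couple, and hence a convergent spectral sequence in the standard way. Its $E^1$-term is
\[
E^1_{p,q} = E_{p+q}\bigl(F_p|K_\bullet|/F_{p-1}|K_\bullet|\bigr)\cong E_{p+q}\bigl((K_p/s(K_{p-1}))\wedge S^p\bigr)\cong E_q(K_p/s(K_{p-1})),
\]
and the $d^1$-differential is the usual alternating sum of (the maps induced on $E_q$ by) the face maps of $K_\bullet$, as one sees by chasing the connecting maps of the cofiber triangles. Consequently, the $E^1$-page computes the normalized chain complex of the simplicial abelian group $[p]\mapsto E_q(K_p)$, whose homology equals the homology of the unnormalized complex. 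Therefore
\[
E^2_{p,q}=H_p(E_q(K_\bullet)).
\]

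For convergence, the filtration is exhaustive and begins at $F_{-1}|K_\bullet|=\ast$, so the spectral sequence is a first-quadrant (in $p$) spectral sequence of the standard homotopy-colimit type. Strong convergence to $E_{p+q}(|K_\bullet|)$ follows from the usual argument that the filtration on $E_*(|K_\bullet|)$ induced by $\{F_p|K_\bullet|\}$ is exhaustive, Hausdorff, and complete in each bidegree, since for fixed total degree $n$ the filtration stabilizes.

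The main technical obstacle is the cofiber identification in the first step, which is where the properness of $K_\bullet$ enters; without properness the quotient $F_p/F_{p-1}$ need not be weakly equivalent to the non-degenerate part smashed with $S^p$, and the $d^1$ description breaks down. Once this is handled, the rest of the argument is the standard exact-couple machinery applied to a filtered spectrum, and is essentially formal.
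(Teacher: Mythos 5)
The paper does not prove this statement; it quotes it as \cite[X, Theorem 2.9]{ekmm} and merely remarks that the construction uses the simplicial filtration of $\abs{K_\bullet}$. Your proof is the standard skeletal-filtration argument and matches the EKMM approach: you filter $\abs{K_\bullet}$ by skeleta, use properness to identify $F_p/F_{p-1}\simeq (K_p/sK_{p-1})\wedge S^p$, pass to the exact couple in $E$-homology, identify the $E^1$-page and $d^1$ with the normalized chain complex of $[p]\mapsto E_q(K_p)$, and conclude by convergence of a bounded-below exhaustive filtration. This is correct and is essentially the same route as the cited source.
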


In this section we develop an equivariant version of the B\"okstedt spectral sequence to study the equivariant homology of twisted topological Hochschild homology. One can define both integer and $RO(G)$-graded versions of this equivariant B\"okstedt spectral sequence. However, the flatness conditions in the spectral sequence are more likely to hold in the $RO(G)$-graded case, so we focus our discussion on the $RO(G)$-graded spectral sequence. 

Let $E$ be a commutative $G$-spectrum and $R$ a $G$-spectrum. In order to construct the equivariant B\"okstedt spectral sequence converging to the $RO(G)$-graded homology $\m{E}_{\scalestar}(i_G^*\THH_G(R))$, we first need to understand how the $g$-twisting on $R$ interacts with equivariant  $E$-homology.

For a group $G$ and an element $g\in G$, we let $c_g$ denote the conjugation automorphism
$$
c_g: G \to G,
$$
where $c_g(h) = g^{-1}hg$. For a $G$-spectrum $X$, left multiplication by $g$ induces an isomorphism of $G$-spectra (see, for instance, \cite[Section 3.1]{schwede}):
\[
	l_g: c_g^* X \to X: x\mapsto gx.
\]
When $G$ is an abelian group, the conjugation map $c_g:G\to G$ is the identity for every $g\in G$ and the induced endofunctor $c_g^*$ on the category of $G$-spectra is the identity functor, whence $l_g: X \to X$
is  $G$-equivariant for every $G$-spectrum $X$ and every $g\in G$. We often denote this map simply by $g: X\to X$. When $l_g$ is equivariantly homotopic to the identity map, we say that $g$ acts trivially on $X$.

This general observation specializes to the following useful result.

\begin{prop}
\label{prop:action_on_Mackey}
	Let $G$ be an abelian group. For every $G$-spectrum $X$, the $G$-action on $X$ induces a levelwise  $G$-action on the graded Mackey functor $\m{\pi}_{\scalestar}(X).$
\end{prop}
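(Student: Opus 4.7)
The plan is to invoke directly the observation made in the paragraph preceding the proposition: because $G$ is abelian, the conjugation automorphism $c_g$ is the identity for every $g\in G$, so the map $l_g\colon R\to R$ given by left multiplication by $g$ is genuinely $G$-equivariant (not merely equivariant up to the twist by $c_g^*$). This is the crucial input; everything else is formal functoriality.

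First I would record that $\{l_g\}_{g\in G}$ defines a homomorphism $G\to \mathrm{Aut}_{G\mhyphen\mathrm{Sp}}(R)$ from $G$ to the group of self-equivalences of $R$ in the category of genuine $G$-spectra. Associativity of the action and the fact that $l_e=\Id_R$ give $l_g\circ l_h=l_{gh}$, so this is indeed a group homomorphism. Then I would apply the $RO(G)$-graded homotopy Mackey functor $\m{\pi}_\scalestar$, viewed as a functor from genuine $G$-spectra to $RO(G)$-graded Mackey functors. For each $\alpha\in RO(G)$ this produces a homomorphism
\[
G\longrightarrow \mathrm{Aut}_{\mathrm{Mack}_G}\bigl(\m{\pi}_\alpha(R)\bigr),\qquad g\longmapsto \m{\pi}_\alpha(l_g),
\]
which is the desired $G$-action on the Mackey functor $\m{\pi}_\alpha(R)$. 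Since this is defined separately at each grading $\alpha$, the action on $\m{\pi}_\scalestar(R)$ is levelwise in the sense required.

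Finally, I would verify that at each $\alpha\in RO(G)$ and each subgroup $H\subseteq G$, the induced endomorphism of the abelian group $\m{\pi}_\alpha(R)(G/H)=\pi_\alpha(R^H)$ is compatible with restriction and transfer maps, which is immediate from the naturality of these maps in the $G$-spectrum $R$ and from the fact that $l_g$ is a map of genuine $G$-spectra. The only non-obvious point is the $G$-equivariance of $l_g$, which is precisely where the abelian hypothesis enters, so there is no real obstacle beyond carefully spelling out the preceding paragraph's remark.
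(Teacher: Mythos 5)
Your proof is correct and is essentially the route the paper intends: the paragraph preceding the proposition sets up $l_g\colon c_g^* R \to R$ and observes that for abelian $G$ this is a genuine $G$-equivariant self-map, and you correctly deduce the proposition by functoriality of $\m{\pi}_\alpha$. The paper's printed proof takes a slightly different emphasis: rather than quoting $l_g$, it \emph{describes} the resulting action concretely, saying that for each $H\subseteq G$ and $\alpha\in RO(G)$ the action on $\m{\pi}_\alpha(R)(G/H)$ is the one obtained by projecting $G\to G/H = W_G(H)$ and then applying the Weyl group action that every Mackey functor carries at $G/H$. These two descriptions coincide --- the Weyl action on $\pi_\alpha(R^H)$ is precisely the residual action of $[g]\in G/H$ on $H$-fixed points, which is what $\m{\pi}_\alpha(l_g)$ computes --- but your phrasing makes the compatibility with restrictions and transfers automatic (it is a consequence of $l_g$ being a morphism of genuine $G$-spectra), whereas the paper's description leaves that compatibility implicit as a standard property of the Weyl action on Mackey functors. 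Both are fine; your version is a bit more explicitly functorial.
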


\begin{proof}
 The induced $G$-action on $RO(G)$-graded homotopy Mackey functors $\m{\pi}_\scalestar$ acts grading-wise via the quotient $G/H$, i.e., for each subgroup $H\subset G$ and each $\alpha$ in $RO(G)$, the $G$-action is specified by
$$G\times \m{\pi}_\alpha(X)(G/H)\to G/H\times \m{\pi}_\alpha(X)(G/H)\to \m{\pi}_\alpha(X)(G/H),$$
where the second map is the Weyl group action.    
\end{proof}

\begin{defn} Let $R$ be a $G$-ring spectrum and $g$ an element of $G$. The \emph{$g$-twisted $R$-module structure} on $R$, denoted ${}^gR$, has right action map given by the usual multiplication, $\mu$, and a twisted left action map, ${}^{g}\mu$, given by
$$
\xymatrix{R \wedge R \ar[d]_{g \wedge 1} \ar[dr]^{{}^{g}\mu} & \\
R \wedge R \ar[r]^{\mu} & R.}
$$
\end{defn}

For $E$ a commutative $G$-ring spectrum, and $R$ a $G$-ring spectrum, the $\m{E}_{\scalestar}(R)$-module $\m{E}_{\scalestar}({}^gR)$ will arise in the construction of the equivariant B\"okstedt spectral sequence. In the next lemma, we compare $\m{E}_{\scalestar}({}^gR)$, as an $\m{E}_{\scalestar}(R)$-module, to the twisted module ${}^g\m{E}_{\scalestar}(R)$, where the twisting is defined as in Definition \ref{twistaction}. 

\begin{lem}
\label{TrivialActionLemma} Let $G$ be an abelian group, and let $g\in G$.
 Let $E$ be a commutative $G$-ring spectrum such that $g$ acts trivially on $E$. For every $G$-ring spectrum $R$, the identity is a morphism of left $\m{E}_\scalestar(R)$-modules
 $$^g\m{E}_\scalestar(R) \cong \m{E}_\scalestar(^gR).$$
\end{lem}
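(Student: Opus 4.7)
The plan is to verify that the identity map of underlying Mackey functors is a map of left $\m{E}_\scalestar(R)$-modules. Because the spectrum ${}^gR$ has the same underlying $G$-spectrum as $R$ (only its $R$-bimodule structure is twisted), $E\sm {}^gR = E\sm R$ as $G$-spectra, and hence $\m{E}_\scalestar({}^gR)$ and ${}^g\m{E}_\scalestar(R)$ coincide as $RO(G)$-graded Mackey functors, both being $\m\pi_\scalestar(E\sm R)$. What remains is to show that the two prescribed left $\m{E}_\scalestar(R)$-actions on this common Mackey functor agree.

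First I would unpack both actions at the spectrum level. By \autoref{twistaction}, the action on ${}^g\m{E}_\scalestar(R)$ is $\mu\circ(g\cdot\sqr 1)$, where $\mu$ is the standard multiplication on $\m{E}_\scalestar(R)$ and $g\cdot$ denotes the $g$-action on $\m{E}_\scalestar(R)$. By the preceding proposition this latter is induced by the diagonal $G$-action $l_g^E\sm l_g^R$ on $E\sm R$, so the overall map is induced by the spectrum morphism
$$(\mu_E\sm\mu_R)\circ\mathrm{sw}\circ(l_g^E\sm l_g^R\sm 1_E\sm 1_R),$$
where $\mathrm{sw}$ is the swap isomorphism $(E\sm R)\sm(E\sm R)\to E\sm E\sm R\sm R$. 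The action on $\m{E}_\scalestar({}^gR)$, on the other hand, is by definition of ${}^gR$ induced by
$$(\mu_E\sm\mu_R)\circ\mathrm{sw}\circ(1_E\sm l_g^R\sm 1_E\sm 1_R).$$
These two composites differ only by precomposition with $l_g^E\sm 1_R\sm 1_E\sm 1_R$, so the question reduces to showing that this last map induces the identity on $\m\pi_\scalestar(E\sm R\sm E\sm R)$.

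The main obstacle is precisely this reduction, which is where the hypothesis that $g$ acts trivially on $\m{E}_\scalestar$ must do its work. By assumption $(l_g^E)_\scalestar = \mathrm{id}$ on $\m\pi_\scalestar(E)$, and I would propagate this to $(l_g^E\sm 1)_\scalestar=\mathrm{id}$ on $\m\pi_\scalestar(E\sm Y)$ for any $Y$ by invoking naturality together with the $E$-module structure on $E\sm Y$. More precisely, $l_g^E$ is a $G$-equivariant $E$-algebra map (since $G$ acts on the commutative $G$-ring spectrum $E$ by ring automorphisms and $G$ is abelian), so $l_g^E\sm 1_Y$ is an $E$-module endomorphism of $E\sm Y$; combined with the lax monoidality of $\m\pi_\scalestar$ (\autoref{lem:monoidal}), this identifies the induced map on $\m\pi_\scalestar(E\sm Y)=\m{E}_\scalestar(Y)$ with multiplication by the class $[l_g^E]=1\in \m{E}_\scalestar$, namely the identity. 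This is the technical heart of the argument and the step most likely to require a subtle flatness or cofibrancy input.
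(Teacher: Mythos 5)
Your overall strategy matches the paper's: observe that the underlying Mackey functors coincide, unpack the two module actions, and reduce to showing that an extra twist by $l_g^E$ on the $E$-factor is invisible after applying $\m\pi_\scalestar$. Where the paper simply asserts that the resulting diagram "clearly commutes" when $g$ acts trivially on $\m{E}_\scalestar$, you try to make the key step explicit --- but the argument you give for it is incorrect.

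The flaw is in the claim that $l_g^E:E\to E$ is an $E$-algebra map. It is not: $l_g^E$ is a $G$-equivariant \emph{ring} automorphism of $E$, but it is only $E$-\emph{semilinear} relative to itself, not $E$-linear. Indeed, the only $E$-algebra endomorphism of $E$ is the identity, so if $l_g^E$ were an $E$-algebra map you would already be assuming the conclusion at the spectrum level. Consequently, the deduction that $l_g^E\sm 1_Y$ is an $E$-module endomorphism of $E\sm Y$ does not follow, and neither does the identification with "multiplication by $[l_g^E]=1\in\m{E}_\scalestar$" --- which is problematic on its own terms, since $E$-module endomorphisms of $E\sm Y$ are not generally multiplications by elements of $\m{E}_\scalestar$ (that holds only for the rank-one free module $E$ itself). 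What is true is the weaker statement that, because $\mu_E$ is $G$-equivariant, $(l_g^E\sm 1_Y)_\scalestar$ is $(l_g^E)_\scalestar$-semilinear, and the hypothesis $(l_g^E)_\scalestar=\mathrm{id}$ then upgrades this to honest $\m{E}_\scalestar$-linearity; together with the fact that $(l_g^E\sm 1_Y)_\scalestar$ fixes the image of $\m\pi_\scalestar(\eta\sm 1_Y)$, this gives the identity on the $\m{E}_\scalestar$-submodule generated by that image. Whether that suffices in general (or whether one needs a flatness/Künneth condition or a cellularity argument) is exactly the subtlety you correctly flag at the end; your proposed route to closing it, however, does not work, and the paper's own proof does not elaborate on this step beyond asserting that the relevant diagram commutes.
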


\begin{proof} 
It suffices to check $E \sm {}^gR \simeq {}^g(E \sm R)$ as left $ E \sm R$-modules. 

By the assumption, the following diagram homotopy commutes:
\[
\begin{tikzcd}
(E\wedge R) \wedge (E \wedge R) \ar[r,"\Id"]\ar[d,"\mu\circ (g\wedge g\wedge \Id\wedge\Id)"] & (E\wedge R) \wedge (E \wedge R)\ar[d,"\mu\circ (\Id\wedge g\wedge \Id\wedge\Id)"]\\
E \wedge R\ar[r, "\Id"] & E \wedge R
\end{tikzcd}
\]
where the left vertical arrow is the module action of ${}^g(E \sm R)$ and the right one is that of $E \sm {}^gR$. The result follows after passing to homotopy Mackey functors.

\end{proof}

The construction of twisted topological Hochschild homology recalled in Section \ref{sec:relTHH} produces an $S^1$-spectrum. We restrict it to a $G$-spectrum and compute its associated $RO(G)$-graded homology. Viewed as a $G$-spectrum, the twisted topological Hochschild homology of a $G$-ring spectrum is isomorphic to the $G$-equivariant topological Hochschild homology over the twisted module ${}^gR$.

\begin{prop}
	For every $G$-ring spectrum $R$, there is a natural isomorphism of $G$-spectra:
	$$\THH(R;{}^gR) \cong i_G^*\THH_G(R).$$
\end{prop}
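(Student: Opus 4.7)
The plan is to produce a natural isomorphism of simplicial $G$-spectra
$$B^{cy,G}_\bullet(R) \xrightarrow{\;\cong\;} B^{cy}_\bullet(R;{}^gR),$$
and then pass to geometric realization, commuting with the change-of-universe functors that appear in the definition of $\THH_G(R)$. Writing $B^{cy,G}_q(R) = R \wedge R^{\wedge q}$, I would regard the leading smash factor as the bimodule ${}^gR$ via the identity on underlying smash powers, and match simplicial structures on each side.

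First I would verify that the face and degeneracy operators agree. For $0 < i < q$, the map $d_i = \Id^{\wedge i}\wedge \mu\wedge \Id^{\wedge(q-i-1)}$ is multiplication of adjacent factors on both sides and so agrees tautologically. The face $d_0 = \mu \wedge \Id^{\wedge(q-1)}$ is precisely the right action of $R$ on ${}^gR$, whose right module structure is the untwisted $\mu$. The face $d_q = (\mu\wedge \Id^{\wedge(q-1)})\circ \alpha_q$ first cyclically permutes the last factor to the front while multiplying the new first factor by $g$, then multiplies the first two factors; this is exactly the cyclic rotation followed by the twisted left action ${}^g\mu$ of $R$ on ${}^gR$, in accordance with the prescription of $B^{cy}_\bullet(R;{}^gR)$. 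The degeneracies insert the unit in matching positions. These identifications assemble into the desired isomorphism of simplicial spectra.

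Next I would check $G$-equivariance. Both simplicial objects carry the diagonal $G$-action on each level $R^{\wedge(q+1)}$. Inner faces and degeneracies are $G$-equivariant since $\mu$ and $\eta$ are $G$-equivariant. For $d_q$, the cyclic permutation preserves the diagonal action, so it suffices to see that composing with $g\wedge \Id$ before $\mu$ is equivariant, which reduces to checking that $h\circ g = g\circ h$ as self-maps of $R$ for every $h\in G$. This holds because $G\subset S^1$ is abelian, which is precisely the abelian/trivial-conjugation principle used earlier in the paper to equip $\m\pi_\scalestar(R)$ with a levelwise $G$-action.

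Finally, the point I would flag as the main content of the proof: the $S^1$-action on the realization produced by the $\Lambda_n^{op}$-structure of Bökstedt–Hsiang–Madsen restricts on $G\subset S^1$ to the simplicial action generated on level $q$ by $\alpha_q^{q+1}$. A direct computation shows $\alpha_q^{q+1}(x_0,\ldots,x_q) = (gx_0,gx_1,\ldots,gx_q)$, i.e., $\alpha_q^{q+1}$ realizes the diagonal $g$-action on $R^{\wedge(q+1)}$. Hence the $G$-action on $i_G^*\THH_G(R)$ coincides with the levelwise diagonal $G$-action that underlies $\THH(R;{}^gR)$ as a $G$-spectrum. Geometric realization of the simplicial isomorphism, together with the evident commutation with the change-of-universe functors $\mathcal I_{\widetilde U}^{\mathbb R^\infty}$ and $\mathcal I_{\mathbb R^\infty}^{U}$ in the definition of $\THH_G(R)$, then yields the stated natural isomorphism of $G$-spectra.
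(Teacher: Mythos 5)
Your proof is correct and follows essentially the same route as the paper: the identity map gives an isomorphism of simplicial $G$-spectra between the twisted cyclic bar construction and the ordinary cyclic nerve with coefficients in ${}^gR$, and geometric realization yields the result. Your final paragraph usefully makes explicit a point the paper leaves implicit in citing the analogous nonrelative statement of Angeltveit et al.: that the $\Lambda_n^{\mathrm{op}}$-induced $S^1$-action, restricted to $G$, agrees on each simplicial level with the diagonal $g$-action (via the computation $\alpha_q^{q+1} = g\wedge\cdots\wedge g$), so the two a priori distinct $G$-structures on the realization coincide.
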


\begin{proof}
	 The identity map induces an isomorphism of simplicial $G$-spectra between $B^{cy,G}(R)_\bullet$ and $B^{cy}(R;{}^gR)_\bullet$, the usual cyclic nerve of $R$ with coefficients in the $R$-biomodule ${}^gR$. Passing to the geometric realization, we obtain a $G$-spectrum isomorphism.
\end{proof}

Let $E$ be a commutative $G$-ring spectrum. The simplicial filtration of $B^{cy}(R;{}^gR)_\bullet$
$$\cdots F_{s-1}\subset F_s\subset F_{s+1}\subset \cdots\subset B^{cy}(R;{}^gR)_\bullet,$$ 
gives rise to a spectral sequence
 $$E^1_{s,\scalestar}=\m{E}_{s+\scalestar}(F_s/F_{s-1})\Rightarrow \m{E}_{s+\scalestar}(\THH(R;{}^gR)),$$
which is strongly convergent by \cite[X.2.9]{ekmm}.

Let $\alpha\in RO(G)$. The terms $E^1_{*,\alpha}$ form a chain complex with
$$\m{E}_{s+\alpha}(F_s/F_{s-1})\cong \m E_{s+\alpha}\Big(\Sigma^s\big(B^{cy}(R;{}^gR)_s/\sigma B^{cy}(R;{}^gR)_s\big)\Big), $$
in degree $s$, where $\sigma B^{cy}(R;{}^gR)_\bullet$ denotes the degeneracy subspectrum. 

It follows that  $E^1_{*,\alpha}$ is isomorphic to the normalized chain complex of $\m{E}_\alpha(B^{cy}(R;{}^gR)_\bullet).$

\begin{thm}\label{thm:ss}
	Let $G \subset S^1$ be a finite subgroup and $g= e^{2\pi i/|G|}$ a generator of $G$. Let $R$ be a $G$-ring spectrum and $E$ a commutative $G$-ring spectrum such that $g$ acts trivially on $E$. If $\m{E}_\scalestar(R)$ is flat over $\m{E}_\scalestar$, then there is an equivariant B\"okstedt spectral sequence of the following form:
	$$E_{s,\scalestar}^2=\m{\HH}_{s}^{\m{E}_\scalestar,G}(\m{E}_\scalestar(R))\Rightarrow \m{E}_{s+\scalestar}(i_G^*\THH_G(R)).$$
\end{thm}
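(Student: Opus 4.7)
The plan is to apply the general simplicial filtration spectral sequence (\autoref{ekmmsimplicial}) to an appropriate simplicial model for $i_G^*\THH_G(R)$, and then identify the $E^2$-term via a Künneth-type argument using the flatness hypothesis. As a preliminary step I would invoke the proposition preceding the theorem, which gives an isomorphism of $G$-spectra
$$i_G^*\THH_G(R)\cong \THH(R; {}^gR) = \lvert B^{cy}(R;{}^gR)_\bullet\rvert.$$
Applying the skeletal filtration $\cdots \subset F_{s-1}\subset F_s\subset \cdots$ of this geometric realization and smashing with $E$, \cite[X.2.9]{ekmm} yields a strongly convergent spectral sequence
$$E^1_{s,\scalestar}=\m{E}_{s+\scalestar}(F_s/F_{s-1})\Rightarrow \m{E}_{s+\scalestar}(\THH(R;{}^gR)),$$
with $E^1_{*,\alpha}$ isomorphic to the normalized chain complex associated to the simplicial $RO(G)$-graded Mackey functor $[q]\mapsto \m{E}_\scalestar({}^gR\sm R^{\sm q})$. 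Strong convergence is therefore inherited automatically from the EKMM spectral sequence for a proper simplicial spectrum, so the real work is in identifying the $E^2$-page.

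Next I would establish a Künneth-type isomorphism of $RO(G)$-graded Mackey functors
$$\m{E}_\scalestar({}^gR\sm R^{\sm q})\cong {}^g\m{E}_\scalestar(R)\,\sqr_{\m{E}_\scalestar}\,\m{E}_\scalestar(R)^{\sqr_{\m{E}_\scalestar} q}.$$
The monoidality of $\m{\pi}_\scalestar$ (\autoref{lem:monoidal}) makes $\m{E}_\scalestar(R)$ into an $RO(G)$-graded $\m{E}_\scalestar$-algebra, and repeated application of this monoidal structure, combined with the flatness hypothesis $\m{E}_\scalestar(R)$ over $\m{E}_\scalestar$, gives the desired isomorphism on iterated smash products (the equivariant analogue of the collapsing Künneth spectral sequence, written relative to $E$ by factoring $R^{\sm q}$ through $(E\sm R)^{\sm_E q}$). \autoref{TrivialActionLemma}, which uses crucially that $g$ acts trivially on $\m{E}_\scalestar$, then identifies the twisted left $\m{E}_\scalestar(R)$-module $\m{E}_\scalestar({}^gR)$ with ${}^g\m{E}_\scalestar(R)$.

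With the Künneth identification in place, I would verify that the face and degeneracy maps of the simplicial object $[q]\mapsto \m{E}_\scalestar({}^gR\sm R^{\sm q})$ correspond to those of the twisted cyclic nerve $\m{B}^{cy,G}_\bullet(\m{E}_\scalestar(R);{}^g\m{E}_\scalestar(R))$ over $\m{E}_\scalestar$ from the previous subsection. The inner face maps $d_i$ (for $1\leq i\leq q-1$) and the degeneracies are automatic from naturality of the Künneth isomorphism, and $d_0$ is the right $\m{E}_\scalestar(R)$-action. The last face map $d_q$, implemented on the topological side by the cyclic permutation composed with multiplication by $g$, must match the algebraic composite of the rotating isomorphism $\tau_q$ followed by ${}^g\lambda \sqr \Id$. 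Granting this compatibility, the homology of the $E^1$-complex is by definition $\m{\HH}^{\m{E}_\scalestar,G}_s(\m{E}_\scalestar(R))$, yielding the claimed $E^2$-page.

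The main obstacle I anticipate is precisely this last compatibility check for $d_q$: passing a representation-indexed factor past the others under the Künneth isomorphism must produce exactly the Burnside-ring-valued signs $\sigma(\alpha,\beta)$ built into the rotating isomorphism $\tau_q$. This is the content of \autoref{lem:monoidal}, but it requires careful bookkeeping of how the switch maps on representation spheres appear when moving smash factors. Once this matching of simplicial structures is secured, the spectral sequence and its identification with the twisted Hochschild homology of $\m{E}_\scalestar(R)$ follow, and the conclusion is complete.
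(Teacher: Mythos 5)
Your proposal follows exactly the same strategy as the paper's proof: identify $i_G^*\THH_G(R)$ with $\THH(R;{}^gR)$, run the EKMM simplicial-filtration spectral sequence of \cite[X.2.9]{ekmm} smashed with $E$, use flatness to get the level-wise Künneth isomorphism with the cyclic bar construction over $\m{E}_\scalestar$, apply \autoref{TrivialActionLemma} to identify $\m{E}_\scalestar({}^gR)$ with ${}^g\m{E}_\scalestar(R)$, and match the $d_1$ differential with that of the twisted Hochschild complex. You are in fact more explicit than the paper about the delicate point — the compatibility of the topological last face map with the rotating isomorphism and its Burnside-ring-valued switch elements — which the paper dispatches with the phrase ``formal diagram chasing.''
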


\begin{proof}
	 Since $E^1_{*,\alpha}$ is isomorphic to the normalized chain complex of $\m{E}_\alpha(B^{cy}(R;{}^gR)_\bullet)$, it suffices to compute the homology of this chain complex.
	
	Since $\m{E}_\scalestar(R)$ is flat over $\m{E}_\scalestar$, in degree $s$ the chain complex $\m{E}_\scalestar(B^{cy}(R;{}^gR)_\bullet)$ is isomorphic to
	$${B^{cy}_{\m{E}_\scalestar}}(\m{E}_\scalestar (R);\m{E}_\scalestar({}^gR)\big)_s,$$the $\m E_\scalestar$-module of $s$-simplices of the cyclic bar construction in the catgory of $\m E_\scalestar$-modules of $\m{E}_\scalestar (R)$ with coefficients in $\m{E}_\scalestar({}^gR)$.
By Lemma \ref{TrivialActionLemma}, it is also isomorphic as an $\m{E}_\scalestar$-module to 
	$${B^{cy}_{\m{E}_\scalestar}}(\m{E}_\scalestar (R);{}^g\m{E}_\scalestar(R)\big)_s.$$
	Formal diagram chasing shows that the $d_1$ differential of the spectral sequence under this isomorphism can be identified with the differential of the complex computing $\m{\HH}^{{\m{E}_\scalestar,G}}_*(\m{E}_\scalestar R).$ Therefore, we can identify the $E_2$-term of the  spectral sequence above with the  Hochschild homology $\m{\HH}_*^{\m{E}_\scalestar,G}(\m{E}_\scalestar(R)).$ This completes the proof.
\end{proof}

As in the classical case, when the input ring spectrum is actually commutative, the resulting spectral sequence inherits a multiplicative structure.

\begin{prop}\label{prop:algstructure} Let $g\in G$, $E$, and $R$ be as in Theorem \ref{thm:ss}.
 If $R$ is endowed with the structure of a commutative $G$-ring spectrum, then the equivariant B\"okstedt spectral sequence inherits the structure of a spectral sequence of $RO(G)$-graded algebras over $\m E_\scalestar$.
\end{prop}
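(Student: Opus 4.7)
The plan is to mimic the classical argument for multiplicativity of the B\"okstedt spectral sequence, adapting it to handle both the equivariance and the $g$-twist. First, I would show that when $R$ is a genuine commutative $G$-ring spectrum, the twisted cyclic bar construction $B^{cy,G}_\bullet(R)$ is a simplicial commutative $G$-ring spectrum. Each level $R^{\wedge (q+1)}$ inherits a commutative $G$-ring structure via componentwise multiplication, and the face maps $d_i$ for $0\le i < q$ are evidently maps of commutative ring spectra. The twisted face map $d_q = (\mu \wedge \Id^{\wedge(q-1)})\circ \alpha_q$ is also a ring map, because $\alpha_q$ factors as a cyclic permutation (a ring automorphism since $R$ is commutative) followed by the action of $g$ (a ring automorphism since $R$ is a $G$-ring spectrum).

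As a consequence, the geometric realization $|B^{cy,G}_\bullet(R)| \simeq i_G^*\THH_G(R)$ acquires the structure of a commutative $G$-ring spectrum, and its skeletal filtration $F_\bullet$ becomes multiplicative, in the sense that the multiplication restricts to pairings $F_p \wedge F_q \to F_{p+q}$ coming from the Eilenberg--Zilber/shuffle map for simplicial commutative monoids. Smashing with the commutative $G$-ring spectrum $E$ and applying $\m{\pi}_\scalestar$, which is monoidal by Lemma \ref{lem:monoidal}, these pairings endow the $E^1$-page with the structure of an $RO(G)$-graded differential algebra over $\m{E}_\scalestar$. Standard arguments for spectral sequences arising from multiplicative filtrations then propagate this structure to all subsequent pages.

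It then remains to identify the induced product on the $E^2$-page with the standard algebra structure on $\m{\HH}_*^{\m{E}_\scalestar, G}(\m{E}_\scalestar(R))$. Under the flatness hypothesis, the identification used in the proof of Theorem \ref{thm:ss} carries the topological shuffle pairing on $\m{E}_\scalestar(B^{cy,G}_\bullet(R))$ to the algebraic shuffle pairing on the cyclic bar complex ${B^{cy}_{\m{E}_\scalestar}}(\m{E}_\scalestar(R); {}^g\m{E}_\scalestar(R))_\bullet$. This latter shuffle product is, by construction, precisely the one inducing the algebra structure on the Hochschild homology of the commutative $RO(G)$-graded $\m{E}_\scalestar$-algebra $\m{E}_\scalestar(R)$, whose commutativity follows from Proposition \ref{prop:commGreen}.

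The main obstacle will be bookkeeping in the $RO(G)$-graded setting: the shuffle map must incorporate the generalized signs $\sigma(\alpha,\beta) \in A(G)$ that appear in the rotating isomorphism $\tau$, and one must verify that the topological switch maps on representation spheres match the algebraic signs after applying $\m{\pi}_\scalestar$. This is precisely where the commutativity of $R$ as a genuine $G$-ring spectrum, together with the monoidality of $\m{\pi}_\scalestar$, is essential: it guarantees both that the topological pairings descend compatibly through the spectral sequence and that the algebraic target $\m{\HH}_*^{\m{E}_\scalestar, G}(\m{E}_\scalestar(R))$ genuinely carries an algebra structure, rather than just a module structure, over $\m{E}_\scalestar$.
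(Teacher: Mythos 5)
Your proposal is correct and follows essentially the same route as the paper's proof: show the twisted cyclic bar construction is a simplicial commutative $G$-ring spectrum (with $d_q$ handled via the $g$-action being a ring map), smash with the commutative $G$-ring spectrum $E$, apply the monoidal functor $\m\pi_\scalestar$ to get a simplicial commutative $RO(G)$-graded Green functor over $\m E_\scalestar$, and then observe that the simplicial filtration respects the algebra structure. The additional remarks you make about shuffle pairings and tracking the Burnside-ring ``signs'' $\sigma(\alpha,\beta)$ are compatible elaborations of the same argument, not a genuinely different method.
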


\begin{proof} It is easy to check that if $R$ is a commutative $G$-ring spectrum, then the cyclic nerve $B^{cy}(R;{}^gR)_\bullet$ is a simplicial object in commutative ring $G$-spectra,  where the levelwise multiplication is defined as usual for a tensor product of algebras. Commutativity of $R$ is not required for the degeneracies to be algebra maps, but is necessary for the face maps to be algebra maps.  The particular case of $d_q$ in simplicial level $q$ relies on the fact that $g$ acts on $R$ as an algebra map. 

Since $E$ is also a commutative ring $G$-spectrum, $E\sm B^{cy}(R;{}^gR)_\bullet$ is a simplicial object in commutative ring $G$-spectra.  Proposition \ref{prop:commGreen} implies that $\m E_\scalestar \big (B^{cy}(R;{}^gR)_\bullet)$ is then a simplicial object in $RO(G)$-graded commutative Green functors over $\m E_\scalestar$.  Normalizing, we obtain a differential graded object in $RO(G)$-graded commutative Green functors over $\m E_\scalestar$; this is the $E^1$-page of the spectral sequence.  Since the simplicial filtration of $B^{cy}(R;{}^gR)_\bullet$ respects its algebra structure, all the differentials in the spectral sequence respect the multiplicative structure as well.
\end{proof}

\subsection{Twisted THH of the real bordism spectrum}\label{ss:MUR}
 In this section we consider $MU_{\mathbb{R}}$,  the $C_2$-equivariant real bordism spectrum of Landweber \cite{Landweber} and Fujii  \cite{Fujii}, for which it is natural to compute $C_2$-relative topological Hochschild homology. We compute the equivariant homology of THH$_{C_2}(MU_{\mathbb{R}})$ using the equivariant B\"okstedt spectral sequence constructed in Section \ref{sec:bokstedt}, converging to 
 $$
  \m{H}^{C_2}_{\scalestar}(i^*_{C_2} \THH_{C_2}(MU_{\mathbb{R}}); \m{\mathbb{F}}_2).
 $$

 It follows from the calculations of $\m{\pi}_{\scalestar}(H\m{\mathbb{F}}_2)$ in \cite{HK} that the Weyl action on $\m{\pi}_{\scalestar}(H\m{\mathbb{F}}_2)$ is trivial. In other words, the generator $g$ of $C_2$ induces the identity map after passing to $RO(C_2)$-graded Mackey functors. In particular, it induces the identity map on $RO(C_2)$-graded homotopy groups. Work by Hu--Kriz \cite{HK} also calculates the $C_2$-equivariant Steenrod algebra. The only element inducing the identity map in the $C_2$-equivariant Steenrod algebra is the unit $1$. Therefore $l_g$ is homotopic to the identity map,  allowing us to use the spectral sequence of Theorem \ref{thm:ss} to make this computation.

 As discussed in Section \ref{sec:bokstedt}, in order to identify the $E^2$-term of the equivariant B\"okstedt spectral sequence with Hochschild homology for Green functors, we need to verify a flatness condition, which turns out to hold in the $RO(C_2)$-graded case.  In particular we prove that $\m{\pi}_{\scalestar}^{C_2}(MU_{\mathbb{R}} \wedge H\m{\mathbb{F}}_2)$ is flat (indeed, free) as a $\m{\pi}_{\scalestar}(H\m{\mathbb{F}}_{2})$-module. For ease of notation, we let $H\m{\mathbb{F}}_{2\scalestar}$ denote $\m{\pi}_{\scalestar}(H\m{\mathbb{F}}_{2})$.
 
 \begin{prop}\label{prop:MUR} The $RO(C_2)$-graded Mackey functor $\m{\pi}_{\scalestar}^{C_2}(MU_{\mathbb{R}} \wedge H\m{\mathbb{F}}_2)$ is free as an $H\m{\mathbb{F}}_{2\scalestar}$-module. In particular
 $$
 \m{\pi}_{\scalestar}^{C_2}(MU_{\mathbb{R}} \wedge H\m{\mathbb{F}}_2)  \cong H\m{\mathbb{F}}_{2\scalestar}[b_1, b_2, \ldots].
 $$ 
Here the degree of $b_i$ is $i\rho$, where $\rho$ denotes the regular representation of $C_2$. 
 \begin{proof}
 Let $V$ be a representation of $C_2$. As in \cite{HHR}, we use the notation 
 $$
 S^0[S^V] = \bigvee_{j \geq 0} (S^V)^{\wedge j}
 $$
 for the free associative algebra on the $V$-sphere. As shown in Corollary 5.18 of Hill-Hopkins-Ravenel \cite{HHR}, building off of work of Araki \cite{Ar79}, there is a weak equivalence
 $$
 MU_{\mathbb{R}} \wedge H\m{\mathbb{F}}_2 \simeq H\m{\mathbb{F}}_2 \wedge \bigwedge_{i\geq 1} S^0[S^{i\rho}].
 $$
 This gives an isomorphism of $RO(C_2)$-graded Green functors
 $$
 \m{\pi}_{\scalestar}^{C_2}(MU_{\mathbb{R}} \wedge H\m{\mathbb{F}}_2)  \cong H\m{\mathbb{F}}_{2\scalestar}[b_1, b_2, \ldots],
 $$
 where the degree of $b_i$ is $i\rho$.
 \end{proof}
  \end{prop}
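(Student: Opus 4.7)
The plan is to reduce the computation to a known equivariant analogue of the classical splitting of $MU\wedge H\mathbb{F}_2$. Nonequivariantly, one has $H\mathbb{F}_2\wedge MU \simeq H\mathbb{F}_2 \wedge \bigwedge_{i\geq 1} S^0[S^{2i}]$, which immediately yields $H_*(MU;\mathbb{F}_2)\cong \mathbb{F}_2[b_1,b_2,\ldots]$ with $|b_i|=2i$. The Real bordism spectrum $MU_{\mathbb{R}}$ refines $MU$ to a genuine $C_2$-spectrum in such a way that the classical even-degree generators $b_i$ lift to classes in $RO(C_2)$-degree $i\rho$, where $\rho$ is the regular representation and $|\rho|=2$ underlies the usual integer grading.

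First, I would invoke the Araki-style splitting, which is the $C_2$-equivariant enhancement of the classical statement: there is an equivalence of $H\m{\mathbb{F}}_2$-modules
$$MU_{\mathbb{R}}\wedge H\m{\mathbb{F}}_2 \simeq H\m{\mathbb{F}}_2 \wedge \bigwedge_{i\geq 1} S^0[S^{i\rho}],$$
where $S^0[S^{i\rho}]=\bigvee_{j\geq 0}(S^{i\rho})^{\wedge j}$ is the free associative algebra on the representation sphere $S^{i\rho}$. This result is due in its original form to Araki and is recorded, in the form we need, in Hill--Hopkins--Ravenel's work on Real cobordism. Citing it directly is the key input.

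Second, I would take $RO(C_2)$-graded homotopy Mackey functors of both sides. Since the right-hand side is a wedge, indexed by multi-indices $J=(j_1,j_2,\ldots)$ with only finitely many nonzero entries, of suspensions $\Sigma^{\sum_i j_i\cdot i\rho} H\m{\mathbb{F}}_2$, and since $\m \pi_\scalestar$ sends wedges to direct sums, we obtain
$$\m{\pi}_{\scalestar}^{C_2}\bigl(MU_{\mathbb{R}}\wedge H\m{\mathbb{F}}_2\bigr) \cong \bigoplus_{J} \Sigma^{\sum_i j_i\cdot i\rho} H\m{\mathbb{F}}_{2\scalestar}.$$
Defining $b_i\in \m{\pi}_{i\rho}^{C_2}(MU_{\mathbb{R}}\wedge H\m{\mathbb{F}}_2)$ as the generator corresponding to the wedge summand $S^{i\rho}\subset S^0[S^{i\rho}]$, and using Lemma \ref{lem:monoidal} (monoidality of $\m \pi_\scalestar$) to identify the ring structure on the homotopy of a smash product with the tensor structure coming from the free algebra, we identify the right-hand side as the polynomial $RO(C_2)$-graded $H\m{\mathbb{F}}_{2\scalestar}$-algebra $H\m{\mathbb{F}}_{2\scalestar}[b_1,b_2,\ldots]$ with $|b_i|=i\rho$, which is evidently free as an $H\m{\mathbb{F}}_{2\scalestar}$-module.

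The only substantive obstacle is justifying the Araki splitting in the equivariant setting; once it is cited, the rest of the argument is a formal unpacking of wedge decompositions and the monoidality of equivariant homotopy. Everything else is essentially bookkeeping of representation-theoretic degrees.
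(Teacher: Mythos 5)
Your proposal is correct and follows essentially the same route as the paper: both rest on the Araki splitting $MU_{\mathbb{R}}\wedge H\m{\mathbb{F}}_2 \simeq H\m{\mathbb{F}}_2 \wedge \bigwedge_{i\geq 1} S^0[S^{i\rho}]$ from Hill--Hopkins--Ravenel, after which taking $\m\pi_\scalestar$ and using monoidality yields the polynomial $H\m{\mathbb{F}}_{2\scalestar}$-algebra with generators $b_i$ in degree $i\rho$. You merely spell out the wedge-sum bookkeeping that the paper leaves implicit.
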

  
As the appropriate flatness condition holds, the equivariant B\"okstedt spectral sequence for $MU_{\mathbb{R}}$ has the form
$$
E^2_{s,\scalestar} = \m{\HH}^{H\m{\mathbb{F}}_{2\scalestar}, C_2}_s(\m{H}^{C_2}_{\scalestar}(MU_{\mathbb{R}}; \m{\mathbb{F}}_2)) \Rightarrow \m{H}^{C_2}_{s+\scalestar}(i^*_{C_2} \THH_{C_2}(MU_{\mathbb{R}}); \m{\mathbb{F}}_2),
$$
where we are considering $\m{H}^{C_2}_{\scalestar}(MU_{\mathbb{R}}; \m{\mathbb{F}}_2)$ as an $H\m{\mathbb{F}}_{2\scalestar}$-algebra. From Proposition \ref{prop:MUR}, it follows that the $E^2$-term is 
$$
E^2_{*,\scalestar} = \m{\HH}^{H\m{\mathbb{F}}_{2\scalestar}, C_2}_*(H\m{\mathbb{F}}_{2\scalestar}[b_1, b_{2}, \ldots]).
$$

The next proposition is the key to computing this $E^2$-term.

\begin{prop}\label{tor}
Let $G \subset S^1$ be a finite subgroup, and let $g\in G$ be the
generator $e^{2\pi i/|G|}$. Let $\m{R}_{\scalestar}$ be an $RO(G)$-graded commutative Green functor for $G$, and $\m{M}_{\scalestar}$ an associative $\m{R}_{\scalestar}$-algebra. If $\m{M}_{\scalestar}$ is flat as an $\m{R}_{\scalestar}$-module, there is a natural
isomorphism
\[
\m{\HH}_*^{\m{R}_{\scalestar},G}(\m{M}_{\scalestar})\cong \m{\textup{Tor}}_{*, \scalestar}^{\m{M}_{\scalestar}\Box_{\m{R}_{\scalestar}}
  \m{M}_{\scalestar}^{op}}(\m{M}_{\scalestar},{}^{g}\m{M}_{\scalestar}),
\]
where $\m{\textup{Tor}}_{i, \scalestar}$ is the $i$th derived functor of the box product over $\m{R}_{\scalestar}$.  
\end{prop}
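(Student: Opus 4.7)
The plan is to mimic the standard non-equivariant identification $\HH_*(A/k) \cong \Tor^{A\otimes_k A^{op}}_*(A,A)$, replacing ordinary tensor products with the relative box product $\Box_{\m R_\scalestar}$ and accounting carefully for the $g$-twist. Write $\m M_\scalestar^e := \m M_\scalestar \Box_{\m R_\scalestar} \m M_\scalestar^{op}$ for the enveloping algebra, so that $\m M_\scalestar$-bimodules in the $\m R_\scalestar$-linear sense are the same as left $\m M_\scalestar^e$-modules.

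First, I would construct the two-sided bar resolution
\[
\m B_q(\m M_\scalestar, \m R_\scalestar, \m M_\scalestar) \;=\; \m M_\scalestar \Box_{\m R_\scalestar} \m M_\scalestar^{\Box_{\m R_\scalestar} q} \Box_{\m R_\scalestar} \m M_\scalestar,
\]
with the usual simplicial face maps multiplying adjacent factors and degeneracies inserting units. This is a simplicial $\m M_\scalestar^e$-module, and the multiplication map yields an augmentation $\m B_\bullet(\m M_\scalestar, \m R_\scalestar, \m M_\scalestar) \to \m M_\scalestar$. By the standard extra-degeneracy argument (transcribed into the symmetric monoidal category $(\Algcat{\m R_\scalestar}, \Box_{\m R_\scalestar}, \m R_\scalestar)$), this augmentation is a simplicial homotopy equivalence after forgetting down to $\m R_\scalestar$-modules, and hence induces a quasi-isomorphism on normalized chain complexes.

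Next, I would use the flatness hypothesis to show that the bar resolution consists of flat (hence $\Tor$-acyclic) $\m M_\scalestar^e$-modules. Indeed, for each $q \geq 0$,
\[
\m B_q(\m M_\scalestar, \m R_\scalestar, \m M_\scalestar) \;\cong\; \m M_\scalestar^e \Box_{\m R_\scalestar \Box \m R_\scalestar^{op}} \bigl(\m M_\scalestar^{\Box_{\m R_\scalestar} q}\bigr),
\]
so flatness of $\m M_\scalestar$ over $\m R_\scalestar$ forces the iterated box product $\m M_\scalestar^{\Box_{\m R_\scalestar} q}$ to be flat over $\m R_\scalestar$, hence $\m B_q$ is flat over $\m M_\scalestar^e$. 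Consequently
\[
\m{\textup{Tor}}_{*, \scalestar}^{\m M_\scalestar^e}(\m M_\scalestar, {}^g\m M_\scalestar) \;\cong\; H_*\bigl({}^g\m M_\scalestar \Box_{\m M_\scalestar^e} \m B_\bullet(\m M_\scalestar, \m R_\scalestar, \m M_\scalestar)\bigr).
\]

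Finally, I would identify the right-hand complex with the $g$-twisted cyclic nerve. For each $q$, the natural map
\[
{}^g\m M_\scalestar \Box_{\m M_\scalestar^e} \bigl(\m M_\scalestar \Box_{\m R_\scalestar} \m M_\scalestar^{\Box_{\m R_\scalestar} q} \Box_{\m R_\scalestar} \m M_\scalestar\bigr) \;\xrightarrow{\;\cong\;}\; {}^g\m M_\scalestar \Box_{\m R_\scalestar} \m M_\scalestar^{\Box_{\m R_\scalestar} q}
\]
collapses the outer two $\m M_\scalestar$ factors against ${}^g\m M_\scalestar$: the right factor is absorbed by the ordinary right action, while the left factor is absorbed using the twisted left action ${}^g\lambda$ of Definition \ref{twistaction}. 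Under this identification, the inner face maps $d_1,\ldots,d_{q-1}$ coming from adjacent multiplications in the bar complex match exactly the corresponding face maps of $\m B^{cy,G}_\bullet(\m M_\scalestar)$; the face map $d_0$ coming from the action on the right end becomes the right module action in the cyclic nerve; and the face map $d_q$ coming from the action on the left end, after the $g$-twist has been absorbed into ${}^g\m M_\scalestar$, is precisely the composite $\tau_q$ followed by ${}^g\lambda \Box \Id$ that defines the last face map of $\m B^{cy,G}_\bullet(\m M_\scalestar)$. Therefore
\[
{}^g\m M_\scalestar \Box_{\m M_\scalestar^e} \m B_\bullet(\m M_\scalestar, \m R_\scalestar, \m M_\scalestar) \;\cong\; \m B^{cy,G}_\bullet(\m M_\scalestar),
\]
and taking homology delivers the claimed isomorphism.

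The main obstacle I expect is the final bookkeeping in the $RO(G)$-graded setting: making sure that the rotating isomorphism $\tau_q$, with its Burnside-ring sign $\sigma(\alpha,\beta)$, appears exactly where the cyclic permutation arises from the tensor-product identification over $\m M_\scalestar^e$. This is essentially the observation that when one ``wraps'' the left factor of the bar complex around to combine with the right factor (as is needed to convert the two-sided bar complex into a cyclic object), the Koszul-type sign that appears is precisely the $RO(G)$-graded sign built into the definition of $\m{\HH}^{G}_*$, and that this sign is correctly twisted by $g$.
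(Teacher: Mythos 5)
Your proposal is correct and fills in exactly the details behind what the paper's own proof cites as ``the usual homological algebra argument'' (after reducing the twisted theory to ordinary Hochschild homology with coefficients in the $\m{R}_\scalestar$-relative bimodule ${}^g\m{M}_\scalestar$), so it is essentially the same approach made explicit. One small notational slip: the base-change identification in the flatness step should read $\m{B}_q \cong \m{M}_\scalestar^e \Box_{\m{R}_\scalestar}\bigl(\m{M}_\scalestar^{\Box_{\m{R}_\scalestar} q}\bigr)$, i.e., extension of scalars along $\m{R}_\scalestar \to \m{M}_\scalestar^e$ (the Mackey-functor analogue of $A^{\otimes(q+2)} \cong A^e \otimes_k A^{\otimes q}$), not a box product over $\m{R}_\scalestar \Box \m{R}_\scalestar^{op}$.
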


  \begin{proof}
  Note that $\m{\HH}_*^{\m{R}_{\scalestar},G}(\m{M}_{\scalestar}) = \m{\HH}_*^{\m{R}_{\scalestar}}(\m{M}_{\scalestar}, \gmM_{\scalestar})$, where the latter is the homology of the ordinary Hochschild complex with bimodule coefficients. By \cite{LeMa} and \cite{Lewis}, since $G$ is a finite group,  the homological behavior of graded $G$-Mackey functors is standard, so that the usual homological algebra argument implies that 
  $$
  \m{\HH}_*^{\m{R}_{\scalestar}}(\m{M}_{\scalestar}, \gmM_{\scalestar}) \cong \m{\textup{Tor}}_{*, \scalestar}^{\m{M}_{\scalestar}\Box_{\m{R}_{\scalestar}}
  \m{M}_{\scalestar}^{op}}(\m{M}_{\scalestar},{}^{g}\m{M}_{\scalestar}).
  $$
 \end{proof}

We now apply the proposition to computing $\m{\HH}^{H\m{\mathbb{F}}_{2\scalestar},C_2}_*(\m{H}^{C_2}_{\scalestar}(MU_{\mathbb{R}}; \m{\mathbb{F}}_2))$. 

\begin{prop}\label{prop:E2}
  The $C_2$-twisted Hochschild homology of the $H\m{\mathbb{F}}_{2\scalestar}$-algebra $\m{H}^{C_2}_{\scalestar}(MU_{\mathbb{R}}; \m{\mathbb{F}}_2)$ is
  $$
 \m{\HH}^{H\m{\mathbb{F}}_{2\scalestar},C_2}_*( \m{H}^{C_2}_{\scalestar}(MU_{\mathbb{R}}; \m{\mathbb{F}}_2)) \cong H\m{\mathbb{F}}_{2\scalestar}[b_1, b_2, \ldots] \hspace{.1cm} \Box_{H\m{\mathbb{F}}_{2\scalestar}} \hspace{.1cm} \Lambda_{H\m{\mathbb{F}}_{2\scalestar}}(z_1, z_2, \ldots),
  $$
  where $|b_i| = (0,i\rho)$ and $|z_i| = (1, i\rho)$.
  \end{prop}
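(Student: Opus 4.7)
The plan is first to use Proposition \ref{tor} to reformulate the Hochschild homology computation as a Tor computation, and then to evaluate that Tor group via a Koszul resolution, in the spirit of the classical Hochschild--Kostant--Rosenberg theorem for polynomial algebras.

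Set $\m M_\scalestar := \m{H}^{C_2}_{\scalestar}(MU_{\mathbb{R}}; \m{\mathbb{F}}_2) \cong H\m{\mathbb{F}}_{2\scalestar}[b_1, b_2, \ldots]$.  By Proposition \ref{prop:MUR}, $\m M_\scalestar$ is free, and hence flat, over $H\m{\mathbb{F}}_{2\scalestar}$, so Proposition \ref{tor} yields a natural isomorphism
$$\m{\HH}^{H\m{\mathbb{F}}_{2\scalestar},C_2}_*(\m M_\scalestar) \cong \m{\textup{Tor}}^{\m M_\scalestar^e}_{*,\scalestar}(\m M_\scalestar, {}^g\m M_\scalestar),$$
where $\m M_\scalestar^e := \m M_\scalestar \Box_{H\m{\mathbb{F}}_{2\scalestar}} \m M_\scalestar^{op}$.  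The Hill--Hopkins--Ravenel equivalence underlying Proposition \ref{prop:MUR} is one of $C_2$-ring spectra, so the induced $g$-twist on $\m M_\scalestar$ fixes each polynomial generator $b_i$.  Consequently ${}^g\m M_\scalestar \cong \m M_\scalestar$ as an $\m M_\scalestar^e$-module, and the Tor computation reduces to the untwisted one.

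Next, identify $\m M_\scalestar^e \cong H\m{\mathbb{F}}_{2\scalestar}[b_i, b_i' : i \geq 1]$, where $b_i' := 1 \Box b_i$, so that the kernel of the multiplication map $\mu: \m M_\scalestar^e \to \m M_\scalestar$ is generated by the sequence $\{b_i - b_i'\}_{i \geq 1}$.  Since each $b_i - b_i'$ is a nonzerodivisor modulo the ideal generated by its predecessors, there is a Koszul-type free resolution
$$\m M_\scalestar^e \Box_{H\m{\mathbb{F}}_{2\scalestar}} \Lambda_{H\m{\mathbb{F}}_{2\scalestar}}(z_1, z_2, \ldots) \xrightarrow{\simeq} \m M_\scalestar,$$
where $|z_i| = (1, i\rho)$ and $\partial z_i = b_i - b_i'$.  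Applying $- \Box_{\m M_\scalestar^e} \m M_\scalestar$ to this resolution sends each $b_i - b_i'$ to zero, so every differential in the resulting complex vanishes, and we read off
$$\m{\HH}^{H\m{\mathbb{F}}_{2\scalestar},C_2}_*(\m M_\scalestar) \cong \m M_\scalestar \Box_{H\m{\mathbb{F}}_{2\scalestar}} \Lambda_{H\m{\mathbb{F}}_{2\scalestar}}(z_1, z_2, \ldots).$$

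The main technical obstacle lies in justifying the Koszul resolution rigorously in the $RO(C_2)$-graded Green functor setting: one must verify acyclicity of the Koszul complex for a countable regular sequence in the Mackey-functor-valued polynomial algebra $\m M_\scalestar^e$, track the signs introduced by the rotating isomorphism when defining the exterior differential, and confirm that the bigrading indeed places $z_i$ in degree $(1, i\rho)$.  These steps should be manageable, since $H\m{\mathbb{F}}_{2\scalestar}$ is $RO(C_2)$-graded commutative in the appropriate sense, so the verification reduces to a careful adaptation of the standard Koszul argument over a graded-commutative ring.
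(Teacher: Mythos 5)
Your proof is correct and arrives at the same answer, but it takes a slightly different route from the paper at the Tor step. The paper first applies the Cartan--Eilenberg change-of-rings argument to reduce $\m{\textup{Tor}}^{\m{M}_{\scalestar}^e}_{*,\scalestar}(\m{M}_{\scalestar},\m{M}_{\scalestar})$ to $\m{M}_{\scalestar}\Box_{H\m{\mathbb{F}}_{2\scalestar}}\m{\textup{Tor}}^{\m{M}_{\scalestar}}_{*,\scalestar}(H\m{\mathbb{F}}_{2\scalestar},H\m{\mathbb{F}}_{2\scalestar})$ and only then uses a Koszul resolution of $H\m{\mathbb{F}}_{2\scalestar}$ over the polynomial algebra $\m{M}_{\scalestar}$ on the generators $b_i$. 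You instead construct the Koszul resolution of $\m{M}_{\scalestar}$ directly as an $\m{M}_{\scalestar}^e$-module using the elements $b_i - b_i'$, and then apply $-\Box_{\m{M}_{\scalestar}^e}\m{M}_{\scalestar}$. These are standard equivalent routes in the HKR-type computation; your version avoids invoking the Cartan--Eilenberg isomorphism explicitly, at the cost of having to verify regularity of the sequence $\{b_i - b_i'\}$ in the enveloping algebra rather than the simpler sequence $\{b_i\}$ in $\m{M}_{\scalestar}$ itself. You are right to flag that the Koszul acyclicity needs checking in the Mackey-functor setting; the paper also leaves this to the reader, pointing to \cite{LeMa} for the homological-algebra foundations.

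One point worth tightening: your justification that ${}^g\m{M}_{\scalestar}\cong\m{M}_{\scalestar}$ is a bit too quick. Saying ``the Hill--Hopkins--Ravenel equivalence is one of $C_2$-ring spectra'' does not by itself say anything about the Weyl action on the homotopy generators; a $C_2$-equivalence respects the genuine $C_2$-structure, which is a separate piece of data from the $g$-action used in the twist (the latter is the intrinsic Weyl action of Lemma \ref{TrivialActionLemma}). The paper's argument is the one you want: the $g$-action on $H\m{\mathbb{F}}_{2\scalestar}$ is trivial by the computation in \cite{HK}, and the $g$-action on each $b_i$ is induced by the $g$-action on the sphere $S^{i\rho}$, which fixes the class of $b_i$ in homotopy. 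With that fix, your argument is sound.
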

  
  \begin{proof}
  For ease of notation, let $\m{M}_{\scalestar}$ denote $\m{H}^{C_2}_{\scalestar}(MU_{\mathbb{R}}; \m{\mathbb{F}}_2) \cong H\m{\mathbb{F}}_{2\scalestar}[b_1, b_2, \ldots],$ 
$|b_i|= i\rho$, and  let $\m{M}_{\scalestar}^e$ denote $\m{M}_{\scalestar}\Box_{H\m{\mathbb{F}}_{2\scalestar}}
  \m{M}_{\scalestar}^{op}$. By Proposition \ref{tor} above, 
  $$\m{\HH}_*^{H\m{\mathbb{F}}_{2\scalestar},C_2}(\m{M}_{\scalestar}) \cong \m{\textup{Tor}}_{*, \scalestar}^{\m{M}_{\scalestar}^e}(\m{M}_{\scalestar},{}^{g}\m{M}_{\scalestar}),
  $$ 
  where $g$ denotes a generator of $C_2$. 
  
We first consider the $C_2$-action on $\m{M}_{\scalestar}$. As noted earlier, the $C_2$-action on $H\m{\mathbb{F}}_{2\scalestar}$ is trivial. The $C_2$-action on the generator $b_i$ is induced by the action on $S^{i\rho}$, whence  $ g \cdot b_i = b_i$. We conclude that the $C_2$-action on $\m{M}_{\scalestar}$ is trivial, so that 
$$\m{\HH}_*^{H\m{\mathbb{F}}_{2\scalestar},C_2}(\m{M}_{\scalestar}) \cong \m{\textup{Tor}}_{*, \scalestar}^{\m{M}_{\scalestar}^e}(\m{M}_{\scalestar},\m{M}_{\scalestar}).
  $$
  
Using the homological algebra foundations for $RO(C_2)$-graded Mackey functors laid out in \cite{LeMa}, the standard argument of Cartan and Eilenberg \cite[Thm X.6.1]{CaEi} shows that 
  \[
  \m{\textup{Tor}}_{*, \scalestar}^{\m{M}_{\scalestar}^e}(\m{M}_{\scalestar},\m{M}_{\scalestar}) \cong \m{M}_{\scalestar} \Box_{H\m{\mathbb{F}}_{2\scalestar}} \hspace{.1cm} \m{\textup{Tor}}_{*, \scalestar}^{\m{M}_{\scalestar}}(H\m{\mathbb{F}}_{2\scalestar}, H\m{\mathbb{F}}_{2\scalestar}).
  \]
  Using the Koszul complex as in the classical case, one can compute that 
  \[
  \m{\textup{Tor}}_{*, \scalestar}^{H\m{\mathbb{F}}_{2\scalestar}[b_1, b_2, \ldots]}(H\m{\mathbb{F}}_{2\scalestar}, H\m{\mathbb{F}}_{2\scalestar}) \cong \Lambda_{H\m{\mathbb{F}}_{2\scalestar}}(z_1, z_2, \ldots), 
  \]
  where $|z_i| = (1, |b_i|)$. Thus we conclude that 
  \[
  \m{\textup{Tor}}_{*, \scalestar}^{\m{M}_{\scalestar}^e}(\m{M}_{\scalestar},\m{M}_{\scalestar}) \cong H\m{\mathbb{F}}_{2\scalestar}[b_1, b_2, \ldots] \hspace{.1cm} \Box_{H\m{\mathbb{F}}_{2\scalestar}} \hspace{.1cm} \Lambda_{H\m{\mathbb{F}}_{2\scalestar}}(z_1, z_2, \ldots).
  \]
 where $|b_i| = (0,i\rho)$ and $|z_i| = (1, i\rho)$.

  \end{proof}

We now compute the equivariant homology of $\THH_{C_2}(MU_{\mathbb{R}}).$
\begin{thm}\label{thm:MUR}
The $RO(C_2)$-graded equivariant homology of $\THH_{C_2}(MU_{\mathbb{R}})$ is 
$$
\m{H}^{C_2}_{\scalestar}(i^*_{C_2} \THH_{C_2}(MU_{\mathbb{R}}); \m{\mathbb{F}}_2) \cong H\m{\mathbb{F}}_{2\scalestar}[b_1, b_2, \ldots]  \otimes_{\mathbb{F}_{2}}  \Lambda_{\mathbb{F}_{2}}(z_1, z_2, \ldots)
$$
as an $H\m{\mathbb{F}}_{2\scalestar}$-module. Here $|b_i| = i\rho $ and $|z_i| = 1+i\rho.$
\begin{proof}
We use the equivariant B\"okstedt spectral sequence of Theorem \ref{thm:ss}:
$$
E^2_{s,\scalestar} = \m{\HH}^{H\m{\mathbb{F}}_{2\scalestar}, C_2}_s(\m{H}^{C_2}_{\scalestar}(MU_{\mathbb{R}}; \m{\mathbb{F}}_2)) \Rightarrow \m{H}^{C_2}_{s+\scalestar}(i^*_{C_2} \THH_{C_2}(MU_{\mathbb{R}}); \m{\mathbb{F}}_2).
$$
By Proposition \ref{prop:E2}, the $E^2$-term of this spectral sequence is:
$$
E^2_{*, \scalestar} = H\m{\mathbb{F}}_{2\scalestar}[b_1, b_2, \ldots] \hspace{.1cm} \Box_{H\m{\mathbb{F}}_{2\scalestar}} \hspace{.1cm} \Lambda_{H\m{\mathbb{F}}_{2\scalestar}}(z_1, z_2, \ldots),
$$
where $|b_i| = (0, i\rho)$ and $|z_i|= (1,i\rho).$  Note that an element of $RO(C_2)$ has the form $a + b\sigma$, where $\sigma$ denotes the sign representation. Thus we can view the $RO(C_2)$-graded equivariant B\"okstedt spectral sequence for $MU_{\mathbb{R}}$ as a trigraded spectral sequence with integer gradings, and differentials:  
$$
d^r: E^2_{s, a, b} \to E^2_{s-r, a+r-1,b }.
$$
The $C_2$-ring spectrum $MU_{\mathbb{R}}$ is commutative, so by Proposition \ref{prop:algstructure}, this equivariant B\"okstedt spectral sequence is a spectral sequence of $H\m{\mathbb{F}}_{2\scalestar}$-algebras. We observe that all of the $H\m{\mathbb{F}}_{2\scalestar}$-algebra  generators of the $E^2$-term are in filtration less than or equal to 1. Therefore the differentials on the generators are all zero, and hence the spectral sequence collapses. 
To complete the proof we observe that as $H\m{\mathbb{F}}_{2\scalestar}$-modules, 
\[
H\m{\mathbb{F}}_{2\scalestar}[b_1, b_2, \ldots] \hspace{.1cm} \Box_{H\m{\mathbb{F}}_{2\scalestar}} \hspace{.1cm} \Lambda_{H\m{\mathbb{F}}_{2\scalestar}}(z_1, z_2, \ldots) \cong H\m{\mathbb{F}}_{2\scalestar}[b_1, b_2, \ldots]  \otimes_{\mathbb{F}_{2}}  \Lambda_{\mathbb{F}_{2}}(z_1, z_2, \ldots).
\]
\end{proof}

\end{thm}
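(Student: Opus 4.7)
The plan is to compute $\m{H}^{C_2}_{\scalestar}(i^*_{C_2}\THH_{C_2}(MU_{\mathbb{R}}); \m{\mathbb{F}}_2)$ by running the equivariant B\"okstedt spectral sequence of Theorem \ref{thm:ss} with $E = H\m{\mathbb{F}}_2$ and $R = MU_{\mathbb{R}}$. First, I would verify that the hypotheses are satisfied: the generator $g$ of $C_2$ acts trivially on $\m{\pi}_\scalestar(H\m{\mathbb{F}}_2)$ (known from the Hu--Kriz computation referenced in \cite{HK}), and the flatness hypothesis $\m{E}_\scalestar(R)$ flat over $\m{E}_\scalestar$ follows from Proposition \ref{prop:MUR}, where in fact it is free. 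This gives a strongly convergent spectral sequence
$$E^2_{s,\scalestar} = \m{\HH}^{H\m{\mathbb{F}}_{2\scalestar}, C_2}_s\bigl(\m{H}^{C_2}_{\scalestar}(MU_{\mathbb{R}}; \m{\mathbb{F}}_2)\bigr) \Rightarrow \m{H}^{C_2}_{s+\scalestar}(i^*_{C_2}\THH_{C_2}(MU_{\mathbb{R}}); \m{\mathbb{F}}_2).$$

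Next, I would identify the $E^2$-term using Proposition \ref{prop:E2}, yielding
$$E^2_{*,\scalestar} \cong H\m{\mathbb{F}}_{2\scalestar}[b_1, b_2, \ldots] \,\Box_{H\m{\mathbb{F}}_{2\scalestar}}\, \Lambda_{H\m{\mathbb{F}}_{2\scalestar}}(z_1, z_2, \ldots),$$
with $|b_i| = (0, i\rho)$ and $|z_i| = (1, i\rho)$. Writing elements of $RO(C_2)$ as $a + b\sigma$, I would regard this as a trigraded spectral sequence in variables $(s, a, b)$, with differentials
$$d^r \colon E^r_{s,a,b} \to E^r_{s-r, a+r-1, b}.$$

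Now I would invoke the multiplicative structure from Proposition \ref{prop:algstructure}: since $MU_{\mathbb{R}}$ is a genuine commutative $C_2$-ring spectrum, the B\"okstedt spectral sequence is a spectral sequence of $RO(C_2)$-graded $\m E_\scalestar$-algebras. The crucial observation is that all $H\m{\mathbb{F}}_{2\scalestar}$-algebra generators of $E^2$ live in simplicial filtration $s \leq 1$: the $b_i$ sit in $s = 0$, so trivially have no targets, while the $z_i$ sit in $s = 1$, and any differential $d^r$ for $r \geq 2$ lands in negative $s$-filtration, hence is zero. By the Leibniz rule, all differentials on the whole $E^2$-page vanish, so the spectral sequence collapses at $E^2$.

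The main subtlety I anticipate is the passage from the collapsed $E^\infty$-page to the actual abutment: one must check that there are no nontrivial extension problems in the filtration, which is why the conclusion is stated only as an isomorphism of $H\m{\mathbb{F}}_{2\scalestar}$-modules rather than of algebras. Since the $z_i$ all have odd simplicial degree and the filtration is bounded below, the associated graded $H\m{\mathbb{F}}_{2\scalestar}$-module structure determines the target up to isomorphism, giving the claimed formula with $|b_i| = i\rho$ and $|z_i| = 1 + i\rho$ (the shift by $s$ accounting for the $\Sigma^s$ in the simplicial filtration quotients). This will complete the proof.
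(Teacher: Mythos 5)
Your proposal is correct and follows essentially the same route as the paper: verify the hypotheses of the equivariant B\"okstedt spectral sequence, identify the $E^2$-page via Proposition \ref{prop:E2}, invoke the algebra structure from Proposition \ref{prop:algstructure}, and conclude collapse at $E^2$ because all multiplicative generators lie in filtration $\leq 1$. You additionally address the passage from $E^\infty$ to the abutment, which the paper leaves implicit; the justification is really that $E^\infty$ is free as an $H\m{\mathbb{F}}_{2\scalestar}$-module, so the filtration splits as modules, rather than the parity of simplicial degree you cite.
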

Since $H\m{\mathbb{F}}_{2\scalestar}$ was computed in Proposition 6.2 of \cite{HK}, Theorem \ref{thm:MUR} provides an explicit description of the homology of THH$_{C_2}(MU_{\mathbb{R}})$.  
  
\section{Twisted topological Hochschild homology of Thom spectra}\label{sec:thomspectra}

Given any map $f: X \to BO$, one can construct its Thom spectrum, whose $n^{th}$ space is the Thom space of  the restriction of $f$ to  $f^{-1}(BO(n))$. More generally, one can define the Thom spectrum $Th(f)$ of a map $f: X \to Pic(R)$, where $R$ is a commutative ring spectrum. A point-set model for $Th(f)$ was described in Chapter IX of \cite{LMS}, and an infinity-categorical model in \cite{ABGHR}. For example, if $f$ is nullhomotopic, $Th(f) \simeq R \wedge X_+$. If $f$ is an $E_n$-map between $E_n$-spaces, then $Th(f)$ is an $E_n$-ring spectrum.

Mahowald showed that $H\mathbb{F}_2$ is the Thom spectrum of a 2-fold loop map $\Omega^2 S^3 \to BO$; a similar description holds for the other Eilenberg--MacLane spectra $H\mathbb{F}_p$, and for $H\mathbb{Z}_{(p)}$. In \cite{BCS},  Blumberg, Cohen, and Schlichtkrull studied the symmetric monoidal properties of the Thom spectrum functor, which they applied to give a simple description of the topological Hochschild homology of these Eilenberg--MacLane spectra.

In this section, we exploit the $G$-symmetric monoidal properties of the equivariant Thom spectrum functor \cite{HKZ}, along with the description of $H \underline{\mathbb{F}}_2$ and $H\underline{ \mathbb{Z}}_{(2)}$ as equivariant Thom spectra in \cite{BW} and \cite{HW}, to compute $\THH_{C_2}$ of these $C_2$-ring spectra.

The first step is to give a ``conjugation action" description, as in \cite{Klang}, of the topological Hochschild homology of equivariant Thom spectra. The idea for this conjugation action comes from the description of the Hochschild homology of a group ring $k[\Gamma]$ as group homology, $H_*(\Gamma; k[\Gamma]^{ad})$, where $k[\Gamma]^{ad}$ indicates that $\Gamma$ acts by conjugation. This description follows from the isomorphism
$$\HH_*(k[\Gamma]) \cong Tor_* ^{k[\Gamma] \otimes k[\Gamma]^{op}} (k[\Gamma], k[\Gamma])$$
by performing a change of rings along the map $k[\Gamma] \to k[\Gamma] \otimes k[\Gamma]^{op}$ sending $\gamma \in \Gamma$ to $\gamma \otimes \gamma^{-1}$. Similarly, one can describe $\THH(\Sigma^\infty _+ \Omega X)$ as the left-derived smash product $B((\Sigma^\infty _+ \Omega X)^{ad}, \Sigma^\infty _+ \Omega X, \mathbb{S})$ by performing a change of rings along the map 
$$\Sigma^\infty _+ \Omega X \to (\Sigma^\infty _+ \Omega X) \wedge (\Sigma^\infty _+ \Omega X)^{op}$$
sending a loop $\gamma \in \Omega X$ to $(\gamma, \gamma^{-1}) \in \Omega X \times (\Omega X)^{op}$. Note that this is the map inducing the conjugation action of $\pi_1(X)$ on itself. In \cite{Klang}, this ``conjugation action" description was generalized to encompass $\THH$ of Thom spectra $Th(f: \Omega X \to BGL_1(R))$, rather than just suspension spectra. Our first goal is to establish a similar description of $\THH$ (and twisted $\THH$) of equivariant Thom spectra.

\subsection{Description of $\THH$.}

\subsubsection*{Construction.} Let $X$ be a pointed G-space, and let $A$ be the Thom spectrum $Th(\Omega f)$ of a loop map $\Omega f: \Omega X \to BO_G$ (or $Pic(R)$ for $R$ a commutative $G$-ring spectrum). We construct an action of $\Sigma^\infty_+ \Omega X$ on $A$ induced by the conjugation action of $\Omega X$ on itself. More explicitly, this action arises from a map of $G$-ring spectra $\Delta_*: \Sigma^\infty _+ \Omega X \to A \wedge A^{op}$, induced on Thom spectra by the following commutative diagram.
$$\xymatrix{
\Omega X \ar[r]^-{\Delta} \ar[rd] & \Omega X \times (\Omega X)^{op} \ar[d]^-{(mult) \circ (\Omega f \times (\Omega f)^{op})} \\
& BO_G
}$$

The map $\Delta$ sends $\gamma \in \Omega X$ to $(\gamma, \gamma^{-1}) \in \Omega X \times (\Omega X)^{op}$. As proven in \cite{HKZ}, $A \wedge A^{op}$ is the Thom spectrum of the vertical map. Its composite with the map $\Delta$ is null, because the concatenation of a loop with its inverse is trivial. Since the Thom spectrum of a null map is the suspension spectrum of the base space, $\Delta$ induces a map of G-ring spectra $\Sigma^\infty _+ \Omega X \to A \wedge A^{op}$. The usual left action of $A \wedge A^{op}$ on $A$ then pulls back to a left action of $\Sigma^\infty _+ \Omega X $ on $A$.

\begin{thm}\label{thm-conj}
Under the action defined above of $\Sigma^\infty_+ \Omega X$ on $A$, there is an equivalence of $G$-spectra
$$\THH(A) \simeq B(A,\Sigma^\infty_+ \Om X, \mathbb{S}).$$
Additionally, if $G = C_n$ with generator $g$,
$$\THH_{C_n}(A) \simeq B(A^g,\Sigma^\infty_+ \Om X, \mathbb{S}).$$
\end{thm}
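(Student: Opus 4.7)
The plan is to prove the first equivalence by showing that the cyclic bar construction $B^{cy}_\bullet(A)$, whose realization is $\THH(A)$, is equivalent as a simplicial $G$-spectrum to the two-sided bar construction $B_\bullet(A, \Sigma^\infty_+\Omega X, \mathbb{S})$, where $A$ carries the left $\Sigma^\infty_+\Omega X$-module structure induced by $\Delta_*$. The key ingredient is the $G$-symmetric monoidality of the equivariant Thom spectrum functor from \cite{HKZ}, which identifies $B^{cy}_n(A) = A^{\wedge(n+1)}$ with the Thom spectrum of the composite $(\Omega X)^{\times(n+1)} \xrightarrow{(\Omega f)^{\times(n+1)}} BO_G^{\times(n+1)} \xrightarrow{\mu} BO_G$, where $\mu$ is iterated multiplication.

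The next step is to introduce a $G$-equivariant shear homeomorphism
$$
\sigma_n\colon (\Omega X)^{\times(n+1)} \xrightarrow{\cong} (\Omega X)^{\times(n+1)}
$$
that absorbs all $n+1$ loops into a single ``first'' coordinate, for instance $(\gamma_0, \ldots, \gamma_n) \mapsto (\gamma_0\gamma_1\cdots\gamma_n, \gamma_n^{-1}, \ldots, \gamma_1^{-1})$. Since $\Omega f$ is a loop map, the composite $\mu \circ (\Omega f)^{\times(n+1)} \circ \sigma_n^{-1}$ telescopes to a single copy of $\Omega f$ on the first coordinate, yielding a $G$-equivalence $B^{cy}_n(A) \simeq A \wedge \Sigma^\infty_+(\Omega X)^{\times n} = B_n(A, \Sigma^\infty_+ \Omega X, \mathbb{S})$.

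The third step is to verify that under $\sigma_\bullet$ the face and degeneracy maps of $B^{cy}_\bullet(A)$ are compatible (possibly up to the standard reversal of simplicial structure, which preserves geometric realization) with those of $B_\bullet(A, \Sigma^\infty_+ \Omega X, \mathbb{S})$. Inner face maps $d_i^{cy}$ (multiplication of adjacent $A$-factors) translate to inner multiplication of adjacent $\Omega X$-factors; the outer face $d_0^{cy}$ translates via the telescoping to the face map induced by the augmentation $\Sigma^\infty_+\Omega X \to \mathbb{S}$; and the cyclic face $d_n^{cy}$ translates to the right $\Sigma^\infty_+ \Omega X$-action on $A$ coming from $\Delta_*$, which is precisely the conjugation action from the construction. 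Degeneracies correspond under the evident identifications.

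For the $C_n$-equivariant statement, the same argument applies to the twisted cyclic bar construction $B^{cy,C_n}_\bullet(A)$: the twist by the generator $g \in C_n$ in the face map $d_n^{cy,C_n}$ translates, via $\sigma_\bullet$, into replacing $A$ with its $g$-twisted module ${}^g A = A^g$ on the left of the two-sided bar construction. The principal technical obstacle throughout is the careful bookkeeping required for the outer face maps: one must explicitly invoke the loop-map property of $\Omega f$ to justify the telescoping identity, together with the definition of $\Delta_*$ as $\gamma \mapsto (\gamma, \gamma^{-1})$, to ensure that the conjugation action (and its $g$-twist) matches precisely the structure arising from the cyclic bar construction.
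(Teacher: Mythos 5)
Your approach is genuinely different from the paper's. The paper (following Klang) writes $\THH(A) \simeq B(A, A\wedge A^{op}, A)$ and $\THH_{C_n}(A) \simeq B(A^g, A\wedge A^{op}, A)$, reduces by a change of rings along $\Delta_*$ to proving the single $A$-bimodule equivalence $A \simeq B(A\wedge A^{op}, \Sigma^\infty_+\Omega X, \mathbb{S})$, and obtains that from the $\Omega X \times (\Omega X)^{op}$-equivariant space-level equivalence $B(\Omega X \times (\Omega X)^{op}, \Omega X, *) \simeq \Omega X$ over $BO_G$, using the $G$-symmetric monoidality and colimit-preservation of the Thom functor from \cite{HKZ}. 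You instead attempt a direct levelwise simplicial identification $B^{cy,C_n}_\bullet(A) \cong B_\bullet(A^g, \Sigma^\infty_+\Omega X, \mathbb{S})$ via the shear on $(\Omega X)^{\times(n+1)}$ together with the same Thom-functor properties. Both routes exploit the same underlying geometric fact (that $\Omega f$ is a loop map, so $\mu\circ(\Omega f)^{\times(n+1)}$ telescopes), but your argument is closer in style to BCS while the paper's is closer to Klang's change-of-rings approach. What the paper's route buys is that all the simplicial bookkeeping is absorbed into the single, easy-to-verify space-level equivalence, after which the rest is formal. Your route is conceptually more hands-on but requires more careful verification of simplicial identities, which you acknowledge but do not carry out.

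The one place I would insist you be more careful is Step 3, on the twist. With your shear $\sigma_n(\gamma_0,\dots,\gamma_n) = (\gamma_0\cdots\gamma_n,\gamma_n^{-1},\dots,\gamma_1^{-1})$ and the reversal $d^{cy}_i \leftrightarrow d^{bar}_{n-i}$, the cyclic face $d^{cy}_n$ does land on $d^{bar}_0$, i.e.\ the module action, as you claim. But if you compute what $d^{cy,C_n}_n$ actually does under the shear, you find it sends $x = \gamma_0\cdots\gamma_n$ to $(g\gamma_n)\,x\,\gamma_n^{-1}$, which in terms of $r_1 = \gamma_n^{-1}$ is $(gr_1)^{-1}\,x\,r_1$. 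This is \emph{not} the naive $g$-twist $\lambda\circ(g\wedge\Id)$ of the conjugation $\Sigma^\infty_+\Omega X$-action, which would produce $(gr_1)^{-1}\,x\,(gr_1)$. Rather, it is the pullback along $\Delta_*$ of the $A$-bimodule structure on $A$ twisted by $g$ on one side only; this is exactly what the paper means by $A^g$ (it appears first as the bimodule coefficient in $B(A^g, A\wedge A^{op}, A)$). So the substance of your claim is right, but the phrase ``its $g$-twisted module ${}^gA = A^g$'' conflates two different twists, and the difference is not cosmetic: only the one-sided bimodule twist makes the diagram commute. You should either make the meaning of $A^g$ explicit, or observe (as the paper does implicitly) that the twist should be tracked at the level of the $A\wedge A^{op}$-module structure before pulling back along $\Delta_*$, where it is unambiguously a one-sided twist.

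Secondarily, the reversal $d^{cy}_i \leftrightarrow d^{bar}_{n-i}$ that you invoke to make the face maps match is correct, and the geometric realizations of a simplicial object and its reversal agree, but you should verify that this reversal is compatible with the ambient $G$-equivariance of the whole construction (it is, since the shear and all the maps involved are $G$-equivariant, but this should be stated). With those two points spelled out, your proof would be complete and would serve as a valid alternative to the one in the paper.
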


\begin{proof} We mimic the proof of Theorem 5.7 of \cite{Klang}.
Since $\THH(A) \simeq B(A, A \wedge A^{op}, A)$ and $\THH_{C_n}(A) \simeq B(A^g, A \wedge A^{op}, A)$, by a change of rings, it suffices to show that as an $A$-bimodule,
$$A \simeq B( A \wedge A^{op}, \Sigma^\infty _+ \Om X, \mathbb{S}).$$

The right action of $\Sigma^\infty _+ \Om X$ on $A \wedge A^{op}$ comes from the map of ring spectra, $\Delta_*: \Sigma^\infty _+ \Om X \to A \wedge A^{op}$. Alternatively, $\Sigma^\infty _+ \Om X$ is a Thom spectrum over $\Om X$, and $A \wedge A^{op}$ is a Thom spectrum over $\Om X \times (\Om X)^{op}$; we can describe the action on these spaces. 

Let $\Omega X$ act on $\Om X \times (\Om X)^{op}$ on the right as follows: $\gamma \in \Omega X $ takes $(\alpha, \beta)$ to $(\alpha \gamma,\gamma^{-1}  \beta)$. There is a homotopy fiber sequence
$$\xymatrix{
\Omega X \ar[r]^-\Delta & \Om X \times (\Om X)^{op} \ar[r]^-{mult} & \Om X \ar[r] & X,
}$$
where the last map evaluates at the midpoint of the interval. It follows that  $mult: \Om X \times (\Om X)^{op} \to \Om X$ descends to an equivalence of $\Omega X$ with the homotopy orbits of the $\Om X$-action above on $\Om X \times (\Om X)^{op}$. More explicitly, we can realize this as an equivalence of $G$-spaces endowed with an $\Om X \times (\Om X)^{op}$-action
$$\xymatrix{
B(\Om X \times (\Om X)^{op},\Om X, * ) \ar[r]^-\sim & \Om X,
}$$
which sends a $p$-simplex $((\alpha,\beta), (\gamma_1,...,\gamma_p),*)$ to $\alpha (\gamma_1 ...\gamma_p)(\gamma_p)^{-1} ... (\gamma_1)^{-1} \beta$. (One can use a strictly associative model for $\Om X$, such as Moore loops or the Kan loop group.) The loop space $\Om X \times (\Om X)^{op}$ acts on the left on $B(\Om X \times (\Om X)^{op},\Om X, * )$ by multiplication on $\Om X \times (\Om X)^{op}$ and on the target $\Om X$ by conjugation. 

Since the diagram
$$\xymatrix{
B(\Om X \times (\Om X)^{op},\Om X, * ) \ar[r]^-\sim \ar[d]^-{B(\Om f \times (\Om f)^{op}, *, *)} & \Om X \ar[d]^{\Om f} \\
B(BO_G \times (BO_G)^{op},BO_G, * ) \ar[r]^-\sim & BO_G
}$$
commutes, the Thom spectrum of the top-to-right  composite is equivalent to the Thom spectrum of the left-to-bottom composite. The Thom spectrum functor respects equivalences of $G$-spaces over $BO_G$, respects colimits, and is G-symmetric monoidal (see \cite{HKZ}), so the Thom spectrum of the top-to-right  composite is $A$ and that of the left-to-bottom composite is $B(A \wedge A^{op}, \Sigma^\infty _+ \Om X, \mathbb{S})$.  We obtain
$$B(A \wedge A^{op}, \Sigma^\infty _+ \Om X, \mathbb{S}) \simeq A,$$
as required.
\end{proof}

\subsection{Computation.}

\begin{lem}\label{lem-THH}
As $C_2$-spectra,
$$\THH_{C_2}(H\underline{\mathbb{F}}_2) \simeq \THH(H\underline{\mathbb{F}}_2)$$
and
$$\THH_{C_2}(H\underline{\mathbb{Z}}_{(2)}) \simeq \THH(H\underline{\mathbb{Z}}_{(2)})$$

\end{lem}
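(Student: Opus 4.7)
The plan is to combine the equivariant Thom spectrum presentations of $H\underline{\mathbb{F}}_2$ and $H\underline{\mathbb{Z}}_{(2)}$, due respectively to Behrens--Wilson \cite{BW} and Hahn--Wilson \cite{HW}, with the conjugation-action descriptions of $\THH$ and $\THH_{C_2}$ provided by Theorem \ref{thm-conj}. Writing $A$ for either $H\underline{\mathbb{F}}_2$ or $H\underline{\mathbb{Z}}_{(2)}$, these results present $A$ as an equivariant Thom spectrum $Th(\Omega f)$ of some $C_2$-equivariant loop map $\Omega f : \Omega X \to Pic(R)$ for a suitable commutative $C_2$-ring spectrum $R$. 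Theorem \ref{thm-conj} then supplies equivalences of $C_2$-spectra
\begin{equation*}
\THH(A) \simeq B(A, \Sigma^\infty_+ \Omega X, \mathbb{S}) \qquad \text{and} \qquad \THH_{C_2}(A) \simeq B(A^g, \Sigma^\infty_+ \Omega X, \mathbb{S}),
\end{equation*}
where $g$ denotes the generator of $C_2$. The lemma therefore reduces to exhibiting an equivalence $A^g \simeq A$ of left $\Sigma^\infty_+ \Omega X$-modules, where the action on both sides is the conjugation action $\Delta_*$ of Theorem \ref{thm-conj}.

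To produce this equivalence, I would unwind the definition of the twist: $A^g$ has the same underlying $C_2$-spectrum as $A$, and its left module structure differs from that of $A$ only by precomposition with $g \wedge 1 : A \wedge A \to A \wedge A$ in the map $\Delta_*$. The action of $g$ on $A$ is itself inherited, via the Thom spectrum functor, from the $C_2$-action on the base $X$ in the Behrens--Wilson and Hahn--Wilson presentations. Because both $\underline{\mathbb{F}}_2$ and $\underline{\mathbb{Z}}_{(2)}$ are constant Mackey functors with trivial Weyl action on the underlying abelian group, one expects the induced $g$-action on $A$ to be inner as a ring automorphism, implemented by an element of the form $\Delta_*(\gamma_0)$ for some loop $\gamma_0 \in \Omega X$. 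Such a $\gamma_0$ would conjugate the twisted face map of the cyclic bar construction into the untwisted one, yielding a levelwise equivalence of simplicial $C_2$-spectra whose geometric realization gives the required equivalence.

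The main obstacle will be making this last identification explicit and genuinely $C_2$-equivariant. Concretely, one must produce the conjugating element $\gamma_0$ (or equivalently, a reparameterization of $B(A^g, \Sigma^\infty_+ \Omega X, \mathbb{S})$ compatible with all face and degeneracy maps and with the $C_2$-structure) by tracing the generator $g$ through the specific loop spaces $\Omega X$ used in \cite{BW} and \cite{HW}, verifying that the resulting inner automorphism on $A$ cancels the twist on the nose or up to coherent simplicial homotopy. Once this step is carried out in each of the two cases, the equivalences $\THH_{C_2}(H\underline{\mathbb{F}}_2) \simeq \THH(H\underline{\mathbb{F}}_2)$ and $\THH_{C_2}(H\underline{\mathbb{Z}}_{(2)}) \simeq \THH(H\underline{\mathbb{Z}}_{(2)})$ follow immediately from Theorem \ref{thm-conj}.
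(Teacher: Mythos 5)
Your proposal and the paper's proof take completely different routes. The paper does not invoke Theorem~\ref{thm-conj} or the Thom spectrum presentations at all. Instead, it uses the explicit dos Santos model \cite{dS} for $H\underline{\mathbb{F}}_2$ (and the analogous model for $H\underline{\mathbb{Z}}_{(2)}$), in which the $V$-th space is $\mathbb{F}_2[S^V]$. After forgetting down to naive $C_2$-spectra --- which is all that the twisted cyclic bar construction sees, since it is built over the trivial universe --- the generator $g$ acts trivially on every level. Hence the twisting map $\alpha_q$ reduces to an untwisted cyclic permutation, $B^{cy,C_2}_\bullet = B^{cy}_\bullet$ on the nose, and the lemma is immediate. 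This is a two-line argument requiring no knowledge of how $A$ is built as a Thom spectrum.

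Your argument, by contrast, reduces the claim to exhibiting an equivalence $A^g \simeq A$ of left $\Sigma^\infty_+\Omega X$-modules, and you propose to produce it by showing the $g$-action on $A$ is inner, implemented by $\Delta_*(\gamma_0)$ for some loop $\gamma_0$. This is where the gap lies, and you flag it yourself. Two specific problems: first, the claim that ``the action of $g$ on $A$ is inherited, via the Thom spectrum functor, from the $C_2$-action on the base $X$'' is not accurate as stated; the left-multiplication map $l_g$ on a genuine $C_2$-Thom spectrum involves the action on the indexing universe and on the bundle fibers, not merely on the base, and tracing it through the Thom functor is a nontrivial computation. Second, even granting that the action is inner, you give no argument that the implementing unit lies in the image of $\Delta_*$, which is required for your conjugation trick to interact correctly with the bar construction $B(-,\Sigma^\infty_+\Omega X,\mathbb{S})$. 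The deeper irony is that the fact you are gesturing at --- that $\underline{\mathbb{F}}_2$ and $\underline{\mathbb{Z}}_{(2)}$ are constant Mackey functors with trivial Weyl action --- directly gives \emph{triviality} of the $g$-action in the dos Santos model, which is stronger than inner-ness and makes the whole detour through $\gamma_0$ unnecessary. If you want to salvage your route, the way to close the gap is to establish triviality of the naive $g$-action first, at which point $\gamma_0$ is the constant loop and your reduction becomes tautological; but then you have rederived the paper's argument inside a much heavier framework.
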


\begin{proof}
We will prove this for $H\underline{\mathbb{F}}_2$; the proof for $H\underline{\mathbb{Z}}_{(2)}$ is identical.

 In \cite{dS}, dos Santos constructs a model for $H\underline{\mathbb{F}}_2$ in which the $V^{th}$ space is given by $\mathbb{F}_2[S^V]$. After forgetting to naive $C_2$-spectra, the action of the generator of $C_2$ is trivial, thus the twisting in the twisted cyclic bar construction is trivial, and the result follows.
\end{proof}

In \cite{BW}, Behrens and Wilson showed that $H\underline{\mathbb{F}}_2$ is the Thom spectrum of a $\rho$-fold loop map $\Omega^\rho S^{\rho +1} \to BO_{C_2}$, and Hahn and Wilson  showed in \cite{HW} that $H\underline{\mathbb{Z}}_{(2)}$ is the Thom spectrum of a $(2\sigma + 1)$-fold loop map $\Omega^{2\sigma} (S^{2\sigma + 1}\langle 2 \sigma + 1 \rangle) \to Pic(\mathbb{S}_{(2)})$. Here $\sigma$ denotes the sign representation of $C_2$, $\rho = 1 + \sigma$ denotes the regular representation, and $\mathbb{S}_{(2)}$ denotes the $C_2$-equivariant 2-local sphere spectrum. Hahn and Wilson proved that $S^{2\sigma + 1}$ is the loop space of $\mathbb{H}P^\infty$, whence $S^{2\sigma + 1}\langle 2 \sigma + 1 \rangle \simeq \Omega (\mathbb{H}P^\infty \langle 2\sigma + 2 \rangle)$. Both $H\underline{\mathbb{F}}_2$ and $H\underline{\mathbb{Z}}_{(2)}$  are thus equivariant Thom spectra of loop maps, so we can use Theorem \ref{thm-conj} to compute the topological Hochschild homology of these $C_2$-ring spectra. By Lemma \ref{lem-THH}, this also computes $\THH_{C_2}$.

\begin{thm}\label{thm-THH-thom}
As $C_2$-spectra,
$$\THH(H\underline{\mathbb{F}}_2) \simeq H\underline{\mathbb{F}}_2 \wedge \Omega^\sigma S^{\rho +1}_+$$
and
$$\THH(H\underline{\mathbb{Z}}_{(2)}) \simeq H\underline{\mathbb{Z}}_{(2)} \wedge \Omega^{2\sigma} (\mathbb{H}P^\infty \langle 2\sigma + 2 \rangle) _+$$
\end{thm}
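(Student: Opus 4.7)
The plan is to combine Theorem \ref{thm-conj} with an equivariant adaptation of the Blumberg--Cohen--Schlichtkrull argument \cite{BCS}. By Lemma \ref{lem-THH}, it suffices to compute ordinary $\THH$ rather than its $C_2$-twisted variant in each case. We first rewrite the base spaces as loop spaces: $\Omega^\rho S^{\rho+1} = \Omega Y$ with $Y = \Omega^\sigma S^{\rho+1}$ (exploiting $\rho = 1 + \sigma$), and $\Omega^{2\sigma+1}(\mathbb{H}P^\infty \langle 2\sigma+2 \rangle) = \Omega Y'$ with $Y' = \Omega^{2\sigma}(\mathbb{H}P^\infty \langle 2\sigma+2 \rangle)$. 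Theorem \ref{thm-conj} then yields
\[
\THH(H\underline{\mathbb{F}}_2) \simeq B(H\underline{\mathbb{F}}_2, \Sigma^\infty_+\Omega Y, \mathbb{S}),
\]
and analogously for $H\underline{\mathbb{Z}}_{(2)}$, where the action comes from the conjugation map $\Delta(\gamma) = (\gamma, \gamma^{-1})$.

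The heart of the argument is to simplify this bar construction. Because the classifying map $\Omega f$ is a $\rho$-fold (hence at least $E_2$) loop map, the composite
\[
\Omega Y \xra{\Delta} \Omega Y \times (\Omega Y)^{op} \xra{\Omega f \times (\Omega f)^{op}} BO_{C_2}
\]
admits a coherent null-homotopy produced by the higher loop structure, which trivializes the conjugation action of $\Sigma^\infty_+\Omega Y$ on $H\underline{\mathbb{F}}_2$ as a module map. Under this trivialization, the bar construction splits off the Thom spectrum factor:
\[
B(H\underline{\mathbb{F}}_2, \Sigma^\infty_+\Omega Y, \mathbb{S}) \simeq H\underline{\mathbb{F}}_2 \wedge B(\mathbb{S}, \Sigma^\infty_+\Omega Y, \mathbb{S}).
\]
Since $Y$ is connected, the right-hand bar construction computes $\Sigma^\infty_+ B\Omega Y \simeq \Sigma^\infty_+ Y = \Omega^\sigma S^{\rho+1}_+$, giving the first asserted equivalence. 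The argument for $H\underline{\mathbb{Z}}_{(2)}$ is identical with $Y'$ in place of $Y$.

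The main technical obstacle I expect is rigorously establishing the trivialization of the conjugation action in the equivariant setting. The essential input is the $C_2$-symmetric monoidal and $C_2$-colimit-preserving structure of the equivariant Thom spectrum functor from \cite{HKZ}, which is the equivariant analog of the symmetric monoidal framework underpinning \cite{BCS}. Combined with the at-least-$E_2$ loop-map structure on $\Omega f$ (resp.\ the classifying map for $H\underline{\mathbb{Z}}_{(2)}$) coming from $\rho$-fold (resp.\ $(2\sigma+1)$-fold) looping, this should supply the coherence needed to carry the BCS-style argument through equivariantly.
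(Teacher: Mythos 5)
Your overall plan—apply Theorem~\ref{thm-conj}, trivialize the conjugation action, then split off the Thom factor and identify $B(\mathbb{S},\Sigma^\infty_+\Omega Y,\mathbb{S})\simeq\Sigma^\infty_+ Y$—matches the paper's, but the trivialization step as you state it has a genuine gap. You claim that the null-homotopy of the composite
\[
\Omega Y \xra{\Delta} \Omega Y\times(\Omega Y)^{op}\xra{\Omega f\times(\Omega f)^{op}} BO_{C_2}
\]
is what ``trivializes the conjugation action.'' However, that composite is null-homotopic for \emph{any} loop map $\Omega f$ (since $f(\gamma)\cdot f(\gamma^{-1})$ is trivial), and this null-homotopy is precisely what is used in the \emph{construction} of the conjugation action: it produces the map $\Delta_*:\Sigma^\infty_+\Omega Y\to A\wedge A^{op}$ through which $\Sigma^\infty_+\Omega Y$ acts on $A$. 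It does not, by itself, say anything about whether that resulting module action on $A$ is trivial. If it did, one would obtain a splitting $\THH(Mf)\simeq Mf\wedge BX_+$ for \emph{every} loop-map Thom spectrum, which is false.

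What the paper actually uses is the commutativity of the Eilenberg--MacLane spectrum (not the $E_2$ loop structure on the classifying map). Because $H\underline{\mathbb{F}}_2$ is a genuine $C_2$-commutative ring, the bimodule action $(A\wedge A^{op})\wedge A\to A$ factors as the multiplication $A\wedge A^{op}\to A$ followed by the left action $A\wedge A\to A$. Hence the conjugation action of $\Sigma^\infty_+\Omega Y$ on $A$ is determined by the composite $\Sigma^\infty_+\Omega Y\xra{\Delta_*}A\wedge A^{op}\xra{\mathrm{mult}}A$, and \emph{this} composite is induced on Thom spectra by the null map of base spaces, so it factors through $\mathbb{S}$ via the augmentation. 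That is the step that trivializes the action. You would need to add this commutativity argument (or a rigorous version of a Klang-style $E_2$ argument, which is a different route) to close the gap; as written, the appeal to ``coherent null-homotopy produced by the higher loop structure'' does not establish triviality of the module action.
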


\begin{proof}
We show this for $H\underline{\mathbb{F}}_2$; the proof for $H\underline{\mathbb{Z}}_{(2)}$ is identical. To compute $\THH$, we use Theorem \ref{thm-conj}. As in Section 5 of \cite{Klang}, we show that the $\Sigma^\infty _+\Omega^\rho S^{\rho +1}$-action on $H\underline{\mathbb{F}}_2$ is trivial, i.e., the map $\Sigma^\infty _+\Omega^\rho S^{\rho +1} \to End(H\underline{\mathbb{F}}_2)$ factors as $\Sigma^\infty _+\Omega^\rho S^{\rho +1} \to \mathbb{S} \to End(H\underline{\mathbb{F}}_2)$, where the first map is the augmentation, and the second is adjoint to $id_{H\underline{\mathbb{F}}_2}$. The desired equivalence follows immediately, as $B(\mathbb{S}, \Sigma^\infty _+ \Omega X, \mathbb{S}) \simeq \Sigma^\infty _+ X$. 

The action of $\Sigma^\infty _+\Omega^\rho S^{\rho +1}$ on $H\underline{\mathbb{F}}_2$ is given by the composite
$$\xymatrix{
\Sigma^\infty _+\Omega^\rho S^{\rho +1} \wedge H\underline{\mathbb{F}}_2 \ar[r]^-{\Delta_* \wedge id} & (H\underline{\mathbb{F}}_2 \wedge H\underline{\mathbb{F}}_2 ^{op}) \wedge H\underline{\mathbb{F}}_2 \ar[r]^-{act} & H\underline{\mathbb{F}}_2.
}$$
Since $H\underline{\mathbb{F}}_2$ is $C_2$-commutative, the action
$$\xymatrix{
(H\underline{\mathbb{F}}_2 \wedge H\underline{\mathbb{F}}_2 ^{op}) \wedge H\underline{\mathbb{F}}_2 \ar[r]^-{act} & H\underline{\mathbb{F}}_2
}$$
is equal to
$$\xymatrix{
(H\underline{\mathbb{F}}_2 \wedge H\underline{\mathbb{F}}_2 ^{op}) \wedge H\underline{\mathbb{F}}_2 \ar[r]^-{mult \wedge id} & H\underline{\mathbb{F}}_2 \wedge H\underline{\mathbb{F}}_2 \ar[r]^-{act} & H\underline{\mathbb{F}}_2.
}$$
Since the composite
$$\xymatrix{
\Sigma^\infty _+\Omega^\rho S^{\rho +1} \ar[r]^-{\Delta_*} & (H\underline{\mathbb{F}}_2 \wedge H\underline{\mathbb{F}}_2 ^{op}) \ar[r]^-{mult} & H\underline{\mathbb{F}}_2 
}$$
is induced on Thom spectra by the null map $\Sigma^\infty _+\Omega^\rho S^{\rho +1} \to \Sigma^\infty _+\Omega^\rho S^{\rho +1}$, the action of $\Sigma^\infty _+\Omega^\rho S^{\rho +1}$ on $H\underline{\mathbb{F}}_2$ is trivial, as required.
\end{proof}

Combining this theorem with Lemma \ref{lem-THH}, we complete the desired computations of $C_2$-relative THH.

\begin{cor}\label{cor-twisted-thom}
As $C_2$-spectra,
$$\THH_{C_2}(H\underline{\mathbb{F}}_2) \simeq H\underline{\mathbb{F}}_2 \wedge \Omega^\sigma S^{\rho +1}_+$$
and
$$\THH_{C_2}(H\underline{\mathbb{Z}}_{(2)}) \simeq H\underline{\mathbb{Z}}_{(2)} \wedge \Omega^{2\sigma} (\mathbb{H}P^\infty \langle 2\sigma + 2 \rangle) _+.$$
\end{cor}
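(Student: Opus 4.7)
The plan is to observe that this corollary is a direct consequence of the two results immediately preceding it, and so the proof amounts to little more than chaining the relevant equivalences of $C_2$-spectra. First, I would invoke Lemma \ref{lem-THH}, which identifies $\THH_{C_2}(H\underline{\mathbb{F}}_2)$ with the ordinary (untwisted) $\THH(H\underline{\mathbb{F}}_2)$ as $C_2$-spectra, and likewise identifies $\THH_{C_2}(H\underline{\mathbb{Z}}_{(2)})$ with $\THH(H\underline{\mathbb{Z}}_{(2)})$. This step is where the work really happens: it reduces the twisted computation to the untwisted one by exploiting that in the dos Santos models for $H\underline{\mathbb{F}}_2$ and $H\underline{\mathbb{Z}}_{(2)}$, the generator of $C_2$ acts trivially on the underlying naive $C_2$-spectrum, making the $C_2$-twisting in the cyclic bar construction trivial.

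Next, I would apply Theorem \ref{thm-THH-thom}, which computes these untwisted $\THH$ spectra as
\[
\THH(H\underline{\mathbb{F}}_2) \simeq H\underline{\mathbb{F}}_2 \wedge \Omega^\sigma S^{\rho +1}_+\quad\text{and}\quad \THH(H\underline{\mathbb{Z}}_{(2)}) \simeq H\underline{\mathbb{Z}}_{(2)} \wedge \Omega^{2\sigma} (\mathbb{H}P^\infty \langle 2\sigma + 2 \rangle)_+,
\]
using the realization of these Eilenberg--MacLane spectra as equivariant Thom spectra of loop maps (via Behrens--Wilson and Hahn--Wilson) combined with Theorem \ref{thm-conj} and the triviality of the induced $\Sigma^\infty_+\Omega^\rho S^{\rho+1}$-action on $H\underline{\mathbb{F}}_2$ (respectively for $H\underline{\mathbb{Z}}_{(2)}$).

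Composing the two zigzags of equivalences from Lemma \ref{lem-THH} and Theorem \ref{thm-THH-thom} yields the claimed equivalences of $C_2$-spectra. There is essentially no obstacle here beyond bookkeeping, since the zigzags are already natural equivalences of $C_2$-spectra. The only point that merits mentioning is that the equivalence in Lemma \ref{lem-THH} is of underlying $C_2$-spectra (obtained after forgetting to naive $C_2$-spectra in the dos Santos model), which matches the level at which Theorem \ref{thm-THH-thom} produces its equivalence, so no further comparison is needed.
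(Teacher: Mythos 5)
Your proposal is correct and follows the paper's own proof exactly: the paper's proof of this corollary is the single sentence ``Combining this theorem with Lemma \ref{lem-THH}, we complete the desired computations of $C_2$-relative THH,'' which is precisely the chaining of Lemma \ref{lem-THH} (twisted $\THH_{C_2}$ agrees with untwisted $\THH$ for these two $C_2$-ring spectra) with Theorem \ref{thm-THH-thom} (computation of the untwisted $\THH$ via the equivariant Thom spectrum description) that you describe.
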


Theorem 4.3 of \cite{Hil17} gives an equivariant version of the James splitting, which allows us to explicitly describe $\THH_{C_2}(H\underline{\mathbb{F}}_2)$.

\begin{cor}\label{cor-splitting}
$\THH_{C_2}(H\underline{\mathbb{F}}_2) \simeq H\underline{\mathbb{F}}_2 \wedge (\bigvee_{k \geq 0} S^{2k\rho} \vee \bigvee_{k \geq 0} S^{2k\rho +2})$.
\end{cor}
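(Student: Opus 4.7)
The plan is to combine the description in Corollary \ref{cor-twisted-thom} with Hill's equivariant James splitting.  That corollary provides a $C_2$-equivalence
\[
\THH_{C_2}(H\underline{\mathbb{F}}_2) \simeq H\underline{\mathbb{F}}_2 \wedge \Omega^\sigma S^{\rho+1}_+,
\]
so it suffices to stably split $\Sigma^\infty_+\Omega^\sigma S^{\rho+1}$ as a wedge of representation spheres and then smash with $H\underline{\mathbb{F}}_2$.

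The first step is to identify $\Omega^\sigma S^{\rho+1}$ as an equivariant James space.  Since $\rho+1=2+\sigma$ in $RO(C_2)$, one has $S^{\rho+1}\cong \Sigma^\sigma S^2$, and therefore $\Omega^\sigma S^{\rho+1}\simeq \Omega^\sigma\Sigma^\sigma S^2$.  I would then invoke Theorem 4.3 of \cite{Hil17} to obtain a $C_2$-equivariant stable splitting
\[
\Sigma^\infty_+\Omega^\sigma \Sigma^\sigma S^2 \;\simeq\; \bigvee_{k\geq 0}\Sigma^\infty (S^2)^{\wedge k},
\]
where the $k$-th wedge summand carries the $C_2$-action induced by the James model:  namely the (trivial) diagonal action on the $k$ copies of $S^2$ combined with the permutation of those factors by order reversal, dictated by the $-1$ action of $C_2$ on the $\sigma$-coordinate.

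The next step is to identify each twisted smash power as a representation sphere.  Decomposing $(\mathbb{R}^2)^k$ by pairing the $i$-th factor with the $(k+1-i)$-th, one sees that each such pair carries the swap action on $\mathbb{R}^2\oplus\mathbb{R}^2$, whose decomposition is $2+2\sigma=2\rho$; when $k$ is odd, a single middle $\mathbb{R}^2$ with trivial action is left over.  Thus $(S^2)^{\wedge 2m}\simeq S^{2m\rho}$ and $(S^2)^{\wedge 2m+1}\simeq S^{2m\rho+2}$, so reindexing over $m\geq 0$ yields the claimed wedge decomposition; smashing with $H\underline{\mathbb{F}}_2$ then gives the result.

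The main obstacle is verifying that Hill's splitting produces precisely this order-reversal $C_2$-action on each filtration quotient.  This should follow from unwinding the equivariant James filtration of $\Omega^V\Sigma^V Y$ in the case $V=\sigma$, $Y=S^2$, where the James relation inherits the reflection structure of the one-dimensional sign representation.  Once that identification is in hand, the computation of each $(\mathbb{R}^2)^k$ with order-reversal as a $C_2$-representation is routine linear algebra.
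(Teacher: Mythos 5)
Your proposal is correct and matches the paper's (implicit) proof: the paper simply cites Hill's equivariant James splitting (Theorem~4.3 of \cite{Hil17}) applied to the description $\THH_{C_2}(H\underline{\mathbb{F}}_2) \simeq H\underline{\mathbb{F}}_2 \wedge \Omega^\sigma S^{\rho+1}_+ = H\underline{\mathbb{F}}_2 \wedge (\Omega^\sigma\Sigma^\sigma S^2)_+$ from Corollary~\ref{cor-twisted-thom}. Your identification of the $k$-th filtration quotient as $(S^2)^{\wedge k}$ with the order-reversal $C_2$-action, and the resulting recognition of $(S^2)^{\wedge 2m} \simeq S^{2m\rho}$ and $(S^2)^{\wedge 2m+1} \simeq S^{2m\rho+2}$, is exactly the bookkeeping the paper leaves to the reader.
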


\bibliographystyle{plain}
\bibliography{bib}

\begin{thebibliography}{10}

\bibitem{AGHKK}
Katharine Adamyk, Teena Gerhardt, Kathryn Hess, Inbar Klang, and Hana~Jia Kong.
\newblock A shadow framework for equivariant {H}ochschild homologies.
\newblock {\em preprint, arXiv:2111.04152}, 2021.

\bibitem{ABGHR}
Matthew Ando, Andrew~J Blumberg, David Gepner, Michael~J Hopkins, and Charles
  Rezk.
\newblock An $\infty$-categorical approach to {R}-line bundles, {R}-module
  {T}hom spectra, and twisted {R}-homology.
\newblock {\em Journal of Topology}, 7(3):869--893, 2013.

\bibitem{AnBlGeHiLaMa}
Vigleik Angeltveit, Andrew~J. Blumberg, Teena Gerhardt, Michael~A. Hill, Tyler
  Lawson, and Michael~A. Mandell.
\newblock Topological cyclic homology via the norm.
\newblock {\em Doc. Math.}, 23:2101--2163, 2018.

\bibitem{Ar79}
Sh\^{o}r\^{o} Araki.
\newblock Orientations in {$\tau $}-cohomology theories.
\newblock {\em Japan. J. Math. (N.S.)}, 5(2):403--430, 1979.

\bibitem{BW}
Mark Behrens and Dylan Wilson.
\newblock A {$C_2$}-equivariant analog of {M}ahowald's {T}hom spectrum theorem.
\newblock {\em Proc. Amer. Math. Soc.}, 146(11):5003--5012, 2018.

\bibitem{BCS}
Andrew~J Blumberg, Ralph~L Cohen, and Christian Schlichtkrull.
\newblock Topological {H}ochschild homology of {T}hom spectra and the free loop
  space.
\newblock {\em Geometry \& Topology}, 14(2):1165--1242, 2010.

\bibitem{BlGeHiLa}
Andrew~J. Blumberg, Teena Gerhardt, Michael~A. Hill, and Tyler Lawson.
\newblock The {W}itt vectors for {G}reen functors.
\newblock {\em J. Algebra}, 537:197--244, 2019.

\bibitem{BHM}
M.~B{\"o}kstedt, W.~C. Hsiang, and I.~Madsen.
\newblock The cyclotomic trace and algebraic ${K}$-theory of spaces.
\newblock {\em Invent. Math.}, 111(3):465--539, 1993.

\bibitem{bokstedt}
Marcel B{\"o}kstedt.
\newblock Topological {H}ochschild homology of $\mathbb{Z}$ and $\mathbb{Z}/p$,
  1985.
\newblock Universit{\"a}t Bielefeld preprint.

\bibitem{BrunDundasStolz}
Morten Brun, Bj{\o}rn Dundas, and Martin Stolz.
\newblock Equivariant structure on smash powers.
\newblock {\em preprint, arXiv:1604.05939}, 2016.

\bibitem{BuFi}
D.~Burghelea and Z.~Fiedorowicz.
\newblock Cyclic homology and algebraic {$K$}-theory of spaces. {II}.
\newblock {\em Topology}, 25(3):303--317, 1986.

\bibitem{CaEi}
Henri Cartan and Samuel Eilenberg.
\newblock {\em Homological algebra}.
\newblock Princeton University Press, Princeton, N. J., 1956.

\bibitem{Con}
Alain Connes.
\newblock Cohomologie cyclique et foncteurs {${\rm Ext}^n$}.
\newblock {\em C. R. Acad. Sci. Paris S\'er. I Math.}, 296(23):953--958, 1983.

\bibitem{dS}
Pedro~F. dos Santos.
\newblock A note on the equivariant {D}old-{T}hom theorem.
\newblock {\em J. Pure Appl. Algebra}, 183(1-3):299--312, 2003.

\bibitem{dSNie}
Pedro~F. dos Santos and Zhaohu Nie.
\newblock Stable equivariant abelianization, its properties, and applications.
\newblock {\em Topology Appl.}, 156(5):979--996, 2009.

\bibitem{ekmm}
A.~D. Elmendorf, I.~Kriz, M.~A. Mandell, and J.~P. May.
\newblock {\em Rings, modules, and algebras in stable homotopy theory},
  volume~47 of {\em Mathematical Surveys and Monographs}.
\newblock American Mathematical Society, Providence, RI, 1997.
\newblock With an appendix by M. Cole.

\bibitem{Fujii}
Michikazu Fujii.
\newblock Cobordism theory with reality.
\newblock {\em Math. J. Okayama Univ.}, 18(2):171--188, 1975/76.

\bibitem{HW}
Jeremy Hahn and Dylan Wilson.
\newblock Eilenberg--{M}ac {L}ane spectra as equivariant {T}hom spectra.
\newblock {\em Geom. Topol.}, 24(6):2709--2748, 2020.

\bibitem{HesselholtMadsen}
Lars Hesselholt and Ib~Madsen.
\newblock On the {$K$}-theory of finite algebras over {W}itt vectors of perfect
  fields.
\newblock {\em Topology}, 36(1):29--101, 1997.

\bibitem{HHR}
M.~A. Hill, M.~J. Hopkins, and D.~C. Ravenel.
\newblock On the nonexistence of elements of {K}ervaire invariant one.
\newblock {\em Ann. of Math. (2)}, 184(1):1--262, 2016.

\bibitem{Hil17}
Michael~A Hill.
\newblock On the algebras over equivariant little disks.
\newblock {\em preprint, arXiv:1709.02005}, 2017.

\bibitem{HillHopkins16}
Michael~A Hill and Michael~J. Hopkins.
\newblock Equivariant symmetric monoidal structures.
\newblock {\em preprint, arXiv:1610.03114}, 2016.

\bibitem{HKZ}
Asaf Horev, Inbar Klang, and Foling Zou.
\newblock Equivariant non-abelian {P}oincar\'e duality and equivariant
  factorization homology of {T}hom spectra.
\newblock {\em preprint, arXiv:2006.13348}, 2020.
\newblock With an appendix by Jeremy Hahn and Dylan Wilson.

\bibitem{HK}
Po~Hu and Igor Kriz.
\newblock Real-oriented homotopy theory and an analogue of the
  {A}dams-{N}ovikov spectral sequence.
\newblock {\em Topology}, 40(2):317--399, 2001.

\bibitem{Jon}
John D.~S. Jones.
\newblock Cyclic homology and equivariant homology.
\newblock {\em Invent. Math.}, 87(2):403--423, 1987.

\bibitem{Klang}
Inbar Klang.
\newblock The factorization theory of {T}hom spectra and twisted nonabelian
  {P}oincar\'{e} duality.
\newblock {\em Algebr. Geom. Topol.}, 18(5):2541--2592, 2018.

\bibitem{Landweber}
Peter~S. Landweber.
\newblock Conjugations on complex manifolds and equivariant homotopy of {$MU$}.
\newblock {\em Bull. Amer. Math. Soc.}, 74:271--274, 1968.

\bibitem{LMS}
L.~G. Lewis, Jr., J.~P. May, M.~Steinberger, and J.~E. McClure.
\newblock {\em Equivariant stable homotopy theory}, volume 1213 of {\em Lecture
  Notes in Mathematics}.
\newblock Springer-Verlag, Berlin, 1986.
\newblock With contributions by J. E. McClure.

\bibitem{Lewis}
L.~Gaunce Lewis, Jr.
\newblock When projective does not imply flat, and other homological anomalies.
\newblock {\em Theory Appl. Categ.}, 5:No. 9, 202--250, 1999.

\bibitem{LeMa}
L.~Gaunce Lewis, Jr. and Michael~A. Mandell.
\newblock Equivariant universal coefficient and {K}\"unneth spectral sequences.
\newblock {\em Proc. London Math. Soc. (3)}, 92(2):505--544, 2006.

\bibitem{PMay}
J.~P. May.
\newblock {\em Equivariant homotopy and cohomology theory}, volume~91 of {\em
  CBMS Regional Conference Series in Mathematics}.
\newblock Published for the Conference Board of the Mathematical Sciences,
  Washington, DC; by the American Mathematical Society, Providence, RI, 1996.
\newblock With contributions by M. Cole, G. Comeza\~{n}a, S. Costenoble, A. D.
  Elmendorf, J. P. C. Greenlees, L. G. Lewis, Jr., R. J. Piacenza, G.
  Triantafillou, and S. Waner.

\bibitem{MThesis}
Kristen Mazur.
\newblock {\em On the structure of Mackey functors and Tambara functors}.
\newblock PhD thesis, University of Virginia, 2013.

\bibitem{MSV}
J.~McClure, R.~Schw{\"a}nzl, and R.~Vogt.
\newblock ${T}{H}{H}({R})\cong {R}\otimes {S}\sp 1$ for ${E}\sb \infty$ ring
  spectra.
\newblock {\em J. Pure Appl. Algebra}, 121(2):137--159, 1997.

\bibitem{NikolausScholze}
Thomas Nikolaus and Peter Scholze.
\newblock On topological cyclic homology.
\newblock {\em Acta Math.}, 221(2):203--409, 2018.

\bibitem{schwede}
Stefan Schwede.
\newblock {\em Global homotopy theory}, volume~34 of {\em New Mathematical
  Monographs}.
\newblock Cambridge University Press, Cambridge, 2018.

\bibitem{shulman2014equivariant}
Megan~Guichard Shulman.
\newblock Equivariant local coefficients and the {$RO(G)$}-graded cohomology of
  classifying spaces.
\newblock {\em preprint, arXiv:1405.1770}, 2014.

\end{thebibliography}
\end{document}